\numberwithin{equation}{section}
\pgfplotsset{compat=newest} 
\pgfplotsset{plot coordinates/math parser=false}
\renewcommand{\abstractname}{} \addto\captionsenglish{\renewcommand{\abstractname}{}}
\DeclareMathOperator{\tr}{tr}
\theoremstyle{definition}
\newtheorem{defn}{Definition}[section]
\newtheorem{prop}{Proposition}[section]
\newtheorem{lem}{Lemma}[section]
\newtheorem{thm}{Theorem}[section]
\newtheorem{cor}{Corollary}[section]
\newtheorem{rem}{Remark}[section]
\newtheorem{assu}{Assumption}[section]
\newtheorem{clawithinpf}{Claim}
\newcommand{\thickhline}{%
	\noalign {\ifnum 0=`}\fi \hrule height 1pt
	\futurelet \reserved@a \@xhline
}
\newcommand{\suchthat}{\mathop{\mathrm{s.t.}}}
\newcommand\independent{\protect\mathpalette{\protect\independenT}{\perp}}
\def\independenT#1#2{\mathrel{\rlap{$#1#2$}\mkern2mu{#1#2}}}
\newcommand{\bb}[1]{\mathbb{#1}} 
\newcommand{\bbM}[1]{\mathds{#1}} 
\newcommand{\cc}[1]{\mathcal{#1}} 
\newcommand{\zero}{\boldsymbol{0}}
\newcommand{\TimeDepMatrix}[2]{\boldsymbol{#1}^{\paren{#2}}}
\newcommand{\FA}{\mathop{\mathrm{FA}}}
\newcommand{\vect}{\mathop{\mathrm{vec}}}
\newcommand{\MD}{\mathop{\mathrm{MD}}}
\newcommand{\diag}{\mathop{\mathrm{diag}}}
\newcommand{\supp}{\mathop{\mathrm{supp}}}
\newcommand{\cov}{\mathop{\mathrm{cov}}} 
\newcommand{\var}{\mathop{\mathrm{var}}}
\newcommand{\brac}[1]{\left[#1\right]}
\newcommand{\set}[1]{\left\{#1\right\}}
\newcommand{\abs}[1]{\left\lvert #1 \right\rvert}
\newcommand{\paren}[1]{\left(#1\right)}
\newcommand{\parbra}[1]{\left(#1\right]}
\newcommand{\brapar}[1]{\left[#1\right)}
\newcommand{\Bigparen}[1]{\Big(#1\Big)}
\newcommand{\Bigbrac}[1]{\Big[#1\Big]}
\newcommand{\InnerProd}[2]{\langle #1,#2 \rangle}
\newcommand{\cp}[1]{\overset{#1}{\rightarrow}}    
\newcommand{\eqd}{\overset{d}{=}}  
\newcommand{\RelNum}[2]{\overset{#1}{#2}}
\newcommand{\LpNorm}[2]{\left\| #1\right\|_{\ell_{#2}}}
\newcommand{\OpNorm}[3]{\left\| #1\right\|_{#2\rightarrow#3}}
\newcommand{\RNum}[1]{\uppercase\expandafter{\romannumeral #1\relax}}
\begin{document}

\begin{frontmatter}
		
\title{Online detection of local abrupt changes in high-dimensional Gaussian graphical models}
\runtitle{Online detection of local abrupt changes in high-dimensional GGMs}
	
\begin{aug}
\author[A]{\fnms{Hossein} \snm{Keshavarz}\ead[label=e1]{hossein.keshavarzshenastaghi@gm.com}}
\and
\author[B]{\fnms{George} \snm{Michailidis}\ead[label=e2]{gmichail@ufl.edu}}

\address[A]{Advanced Analytics Research CoE, General Motors, \printead{e1}}
\address[B]{Department of Statistics \& UF Informatics Institute, University of Florida, \printead{e2}}

\end{aug}

\begin{abstract}
\renewcommand{\abstractname}{}  
The problem of identifying change points in high-dimensional Gaussian graphical models (GGMs) in an \emph{online} fashion is of interest, due to new applications in biology, economics and social sciences. The offline version of the problem, where all the data are a priori available, has led to a number of methods and associated algorithms involving regularized loss functions. However, for the online version, there is currently only a single work in the literature that develops a sequential testing procedure and also studies its asymptotic false alarm probability and power. The latter test is best suited for the detection of change points driven by global changes in the structure of the precision matrix of the GGM, in the sense that many edges are involved. Nevertheless, in many practical settings the change point is driven by local changes, in the sense that only a small number of edges exhibit changes. To that end, we develop a novel test to address this problem that is based on the $\ell_\infty$ norm of the normalized covariance matrix of an appropriately selected portion of incoming data. The study of the asymptotic distribution of the proposed test statistic under the null (no presence of a change point) and the alternative (presence of a change point) hypotheses requires new technical tools that examine maxima of graph-dependent Gaussian random variables, and that of independent interest. It is further shown that these tools lead to the imposition of mild regularity conditions for key model parameters, instead of more stringent ones required by leveraging previously used tools in related problems in the literature. Numerical work on synthetic data illustrates the good performance of the proposed detection procedure both in terms of computational and statistical efficiency across numerous experimental settings.
\end{abstract}

\begin{keyword}[class=MSC]
\kwd[Primary ]{62H15, 60G15}
\kwd[; secondary ]{62L10, 60G70}
\end{keyword}

\begin{keyword}
\kwd{Online change point detection}
\kwd{Sparse Gaussian graphical models}
\kwd{Hypothesis testing}
\kwd{Precision matrix}
\kwd{Extreme value analysis}
\kwd{Non-asymptotic analysis}
\end{keyword}

\end{frontmatter}

\section{Introduction}\label{Intro}

Learning the dependence between variables in high-dimensional data represents an important learning task in macro-econometrics \cite{stock2006forecasting, stock2008evolution}, in gaining insights into regulatory mechanisms in biology  \cite{michailidis2013autoregressive}, and optimizing large-scale portfolio allocations in finance \cite{garleanu2013dynamic}. The conditional dependence between components in multivariate observations can be modeled using graphical models \cite{wainwright2008graphical}. However, the presence of more variables than available observations (the so-called high-dimensional scaling regime) led to the study of estimating such models under the \emph{sparsity} assumption, namely that most pairs of variables are conditionally independent given the remaining ones. For multivariate Gaussian observations, this assumption translates to sparsity in the inverse covariance (precision) matrix.  As a result, a rich body of literature including fast, scalable algorithms together with their theoretical guarantees has been developed for estimating sparse precision matrices from independent and identically distributed (i.i.d.) observations (see \cite{wainwright2019high} and references therein).

In applications, where the graphical models are estimated based on time course data, the  stationarity assumption may be too stringent. For example, there is strong evidence for changing conditional dependence patterns amongst brain regions \cite{hutchison2013dynamic, hindriks2016can}, or the stock returns of financial firms \cite{lin2017regularized}. A simple, yet useful in many applied settings, departure is that of piecewise stationarity that in turn implies that the conditional dependence structure of the data remains constant between consecutive break points that define the stationary segments. In this case, the estimation problem gets more involved, since one needs to both identify the break/change points, as well as estimate the parameters of the underlying graphical models.

There are two streams of change point detection problems in the literature: the \emph{offline} one and \emph{sequential (online)} one. In the first stream, the data under consideration are available and the analytical task is to identify change points (detection) and estimate the parameters of the model employed, so that insights of what led to the occurrence of a change point are obtained (diagnosis). In the online framework, data are acquired in a sequential manner and one is interested in detecting a change point with small delay. Both versions of the problem have been extensively studied in the literature for various univariate and multivariate statistical models (see, e.g., \cite{basseville1993detection, horvath2014extensions, keshavarz2017optimal} and references therein). However, the literature on change point detection for high-dimensional graphical models is significantly sparser and related work is fairly recent. Next, we provide a brief review of the literature for both versions of the problem.

\begin{enumerate}[label = (\alph*),leftmargin=*]
\item (Offline methods) A number of methods re-parameterize the piecewise Gaussian Graphical Model (GGM), and introduce new parameters that correspond to differences of the model parameters at every point in time, which are subsequently regularized based on a fused lasso penalty \cite{kolar2010estimating,kolar2012estimating,gibberd2017multiple}. Hence, the non-zero set of parameters corresponds to candidate change points. An analogous strategy is adopted in Safikhani et al. \cite{safikhani2017joint} for high dimensional vector autoregressive models.
Roy et al. \cite{roy2016change} studied detection of a single change point in high dimensional sparse Markov random fields based on an exhaustive search. To reduce the cost of exhaustive search, Atchad\'{e} and Bybee \cite{atchade2017scalable} proposed an approximate majorize-minimize (MM) algorithm for GGMs. Another stream of literature focused on testing for the presence of a change point in GGMs, rather than assuming their presence and estimating them. To that end,  \cite{cai2013two,li2012two} developed two sample tests for detecting differences in covariance matrices (rather than precision matrices), but a number of techniques developed also prove useful for the problem at hand; such tests can be operationalized for detecting the presence of a change point. Avanesove et al. \cite{avanesov2018change} proposed a test statistic, based on the de-sparsified regularized estimator of the precision matrix \cite{jankova2015confidence}, for the same task. They latter paper employed also bootstrap sampling for computing the critical values of the proposed test.
\item (Online methods) The only available work is that of Keshavarz et al. \cite{keshavarz2018sequential} that introduces an online algorithm for detecting abrupt changes in the precision matrix of sparse GGMs. Further, the proposed test statistic is shown to be asymptotically Gaussian, thus providing a closed-form expression for the critical value of the test. 
\end{enumerate}

Note that the test in \cite{keshavarz2018sequential} was designed for settings where many of the entries in the precision matrix change, thus contributing to the occurrence of a change point. However, in many applications the changes in the precision matrix may be few, in which case the former test will lack power. As an example, and using the terminology of graphical models wherein variables correspond to nodes in the underlying graph and edges capture conditional dependence relationships, consider a setting where only a few edges possibly related with a single node change; in the case of studying dependencies amongst stock returns of firms, suppose that a change is confined to those of a particular economic sector. Motivated by such examples, the goal of this paper is to develop a test and the
associated sequential change point detection algorithm for sparse GGMs, suitable for settings driven by changes affecting few edges. 

Hence, the key contributions of the paper are: \\
\noindent
(i) development of a sequential algorithm for quickest detection of changes in the precision matrix of sparse high-dimensional GGM, driven by changes in few edges. The subsequent asymptotic technical analysis calibrates both the false alarm probability of the statistical test at the heart of the detection algorithm, as well as its power. We also discuss how to operationalize the test based on plug-in quantities obtained from the data. \\
\noindent
(ii) The proposed test statistic is based on the $\ell_\infty$ norm of a standardized Wishart matrix and its asymptotic analysis heavily relies on extreme value theory. In contrast to the work in \cite{avanesov2018change}, finding an exact formulation of the critical value of our proposed test does not require bootstrap sampling, thus making the detection algorithm computationally inexpensive and suitable for applications where new data come at high frequency. Instead, our analysis leverages results developed by Galambos \cite{ galambos1988variants, galambos1972distribution} on the exact distribution of the maximum of dependent variables, which in turn provide us with a closed-form formulation of the critical value in terms of the number of nodes in the GGM and false alarm rate. Note that the sparsity assumption for the underlying GGM plays a key role for allowing us to control the correlation between the entries of the Wishart matrix appearing in the proposed test statistic, which in turn allows us to leverage the results in \cite{galambos1988variants, galambos1972distribution}. 

Our rigorously developed novel techniques are of independent interest for studying non-asymptotic properties of $\ell_\infty$ norms of random matrices. Further, note that the strategy used in \cite{avanesov2018change, cai2013two} on the detection delay based on a Gaussian approximation of the maximum of centered empirical processes \cite{chernozhukov2014gaussian} leads to an exceedingly stringent condition, which is likely not to hold in applications. On the other hand, our novel techniques lead to a mild condition that make the detection algorithm operational and suitable for real data.

The remainder of the paper is organized as follows: Section \ref{Methodology} is devoted to formulating online change point detection problem in GGMs, as well as presenting the proposed detection algorithm for both oracle and data-driven scenarios. Section \ref{Section3} is reserved for studying the asymptotic properties of the proposed oracle test under both null (no presence of a change point) and alternative hypotheses. In Section \ref{Section4}, we investigate the asymptotic properties of our algorithm in more realistic data-driven setting. In Section \ref{Synthetic}, we numerically gauge the performance of our proposed algorithm. Section \ref{Discussion} serves as the conclusion. We prove the main results of the paper in Section \ref{Proofs}. Lastly, Appendices \ref{AppendixA} and \ref{AppendixB} contain auxiliary technicalities which are essential for the results in Section \ref{Proofs}.

\subsection{Notation}

Boldface symbols denote vectors and matrices. $\bbM{1}\paren{\cdot}$, $\wedge$ and $\vee$ denote the indicator function, minimum and maximum operators, respectively. $\bb{R}^m_+$ is a compact way of representing $\brapar{0,\infty}^m$. We use $\boldsymbol{I}_m$, $\zero_m$ and $\bbM{1}_m$ to denote the $m\times m$ identity matrix, all zeros column vector of length $m$, and all ones column vector of $m$ entries, respectively. $S^{p\times p}_{++}$ denotes the space of strictly positive definite $p\times p$ matrices. For $i,j\in\set{1,\ldots,p}$, $\boldsymbol{M}_{i,:}$, $\boldsymbol{M}_{:,j}$ and $M_{ij}$ represent the $i-\mbox{th}$ row, $j-\mbox{th}$ column and $\paren{i,j}$-entry of $\boldsymbol{M}$. $\diag\paren{\boldsymbol{M}}$ refers to the main diagonal entries of $\boldsymbol{M}$ and  $\boldsymbol{M}_{\cc{S}} \coloneqq \brac{M_{ij}\bbM{1}\paren{\paren{i,j}\in\cc{S}}}_{i,j}$ for any set $\cc{S}$. For matrices of the same size $\boldsymbol{M}$ and $\boldsymbol{M'}$, $\InnerProd{\boldsymbol{M}}{\boldsymbol{M'}}{}\coloneqq \sum_{i,j} M_{ij}M'_{ij}$ denotes their usual inner product. We also use $\boldsymbol{M}\circ \boldsymbol{M'}$ for denoting the Hadamard product of $\boldsymbol{M}$ and $\boldsymbol{M'}$, defined by $\paren{\boldsymbol{M}\circ \boldsymbol{M'}}_{ij} = M_{ij}M'_{ij}$. We use the following norms on  matrix $\boldsymbol{M}$. For any $1\leq p\leq\infty$, $\LpNorm{\boldsymbol{M}}{p}$ stands for element-wise $\ell_p$-norm defined by $\LpNorm{\boldsymbol{M}}{p}^p \coloneqq \sum_{i,j} \abs{M_{ij}}^p$. $\OpNorm{\boldsymbol{M}}{p}{q}$ refers to $\ell^p \mapsto \ell^q$ operator norm given by $\OpNorm{\boldsymbol{M}}{p}{q} = \max \brac{\LpNorm{\boldsymbol{M}v}{q} s.t. \LpNorm{v}{p}=1}$. We write $\eqd$ for denoting the equality in distribution. For non-negative sequences $\set{a_m}$ and $\set{b_m}$, we write $a_m \lesssim b_m$, if there exists a bounded positive scalar $C_{\max}$ (depending on model parameters) such that $\limsup_{m\rightarrow\infty} a_m/b_m \leq C_{\max}$. Further, $a_m \asymp b_m$ refers to the case that $a_m \lesssim b_m$ and $a_m \gtrsim b_m$. For a non-negative deterministic $\set{a_m}$ and random sequence $\set{b_m}$, we write $b_m = \cc{O}_{\bb{P}}\paren{a_m}$, if $\bb{P}\paren{b_m\leq C_{\max} a_m} \rightarrow 1$, as $m\rightarrow\infty$, for a bounded scalar $C_{\max}$ (which may depend on model parameters). Lastly for a binary test statistic $\Xi$, the false alarm and mis-detection probabilities are respectively defined by
\begin{equation*}
\bb{P}_{\FA}\paren{\Xi}\coloneqq \bb{P}\paren{\Xi = 1 \mid \bb{H}_0},\quad\mbox{and}\quad \bb{P}_{\MD}\paren{\Xi}\coloneqq \bb{P}\paren{\Xi = 0 \mid \bb{H}_1}.
\end{equation*} 

\section{Problem Formulation}\label{Methodology}

We focus on a time-varying GGM with vertex (variable) set $\cc{V}=\set{1,\ldots, p}$. We observe $\boldsymbol{X}_t\in\bb{R}^p$ as a realization of a zero-mean GGM $\cc{G}_t = \paren{\cc{V}, \cc{E}_t}$ at time $t\in\bb{N}$. Specifically, $\boldsymbol{X}_t$ is a centered Gaussian vector whose density function is given by
\begin{equation*}
g\paren{\boldsymbol{x}, \TimeDepMatrix{\Omega}{t}} = \paren{2\pi}^{-p/2}\sqrt{\det \TimeDepMatrix{\Omega}{t}} \exp\paren{-\frac{\boldsymbol{x}^\top \TimeDepMatrix{\Omega}{t} \boldsymbol{x}}{2}},\quad\forall\;\boldsymbol{x}\in\bb{R}^{p},
\end{equation*}
where $\TimeDepMatrix{\Omega}{t}\in S^{p\times p}_{++}$ denotes the precision matrix of $\boldsymbol{X}_t$. Note that $\cc{G}_t$ can be equivalently represented by   $\TimeDepMatrix{\Omega}{t}$, as $\cc{E}_t = \set{\paren{r,s}\in\cc{V}\times\cc{V}: r\ne s\,\;\mbox{and}\; \TimeDepMatrix{\Omega}{t}_{rs} \ne 0}$.

A change point exists at time $t^\star$, if $\TimeDepMatrix{\Omega}{t^\star}$ switches to a new configuration at $t^\star+1$; i.e., $\TimeDepMatrix{\Omega}{t^\star} \ne \TimeDepMatrix{\Omega}{t^\star+1}$. Throughout this manuscript, $t^\star$ and $t^\star+1$ are respectively referred to as \emph{pre-change} and \emph{post-change} regimes. We use $\cc{C}^\star = \set{t^\star_0,t^\star_1,\ldots}$, sorted in ascending
order and with $t^\star_0 = 0$, to denote the location of all abrupt changes in $\set{\cc{G}_t}_{t\in\bb{N}}$. So, for consecutive change points $t^\star_j$ and $ t^\star_{j+1}$, $\boldsymbol{X}_t:\; t^\star_j < t \leq t^\star_{j+1}$ are i.i.d. centered Gaussian random vectors.

To determine whether a structural change occurs at time $t$, we use the following hypothesis testing problem.
\begin{equation}\label{HypothesisTestingFormulation}
\bb{H}_{0,t}:\;t\notin\cc{C}^\star\quad\mbox{versus}\quad \bb{H}_{1,t}:\;t\in\cc{C}^\star
\end{equation} 
Gathering adequate information about the post-change framework is necessary for distinguishing between $\bb{H}_{0,t}$ and $\bb{H}_{1,t}$, especially for high-dimensional objects such as GGMs. Strictly speaking, any sequential decision function $\Xi_t\in\set{0,1}$ flags an abrupt change at $t$ (rejecting $\bb{H}_{0,t}$ in Eq. \eqref{HypothesisTestingFormulation}) after observing $w$ samples from the potential new regime, i.e. $\boldsymbol{X}_{t+1},\ldots,\boldsymbol{X}_{t+w}$ for some $w\in\bb{N}$. Hence, $\Xi_t$ is a function of $\boldsymbol{X}_{t^\star},\ldots,\boldsymbol{X}_{t+1},\ldots,\boldsymbol{X}_{t+w}$, where $w$ and $t^\star$ denote the detection delay and the location of the last detected abrupt change, respectively. Our objective is to design a detection procedure for hypothesis testing problem \eqref{HypothesisTestingFormulation}, whose false alarm rate is controlled below some pre-specified rate $\pi_0\in\paren{0,1}$, that in addition exhibits a small mis-detection rate and short delay.

\subsection{Detection algorithm: oracle setting}\label{Section2.1}

Next, we introduce a novel online procedure for solving hypothesis testing problem \eqref{HypothesisTestingFormulation} with delay $w$. We begin by focusing on the oracle case in which the pre-change precision matrix $\TimeDepMatrix{\Omega}{t}$ is fully known. Throughout this paper, we assume that $\boldsymbol{X}_{t+1},\ldots, \boldsymbol{X}_{t+w}$ are observed prior to deciding whether $t\in\cc{C}^\star$. For ease of presentation, we also assume that no change point occurs between $\paren{t+1}$ and $\paren{t+w-1}$. Thus, $\boldsymbol{X}_{t+1},\ldots, \boldsymbol{X}_{t+w}$ are independent draws from the multivariate Gaussian distribution $\cc{N}\paren{\zero_p, \TimeDepMatrix{\Omega}{t+1}}$. This assumption will be relaxed in the next section.

Consider the transformed vectors $\boldsymbol{Y}_{t+r} = \TimeDepMatrix{\Omega}{t}\boldsymbol{X}_{t+r}$ for any $r=1,\ldots, w$. If $t\notin\cc{C}^\star$, the random variables $\set{\boldsymbol{Y}_{t+r}:\;r=1,\ldots, w}$ are i.i.d. centered Gaussian vectors with covariance matrix $\TimeDepMatrix{\Omega}{t}$. Note that an abrupt change in the structure of $\cc{G}_t$ can be translated to an abrupt change in the covariance matrix of $\set{\boldsymbol{Y}_{t+r}:\;r=1,\ldots, w}$. The simplicity of working with the covariance matrix (instead of its inverse) is the major benefit of using the transformed samples. Define $\boldsymbol{S}_{t, w}$ by
\begin{equation*}
\boldsymbol{S}_{t, w} \coloneqq \frac{1}{w}\sum_{r=1}^{w} \boldsymbol{Y}_{t+r}\boldsymbol{Y}_{t+r}^\top.
\end{equation*}
Under $\bb{H}_{0,t}$, $\boldsymbol{S}_{t, w}$ is an unbiased estimate of $\TimeDepMatrix{\Omega}{t}$. In contrast when $t\in\cc{C}^\star$, i.e. $\TimeDepMatrix{\Omega}{t} \ne \TimeDepMatrix{\Omega}{t+1}$, then the expected value of $\boldsymbol{S}_{t, w}$ is given by
\begin{equation*}
\bb{E}\paren{\boldsymbol{S}_{t, w} \mid \bb{H}_{1,t}} = \TimeDepMatrix{\Omega}{t}\paren{\TimeDepMatrix{\Omega}{t+1}}^{-1}\TimeDepMatrix{\Omega}{t}= \TimeDepMatrix{\Omega}{t} +  \TimeDepMatrix{\Omega}{t}\brac{\paren{\TimeDepMatrix{\Omega}{t+1}}^{-1}-\paren{\TimeDepMatrix{\Omega}{t}}^{-1}} \TimeDepMatrix{\Omega}{t}.
\end{equation*}

This identity suggests that studying the behavior of a suitable norm of a standardized version of $\boldsymbol{S}_{t, w}$ can be helpful for distinguishing between the null and alternative hypotheses for decision problem \eqref{HypothesisTestingFormulation}. Let $\boldsymbol{E}_{t, w}$ denote the standardized version of $\boldsymbol{S}_{t, w}$, i.e.,
\begin{equation*}
\boldsymbol{E}_{t, w} = \paren{\boldsymbol{S}_{t, w}-\TimeDepMatrix{\Omega}{t}} \circ \brac{1/\sqrt{\var\brac{\paren{\boldsymbol{S}_{t, w}}_{uv}}}}^p_{u,v=1}.
\end{equation*}
Using \emph{Isserlis’ Theorem} \cite{janson1997gaussian}, we obtain the following closed form expression for $\var\brac{\paren{\boldsymbol{S}_{t, w}}_{uv}}$. 
\begin{equation*}
\var\brac{\paren{\boldsymbol{S}_{t, w}}_{uv}} = \frac{1}{w}\brac{\TimeDepMatrix{\Omega}{t}_{uu}\TimeDepMatrix{\Omega}{t}_{vv} + \paren{\TimeDepMatrix{\Omega}{t}_{uv}}^2}.
\end{equation*}
Thus, $\boldsymbol{E}_{t, w}$ can be equivalently written as
\begin{equation}\label{Sbartw}
\boldsymbol{E}_{t, w} = \sum_{r=1}^{w}\frac{ \paren{\boldsymbol{Y}_{t+r}\boldsymbol{Y}_{t+r}^\top-\TimeDepMatrix{\Omega}{t}}}{\sqrt{w}} \circ \brac{ \paren{\TimeDepMatrix{\Omega}{t}_{uu}\TimeDepMatrix{\Omega}{t}_{vv} + \paren{\TimeDepMatrix{\Omega}{t}_{uv}}^2}^{-1/2} }^p_{u,v=1}.
\end{equation}

\begin{rem}
The idea of using the transformed random vectors $\boldsymbol{Y}_{t+1},\ldots, \boldsymbol{Y}_{t+w}$ has previously appeared in the problem of decision making on multivariate Gaussian observations. For instance, Cai et al. \cite{cai2014two} proposed a similar transformation for designing an optimal two-sample testing procedure for a sparse difference in means problem, with correlated observations. The same transformation has also showed its utility in the context of online detection of abrupt changes in the inverse covariance matrix of high dimensional GGMs \cite{keshavarz2018sequential}.
\end{rem}

Next, we introduce the proposed sequential detection algorithm. Let $\pi_0$ be a pre-specified false alarm rate. Consider the following binary decision function
\begin{equation}\label{ProposedTest}
T_t = \bbM{1}\paren{\LpNorm{\boldsymbol{E}_{t, w}}{\infty} \geq \zeta_{\pi_0,p,w}},
\end{equation}
for distinguishing between $\bb{H}_{0,t}$ (no abrupt change at $t$) and $\bb{H}_{1,t}$ in Eq. \eqref{HypothesisTestingFormulation}, with $\zeta_{\pi_0,p,w}$ being a critical value depending on $\pi_0, p,$ and $w$. Simply put, we reject $\bb{H}_{0,t}$ ($T_t=1$) only if $\LpNorm{\boldsymbol{E}_{t, w}}{\infty}$ is greater than $\zeta_{\pi_0,p,w}$ . We choose $\zeta_{\pi_0,p,w}$ so that the probability of falsely rejecting $\bb{H}_{0,t}$ is around $\pi_0$, if there is no change point between $\paren{t+1}$ and $\paren{t+w-1}$.

\begin{rem}
As mentioned in the introductory Section,
Keshavarz et al. \cite{keshavarz2018sequential} examined a similar problem, and designed an online algorithm for detecting abrupt changes in the topology of GGMs involving \emph{many edges simultaneously}. Their method is based on aggregating a convex function of the diagonal entries of $\boldsymbol{E}_{t, w}$, and proves effective for spotting change points distributing across the entire GGM (e.g., $\TimeDepMatrix{\Omega}{t+1}-\beta\TimeDepMatrix{\Omega}{t}$ is positive definite for some $\beta>0$). However, the emphasis in this manuscript is on addressing the online detection problem wherein the change is driven by changes in a few edges. This different objective motivates the choice of the $\ell_\infty$ norm in the formulation of $T_t$.
\end{rem}

\subsection{Data driven case}\label{Section2.2}

The oracle setting is not realistic in most real-world applications. We relax this condition by plugging an estimate of $\TimeDepMatrix{\Omega}{t}$ into $T_t$. Specifically, let $\TimeDepMatrix{\hat{\Omega}}{t}$ denote an estimate of $\TimeDepMatrix{\Omega}{t}$. We approximate the oracle decision function \eqref{ProposedTest} by
\begin{equation}\label{PluginProposedTest}
\hat{T}_t = \bbM{1}\paren{\LpNorm{\boldsymbol{\hat{E}}_{t, w}}{\infty} \geq \zeta_{\pi_0,p,w}},
\end{equation}
where $\boldsymbol{\hat{E}}_{t, w}$ represents the plug-in estimate of $\boldsymbol{E}_{t, w}$ (see Eq. \eqref{Sbartw}), given by
\begin{equation}\label{PlugnStatistic}
\boldsymbol{\hat{E}}_{t, w} = \sum_{r=1}^{w}\frac{ \paren{\TimeDepMatrix{\hat{\Omega}}{t}\boldsymbol{X}_{t+r}\paren{\TimeDepMatrix{\hat{\Omega}}{t}\boldsymbol{X}_{t+r}}^\top-\TimeDepMatrix{\hat{\Omega}}{t}}}{\sqrt{w}} \circ \brac{ \paren{\TimeDepMatrix{\hat{\Omega}}{t}_{uu}\TimeDepMatrix{\hat{\Omega}}{t}_{vv} + \paren{\TimeDepMatrix{\hat{\Omega}}{t}_{uv}}^2}^{-1/2} }^p_{u,v=1}.
\end{equation}
Given enough temporal separation between consecutive changes in a time-varying sparse GGM $\set{\cc{G}_t: t\in\bb{N}}$, the pre-change precision matrix can be estimated using effective procedures in the literature, such as the \emph{CLIME} \cite{cai2011constrained} algorithm. Controlling the false alarm and mis-detection rate hinges upon the availability of a good estimate of $\TimeDepMatrix{\Omega}{t}$, and an adequate separation in time between two consecutive change points. In order to formalize this notion, recall that $t^\star_j$ denotes the location of the $j$-th change point and we also set $t^\star_0 = 0$. Specifically, we suppose that there is $N\in\bb{N}$, depending on $p, w$ and the sparsity pattern of $\cc{G}_t$ between $t^\star_j$ and $t^\star_{j+1}$, such that 
\begin{equation}\label{BurnInCond}
\abs{t^\star_{j+1}-t^\star_j} > N,\;\forall\; j\geq 1.
\end{equation}
We refer to the first $N$ samples after $t^\star_j$ as the \emph{burn-in period}. Conditions similar to Eq. \eqref{BurnInCond} have appeared in both offline and online change point detection literature (see e.g. \cite{keshavarz2018sequential,roy2016change}). For the time being, selection of $N$ is postponed to a later Section.

Detecting each change point is broken into two segments. For brevity, we only focus on locating $t^\star_1$. Notice that for each $t\leq t^\star_1$, $\TimeDepMatrix{\Omega}{t} = \TimeDepMatrix{\Omega}{1}$.
\begin{enumerate}[label = (\alph*),leftmargin=*]
\item Given $\boldsymbol{X}_1,\ldots,\boldsymbol{X}_{N}$, we estimate $\TimeDepMatrix{\Omega}{1}$ using the CLIME algorithm (which is denoted by $\TimeDepMatrix{\hat{\Omega}}{1}$).
\item For any $t>N$, we compute $\boldsymbol{\hat{E}}_{t, w}$ and $\hat{T}_t$ using Eq. \eqref{PluginProposedTest}-\eqref{PlugnStatistic}. When $\hat{T}_t = 0$, we update $\TimeDepMatrix{\hat{\Omega}}{1}$ after observing a new data-point. In contrast if $\hat{T}_t$ equals one (an abrupt change at $t$), we wait for $\boldsymbol{X}_{t+1},\ldots,\boldsymbol{X}_{t+N}$ for estimating the post-change inverse covariance matrix.
\end{enumerate}

We use batch procedure for updating the pre-change precision matrix, wherein we first get $B$ (a pre-specified batch size) new samples and subsequently a new estimate at time $t = N+kB$ ($k\in\bb{N}$) by employing $\boldsymbol{X}_1,\ldots,\boldsymbol{X}_{N+kB}$; the parameter $k$ tracks the number of size-$B$ batches before the first abrupt change. Throughout this paper, the CLIME algorithm~\cite{cai2011constrained} is used for estimating the oracle test statistic $\boldsymbol{E}_{t, w}$, due to its desirable theoretical and numerical properties. The detailed pseudocode of the detection procedure is presented in \emph{Algorithm $1$}.

\begin{center}\label{Algo1}
\begin{tabular}{ >{\arraybackslash}m{6.1in} }
\thickhline
\vspace{1mm}
\begingroup
\fontsize{12pt}{12pt}\selectfont
\textbf{Algorithm 1} Sequential detection with batch update of pre-change precision matrix 
\endgroup\\		
\hline
\multicolumn{1}{l}{\small \textbf{Input:} $N, w, B,$and $\zeta_{\pi_0, p, w}$}\\
\multicolumn{1}{l}{\small \textbf{Initialization} Set $\hat{\cc{D}} = \emptyset$ and $k = 0$. Given $\boldsymbol{X}_1,\ldots,\boldsymbol{X}_{N}$, compute $\TimeDepMatrix{\hat{\Omega}}{1}$ by the CLIME algorithm.}\\
\multicolumn{1}{l}{\small Also set $\hat{t}_{last} = 0$, where $\hat{t}_{last}$ denotes the estimated location of the last change point.}\\
\multicolumn{1}{l}{\small\textbf{Iterate} For $t > N$} \\
\vspace{-2mm}
\begin{itemize}
\item[] \small Set $\hat{T}_t = \bbM{1}\paren{\LpNorm{\boldsymbol{\hat{E}}_{t, w}}{\infty} \geq \zeta_{\pi_0,p,w}}$.
\item[] \small If $\hat{T}_t = 0$ (no change point)
\vspace{1mm}
\begin{itemize}
\item[] \small $b \leftarrow b+1$ and $t \leftarrow t+1$.
\item[] \small If $b = B$ (Update pre-change precision matrix after observing $B$ new samples)
\vspace{1mm}
\begin{itemize}
\item[] \small Update $\TimeDepMatrix{\hat{\Omega}}{t}$ using the CLIME algorithm with data points $\boldsymbol{X}_{1+\hat{t}_{last}},\ldots, \boldsymbol{X}_{t-1},\boldsymbol{X}_{t}$.
\item[] \small $b\leftarrow 0$.
\end{itemize}
\item[] \small Else
\vspace{1mm}
\begin{itemize}
\item[] \small $\TimeDepMatrix{\hat{\Omega}}{t} = \TimeDepMatrix{\hat{\Omega}}{t-1}$
\end{itemize}
\end{itemize}
\item[] \small Else
\begin{itemize}
\item[] \small $\hat{t}_{last} = t$ and $\hat{\cc{D}} \leftarrow \hat{\cc{D}} \cup \set{{\hat{t}_{last}}}$.
\item[] \small Given $\boldsymbol{X}_{t},\ldots, \boldsymbol{X}_{t+N-1}$, estimate post-change precision matrix using CLIME method.
\item[] \small $t \leftarrow t+N$ and $b\leftarrow 0$.
\end{itemize}
\vspace{-3mm}		
\end{itemize}\\
\multicolumn{1}{l}{\small \textbf{Output: $\hat{\cc{D}}$}}\\
\hline
\end{tabular}
\end{center}

\section{Large-sample analysis of $T_t$}\label{Section3}

This section is devoted to the large-sample properties of the oracle decision function $T_t$ introduced in Eq. \eqref{ProposedTest} under both the null and alternative hypotheses. Recall that calculating $T_t$ requires full knowledge of $\TimeDepMatrix{\Omega}{t}$. The results in this section form the necessary backbone of the analysis involving data. In particular, we address the following issues.
\begin{enumerate}[leftmargin=*]
\item How to select the critical value $\zeta_{\pi_0,p,w}$ and $w$, to ensure that the false alarm probability converges to $\pi_0$, in the asymptotic scenario of growing graph size $p$ and delay $w$? 
\item Establishing an upper bound for the mis-detection probability of $T_t$.
\end{enumerate}
For a more clear presentation of the main results, we start by introducing some simplifying notation.

\begin{defn}\label{InnerProdGauss}
Let $\boldsymbol{X}, \boldsymbol{Y}\in\bb{R}^w$ be two independent standard Gaussian random vectors. Define their standardized inner product by
\begin{equation*}
\vartheta_w \coloneqq \frac{\InnerProd{\boldsymbol{X}}{\boldsymbol{Y}}}{\sqrt{w}}.
\end{equation*}
\end{defn}

\begin{defn}\label{WellPosedMatrixSet}
For two scalars $d_{\max}\in\bb{N}$ and $\alpha_{\min} > 0$, define $\cc{C}^{p\times p}_{++}\paren{\alpha_{\min}, d_{\max}}$ by
\begin{equation*}
\cc{C}^{p\times p}_{++}\paren{\alpha_{\min}, d_{\max}} \coloneqq \set{\boldsymbol{A}\in S^{p\times p}_{++}:\; \max_{1\leq i\leq p} \LpNorm{\boldsymbol{A}\boldsymbol{e}_i}{0} \leq d_{\max},\; \lambda_{\min}\paren{\boldsymbol{A}}\geq \alpha_{\min} }.
\end{equation*}
\end{defn}

To obtain the limiting null distribution of $T_t$, in addition to requiring sparsity of $\TimeDepMatrix{\Omega}{t}$, we also assume that its eigenvalues are bounded from above and below. Specifically, we consider the following setting.

\begin{assu}\label{AssuCLT}
$\TimeDepMatrix{\Omega}{t}\in \cc{C}^{p\times p}_{++}\paren{\alpha_{\min}, d_{\max}}$ for some fixed, bounded and strictly positive scalars $d_{\max}$ and $\alpha_{\min}$. Further, there exists a scalar $r_{\max}\in\paren{0,1}$ such that 
\begin{equation*}
\max_{1\leq i < j\leq p} \abs{\frac{\TimeDepMatrix{\Omega}{t}_{ij}}{\sqrt{\TimeDepMatrix{\Omega}{t}_{ii}\TimeDepMatrix{\Omega}{t}_{jj}}}} \leq r_{\max}.
\end{equation*}
\end{assu}

A slightly weaker version of Assumption \ref{AssuCLT} has appeared in the context of two-sample testing for high-dimensional and sparse means (see e.g. \cite{cai2014two}). Assumption \ref{AssuCLT} restricts $\TimeDepMatrix{\Omega}{t}$ to have sparse rows. Namely, the maximum degree of $\cc{G}^{\paren{t}}$ is supposed to remain below some fixed $d_{\max}$ for all $t$'s. We postulate this assumption (instead of softer versions controlling $\OpNorm{\TimeDepMatrix{\Omega}{t}}{1}{1}$ from above) only for simplifying the theoretical derivations, without being distracted by cumbersome algebraic details. We believe that Assumption \ref{AssuCLT} can be relaxed by making appropriate adjustments in the proof.

As the first result, we present sufficient conditions on $w$, $p$, and the topology of the time-varying GGM, under which the asymptotic false alarm probability of $T_t$, in Eq. \eqref{ProposedTest}, is guaranteed to remain below a pre-specified level $\pi_0\in\paren{0,\frac{1}{2}}$. For studying the null distribution of $T_t$, we assume that no change point occurs between $t$ and $t+w$, i.e., $\TimeDepMatrix{\Omega}{t} = \TimeDepMatrix{\Omega}{t+1} = \ldots = \TimeDepMatrix{\Omega}{t+w}$. Such a restriction provides both intuitive insights to the theoretical novelty of the results and eases comprehension of the proof strategies by the reader.

\begin{thm}\label{thm0} (A First Result based on a Stringent Condition) \\
Consider the asymptotic scenario $p, w\rightarrow\infty$ with the following conditions:
\begin{enumerate}[label = (\alph*),leftmargin=*]
\item $\TimeDepMatrix{\Omega}{t}$ satisfies Assumption \ref{AssuCLT}.
\item $w^{-1}\log^8 p \rightarrow 0$.
\end{enumerate}
Further, let $\pi_0\in\paren{0,\frac{1}{2}}$ and choose $\zeta_{\pi_0, p, w}$ by
\begin{equation}
\zeta^2_{\pi_0, p, w} = 2\log{p+1 \choose 2}-\log\log{p+1 \choose 2} -2\log\brac{2\sqrt{\pi}\log\paren{\frac{1}{1-\pi_0/2}}}
\end{equation}
Then, 
\begin{equation*}
\limsup_{w, p\rightarrow\infty} \;\bb{P}_{\FA}\paren{T_t} = \limsup_{w, p\rightarrow\infty} \;\bb{P}\paren{\LpNorm{\boldsymbol{E}_{t, w}}{\infty}\geq \zeta_{\pi_0, p, w} } \leq \pi_0.
\end{equation*}	
\end{thm}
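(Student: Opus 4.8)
The plan is to show that under $\bb{H}_{0,t}$ the test statistic $\LpNorm{\boldsymbol{E}_{t,w}}{\infty}$ behaves, after squaring and centering, like the maximum of ${p+1 \choose 2}$ dependent random variables whose tails are asymptotically standard Gumbel, and then to invoke a Galambos-type extreme-value result to pin down the critical value. First I would recall that $\boldsymbol{E}_{t,w}$ has entries
\begin{equation*}
\paren{\boldsymbol{E}_{t,w}}_{uv} = \frac{1}{\sqrt{w}}\sum_{r=1}^{w}\frac{\paren{\boldsymbol{Y}_{t+r}\boldsymbol{Y}_{t+r}^\top - \TimeDepMatrix{\Omega}{t}}_{uv}}{\sqrt{\TimeDepMatrix{\Omega}{t}_{uu}\TimeDepMatrix{\Omega}{t}_{vv}+\paren{\TimeDepMatrix{\Omega}{t}_{uv}}^2}},
\end{equation*}
so each entry is a normalized sum of $w$ i.i.d.\ mean-zero terms with unit variance. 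The first step is a Gaussian approximation: since $w^{-1}\log^8 p\to 0$, a CLT with Berry--Esseen--type control (or the fact that each $\paren{\boldsymbol{E}_{t,w}}_{uv}$ is a standardized quadratic form that concentrates like a $\chi^2$-difference) lets me replace $\paren{\boldsymbol{E}_{t,w}}_{uv}$ by a jointly Gaussian family $\set{Z_{uv}}_{u\le v}$ with the same covariance structure, uniformly over the ${p+1\choose 2}$ coordinates, at a cost negligible relative to the extremal scale $\sqrt{2\log p}$. Here I would use the $\vartheta_w$ notation of Definition~\ref{InnerProdGauss} to describe the off-diagonal entries exactly and control their moment generating function.

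The second step is the correlation control. By Assumption~\ref{AssuCLT}, $\TimeDepMatrix{\Omega}{t}$ has at most $d_{\max}$ nonzeros per row, so $\TimeDepMatrix{\Omega}{t}_{uv}=0$ for all but $O(p\,d_{\max})$ pairs $(u,v)$; moreover the correlation between $\paren{\boldsymbol{E}_{t,w}}_{uv}$ and $\paren{\boldsymbol{E}_{t,w}}_{u'v'}$ can be written, via Isserlis, in terms of products $\TimeDepMatrix{\Omega}{t}_{uu'}\TimeDepMatrix{\Omega}{t}_{vv'}$ etc., hence is nonzero only for a bounded number of neighbours of each index pair, and when nonzero is bounded away from $1$ because the normalizing correlations are $\le r_{\max}<1$. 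This sparse, bounded-away-from-one dependence graph is exactly the structure required to apply Galambos's results \cite{galambos1988variants,galambos1972distribution} on the limiting distribution of the maximum of dependent (nearly Gaussian) variables: the extremal behaviour coincides with the i.i.d.\ case. Concretely, for $M := {p+1\choose 2}$ standard Gaussians one has $\bb{P}\paren{\max_k Z_k^2 \ge \zeta^2} \to 1-\exp\paren{-e^{-x/2}/\sqrt{\pi}}$ when $\zeta^2 = 2\log M - \log\log M + x$, and the stated choice of $\zeta^2_{\pi_0,p,w}$ is precisely the solution of $e^{-x/2}/\sqrt{\pi} = \log\paren{1/(1-\pi_0/2)}$, giving limiting false-alarm probability $\pi_0/2 \le \pi_0$.

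The main obstacle I anticipate is making the Gaussian approximation uniform over all $M \asymp p^2$ entries while tracking the error at the precise $\sqrt{2\log p}$ scale that the extreme-value calculation is sensitive to — an ordinary CLT is not enough, and this is presumably why the stringent rate $w^{-1}\log^8 p\to 0$ appears (the eighth power of $\log p$ suggesting a moderate-deviation / eighth-moment bound applied entrywise with a union bound). A secondary technical point is verifying that Galambos's hypotheses genuinely apply to the quadratic-form variables rather than only to exact Gaussians, which is where I would lean on the sparsity of $\TimeDepMatrix{\Omega}{t}$ (and the remark that the paper's later, milder condition replaces exactly this step by sharper tools). I would organize the write-up as: (i) reduce $\LpNorm{\boldsymbol{E}_{t,w}}{\infty}^2$ to $\max$ over the upper triangle; (ii) compute entrywise variances and pairwise correlations via Isserlis, exhibiting the bounded-degree dependence graph; (iii) Gaussian approximation with the $\log^8 p$-rate error bound; (iv) apply the Galambos limit to get the Gumbel limit for the squared max; (v) solve for $x$ and conclude $\limsup \bb{P}_{\FA}(T_t)\le\pi_0$.
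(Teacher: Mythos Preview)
Your high-level outline---Gaussian approximation followed by an extreme-value limit---is right, but you have conflated the machinery of this theorem with that of the next one, and the specific tools you name would not deliver the result.

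For Theorem~\ref{thm0} the paper does \emph{not} use Galambos. It first splits the two-sided event by a union bound into two one-sided tails, introduces a centered Gaussian matrix $\boldsymbol{G}$ with the same covariance as $\boldsymbol{E}$, and then establishes two claims. The Gaussian extreme-value claim, $\bb{P}\paren{\max_{\cc{K}_p}\boldsymbol{G}_{rs}\ge\zeta}\to\pi_0/2$, is handled by showing $\OpNorm{\boldsymbol{Q}}{1}{1}\le 2\OpNorm{\boldsymbol{\Omega}}{1}{1}^2\le 2d_{\max}^2$ and invoking Lemma~6 of \cite{cai2013two} (a Bonferroni-based result for sparse-covariance Gaussians), not Galambos. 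The approximation claim, $\max_{\cc{K}_p}\boldsymbol{E}_{rs}-\max_{\cc{K}_p}\boldsymbol{G}_{rs}\cp{\bb{P}}0$, is obtained by applying Theorem~4.1 of Chernozhukov et~al.\ \cite{chernozhukov2014gaussian}, which couples the \emph{maxima} directly rather than approximating entry by entry. Your proposed entrywise Berry--Esseen or moderate-deviation bound plus a union bound over $\cc{K}_p$ would not give control at the precise extremal scale $\sqrt{2\log\abs{\cc{K}_p}}$; this is exactly the obstacle Chernozhukov's coupling is designed to bypass. The stringent rate $w^{-1}\log^8 p\to 0$ is not an eighth-moment artifact: it arises because their bound contains a term $B_1\log p/\delta^2$ with $B_1\lesssim w^{-1/2}\log^3 p$ (from $\bb{E}\LpNorm{\boldsymbol{Y}}{\infty}^6$), so that $\delta\to 0$ forces $\log^4 p/\sqrt{w}\to 0$.

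Galambos' theorem is the engine of the \emph{refined} result, Theorem~\ref{Thm1}, where it is applied directly to the non-Gaussian entries $\abs{\boldsymbol{E}_{rs}}$, with no intermediate Gaussian step at all; that is precisely what buys the milder condition $w^{-1}\log^3 p\to 0$. Your plan to first pass to Gaussians and then run Galambos reinstates the bottleneck you are trying to avoid. Finally, a small bookkeeping point: because the paper works one-sided and doubles via the union bound, the limiting one-sided tail is $1-\exp\bigl[-e^{-x/2}/(2\sqrt{\pi})\bigr]$, not $1-\exp\bigl[-e^{-x/2}/\sqrt{\pi}\bigr]$; solving the former for $\pi_0/2$ is what produces the $2\sqrt{\pi}$ and the $\pi_0/2$ inside the stated $\zeta^2_{\pi_0,p,w}$, and the two one-sided pieces then combine to give $\limsup\bb{P}_{\FA}\le\pi_0$.
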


\begin{rem}\label{rem3.1}
For gaining insights, we outline a brief sketch of the proof of Theorem \ref{thm0}; full details are provided in Section \ref{Proofs}. Define the set $\cc{K}_p \coloneqq \set{\paren{r,s}:\;1\leq r\leq s\leq p}$. The goal is to find a critical value $\zeta_{\pi_0, p, w}$ such that
\begin{equation*}
\bb{P}_{\FA}\paren{T_t} = \bb{P}\paren{\max_{\paren{r,s}\in\cc{K}_p} \abs{\paren{\boldsymbol{E}_{t, w}}_{rs}}\geq \zeta_{\pi_0, p, w}} \leq \pi_0\brac{1+o\paren{1}},\quad\mbox{as\;\;} p, w\rightarrow\infty.
\end{equation*}
For ease of presentation, we drop the dependence on $t$ and $w$ in $\boldsymbol{E}_{t, w}$. An application of the union bound yields
\begin{equation*}
\bb{P}_{\FA}\paren{T_t}\leq \bb{P}\paren{\max_{\paren{r,s}\in\cc{K}_p} \boldsymbol{E}_{rs}\geq \zeta_{\pi_0, p, w}} + \bb{P}\paren{\max_{\paren{r,s}\in\cc{K}_p} -\boldsymbol{E}_{rs}\geq \zeta_{\pi_0, p, w}}.
\end{equation*}
The goal is to demonstrate that $\bb{P}\paren{\max_{\paren{r,s}\in\cc{K}_p} \boldsymbol{E}_{rs}\geq \zeta_{\pi_0, p, w}} \leq \frac{\pi_0}{2}\brac{1+o\paren{1}}$. As each entry of $\boldsymbol{E}$ is a summation of $w$ independent standardized sub-exponential random variables, well-known results for the Gaussian approximation of the maximum of zero-mean empirical processes -see Theorem $4.1$ in Chernozhukov et al. \cite{chernozhukov2014gaussian}- prove that under Assumption \ref{AssuCLT},
\begin{equation}\label{GaussApprox}
\max_{\paren{r,s}\in\cc{K}_p} \boldsymbol{E}_{rs}-\max_{\paren{r,s}\in\cc{K}_p} \boldsymbol{G}_{rs} \cp{\bb{P}} 0,
\end{equation}
where $\boldsymbol{G}$ is a centered Gaussian process (indexed by $\cc{K}_p$) with the same correlation structure as $\boldsymbol{E}$. Specifically,
\begin{equation*}
\cov\paren{\boldsymbol{G}_{rs}, \boldsymbol{G}_{r's'}} = \cov\paren{\boldsymbol{E}_{rs}, \boldsymbol{E}_{r's'}},\quad \paren{r,s},\paren{r',s'}\in\cc{K}_p.
\end{equation*}
Next, we employ Lemma $6$ of \cite{cai2014two}, on the extreme value distribution of unstructured sequence of Gaussian random variables with sparse covariance matrix, to obtain $\zeta_{\pi_0, p, w}$ satisfying
\begin{equation*}
\bb{P}\paren{\max_{\paren{r,s}\in\cc{K}_p} \boldsymbol{G}_{rs}\geq \zeta_{\pi_0, p, w}} \rightarrow \frac{\pi_0}{2}.
\end{equation*}
Combining these two pieces of information concludes the proof.
\end{rem}

\begin{rem}
We  presented a rather simple version of Theorem \ref{thm0} in this section, due to the difficulties of tracking cumbersome algebraic derivations. For instance, the reader may demand to see the convergence rate in Eq. \eqref{GaussApprox} as a function of $w$ and $p$. A meticulous review of algebraic steps in the proof of Theorem \ref{thm0} (see the proof of Claim \ref{Claim2Thm0} in pages $22-24$) reveals that for any $\xi\in\brapar{0, 8}$
\begin{equation*}
\bb{P}\paren{\abs{\max_{\paren{r,s}\in\cc{K}_p} \boldsymbol{E}_{rs}-\max_{\paren{r,s}\in\cc{K}_p} \boldsymbol{G}_{rs}} \geq C_\xi\sqrt[8]{\frac{\log^{8+\xi} p}{w}} } = \cc{O}\paren{\sqrt[4]{\frac{\log^{8-\xi} p}{w}}},
\end{equation*}
where $C_\xi$ is a bounded scalar depending only on $\xi$. Note that the second condition in Theorem \ref{thm0} focuses on the simplest case of $\xi = 0$.
\end{rem}

\begin{rem}
The union bound in the proof of Theorem \ref{thm0} provides a proper setting for using existing Gaussian approximation results in the literature. Despite an unsuccessful attempt, we guess that a modified technique can be used for proving the following (two-sided) variant of Eq. \eqref{GaussApprox}.
\begin{equation*}
\max_{\paren{r,s}\in\cc{K}_p} \abs{\boldsymbol{E}_{rs}}-\max_{\paren{r,s}\in\cc{K}_p} \abs{\boldsymbol{G}_{rs}} \cp{\bb{P}} 0,
\end{equation*}
Given the validity of our conjecture, one can show (using the same proposed method) that $\bb{P}_{\FA}\paren{T_t}\rightarrow \pi_0$, if the critical value $\zeta_{\pi_0, p, w}$ is chosen by
\begin{equation}\label{zeta2}
\zeta^2_{\pi_0, p, w} = 2\log{p+1 \choose 2}-\log\log{p+1 \choose 2} -2\log\brac{\sqrt{\pi}\log\paren{\frac{1}{1-\pi_0}}}+o\paren{1}.
\end{equation}
\end{rem}

Note that Theorem \ref{thm0} requires $\log^8 p = o\paren{w}$ that is excessively stringent for most real-world settings. For example, for a GGM with $p=100$ nodes/variables, $w$ should be of the order $10^6$. The reason is that leveraging a generic infinite-dimensional Gaussian approximation result in \cite{chernozhukov2014gaussian} is an unnecessarily powerful tool for obtaining the null distribution of $\max_{\paren{r,s}\in\cc{K}_p} \abs{\boldsymbol{E}_{rs}}$ (maxima of a finite-dimensional, yet asymptotically growing stochastic process). Hence, the resulting proof strategy that employs Theorem $4.1$ in \cite{chernozhukov2014gaussian}  (note that $\abs{\boldsymbol{E}_{rs}}$ comprises of heavy-tail quadratic terms) requires a stringent condition on $w$. 

In the sequel, we derive results for the false alarm rate of $T_t$ based on a novel theoretical technique that relaxes considerably Condition (b). Specifically, we establish:

\begin{thm}\label{Thm1} [A Refined Result based on a Relaxed Condition] \\
We consider the asymptotic scenario $p, w\rightarrow\infty$ with the following conditions:
\begin{enumerate}[label = (\alph*),leftmargin=*]
\item $\TimeDepMatrix{\Omega}{t}$ satisfies Assumption \ref{AssuCLT}.
\item $w^{-1}\log^3 p \rightarrow 0$.
\end{enumerate}
Further, let $\pi_0\in\paren{0,1}$ and choose $\zeta_{\pi_0, p, w}$ so that
\begin{equation}\label{zeta}
\bb{P}\paren{ \abs{\vartheta_w} \geq \zeta_{\pi_0, p, w}} = \frac{2}{p\paren{p+1}}\log\paren{\frac{1}{1-\pi_0}}.
\end{equation}
Then as $w, p\rightarrow\infty$, we have
\begin{equation*}
\bb{P}_{\FA}\paren{T_t} = \bb{P}\paren{\LpNorm{\boldsymbol{E}_{t, w}}{\infty}\geq \zeta_{\pi_0, p, w} } \rightarrow \pi_0.
\end{equation*} 
\end{thm}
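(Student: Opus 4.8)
# Proof Proposal for Theorem \ref{Thm1}

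\textbf{Proof proposal.}
The plan is to reduce the claim to a Poisson limit theorem for the number of exceedances and then verify the factorial-moment condition that characterizes such a limit. Write $\cc{K}_p=\set{(r,s):1\le r\le s\le p}$, $n=\binom{p+1}{2}=\abs{\cc{K}_p}$, $A_{rs}=\set{\abs{(\boldsymbol{E}_{t,w})_{rs}}\ge\zeta_{\pi_0,p,w}}$ and $W=\sum_{(r,s)\in\cc{K}_p}\ind{A_{rs}}$, so that $\bb{P}_{\FA}(T_t)=\bb{P}(W\ge1)$. By the calibration \eqref{zeta} and Definition \ref{InnerProdGauss}, the target intensity is $\lambda:=\log\paren{1/(1-\pi_0)}$, since $n\,\bb{P}\paren{\abs{\vartheta_w}\ge\zeta_{\pi_0,p,w}}=\lambda$. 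I would use the binomial-moment method for extremes (in the spirit of the Galambos results cited in the Introduction): if the factorial moments $S_k:=\bb{E}\binom{W}{k}$ satisfy $S_k\to\lambda^k/k!$ for every fixed $k\ge1$, then $W$ converges in law to a Poisson$(\lambda)$ variable, and the Bonferroni inequalities $\sum_{k=1}^{2M}(-1)^{k+1}S_k\le\bb{P}(W\ge1)\le\sum_{k=1}^{2M-1}(-1)^{k+1}S_k$ yield, upon letting $p,w\to\infty$ and then $M\to\infty$, $\bb{P}_{\FA}(T_t)\to\sum_{k\ge1}(-1)^{k+1}\lambda^k/k!=1-e^{-\lambda}=\pi_0$. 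So the whole content is the asymptotics of $S_k$.

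First I would record the structure coming from the whitening $\boldsymbol{Y}_{t+\ell}=\TimeDepMatrix{\Omega}{t}\boldsymbol{X}_{t+\ell}$, which under the null makes $\boldsymbol{Y}_{t+1},\dots,\boldsymbol{Y}_{t+w}$ i.i.d.\ $\cc{N}\paren{\zero_p,\TimeDepMatrix{\Omega}{t}}$. Conditioning a pair $(r,s)$ on the corresponding $2\times2$ block of $\TimeDepMatrix{\Omega}{t}$ and invoking Assumption \ref{AssuCLT}, $(\boldsymbol{E}_{t,w})_{rs}$ is a normalized sum of $w$ i.i.d.\ mean-zero sub-exponential variables whose law depends only on the normalized correlation $\rho_{rs}\in\brac{-r_{\max},r_{\max}}$, coinciding with the law of $\vartheta_w$ when $\rho_{rs}=0$. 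By Isserlis' theorem, $\cov\paren{(\boldsymbol{E}_{t,w})_{rs},(\boldsymbol{E}_{t,w})_{r's'}}$ is proportional to $\TimeDepMatrix{\Omega}{t}_{rr'}\TimeDepMatrix{\Omega}{t}_{ss'}+\TimeDepMatrix{\Omega}{t}_{rs'}\TimeDepMatrix{\Omega}{t}_{sr'}$; hence two distinct entries can be correlated only if their index sets are joined by an edge of $\cc{G}^{\paren{t}}$ or share a coordinate, and genuinely so only if joined by two edges or by a shared coordinate plus one edge — situations whose count per base pair is $\cc{O}(d_{\max}^2)$, and in which $r_{\max}<1$ together with the eigenvalue control in Assumption \ref{AssuCLT} forces the correlation to be at most a fixed $\rho_\star<1$ in absolute value. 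Since $\zeta_{\pi_0,p,w}^2=2\log n-\log\log n+\cc{O}(1)\asymp\log p$, condition (b) gives $\zeta_{\pi_0,p,w}=o\paren{w^{1/6}}$, which is precisely the regime in which a Cramér-type moderate-deviation expansion applies; made uniform over $\rho\in\brac{-r_{\max},r_{\max}}$ and over the diagonal entries, it yields $\bb{P}(A_{rs})=\bb{P}\paren{\abs{\vartheta_w}\ge\zeta_{\pi_0,p,w}}\paren{1+o(1)}$ for every $(r,s)\in\cc{K}_p$, so $S_1=n\,\bb{P}\paren{\abs{\vartheta_w}\ge\zeta_{\pi_0,p,w}}\paren{1+o(1)}=\lambda\paren{1+o(1)}\to\lambda$. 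This moderate-deviation step replaces the coarse infinite-dimensional Gaussian approximation used for Theorem \ref{thm0} and is exactly what relaxes $\log^8 p=o(w)$ to $\log^3 p=o(w)$.

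For $k\ge2$ I would expand $S_k=\tfrac1{k!}\sum\bb{P}\paren{A_{r_1s_1}\cap\dots\cap A_{r_ks_k}}$, the sum over ordered $k$-tuples of pairwise distinct elements of $\cc{K}_p$, and classify tuples by the overlap-and-adjacency graph $H$ they induce on $\set{1,\dots,k}$ (an edge between positions $a,b$ when $(r_a,s_a)$ and $(r_b,s_b)$ share a coordinate or are joined by an edge of $\cc{G}^{\paren{t}}$). Distinct $H$-components involve disjoint, non-adjacent blocks of coordinates, so their events factorize. The dominant contribution is from $H$-edgeless tuples: conditionally on the relevant whitening coordinates the $k$ events are independent, and integrating out the common quadratic scales $\norm{\cdot}{}^2/w=1+\cc{O}_{\bb{P}}\paren{w^{-1/2}}$ costs only a factor $1+o(1)$ — here is where $\log^3 p=o(w)$ enters, via $\bb{E}\exp\paren{c\,\zeta_{\pi_0,p,w}^2\paren{\norm{\cdot}{}^2/w-1}}=1+o(1)$ — so these tuples sum to $\tfrac1{k!}\paren{n\,\bb{P}(\abs{\vartheta_w}\ge\zeta_{\pi_0,p,w})}^k\paren{1+o(1)}=\lambda^k/k!+o(1)$, the discarded degenerate tuples contributing $\cc{O}\paren{\max_{rs}\bb{P}(A_{rs})\cdot S_1^{k-1}}=o(1)$. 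Tuples with an $H$-edge are negligible: a dependent pair is either genuinely correlated — of which there are only $\cc{O}(n)$, by the bounded-degree bound — with $\bb{P}(A_a\cap A_b)=\cc{O}\paren{n^{-2/(1+\rho_\star)}}=\cc{O}\paren{n^{-1-\delta}}$ for $\delta=\tfrac{1-\rho_\star}{1+\rho_\star}>0$ (a union bound on the sum of the two entries plus a univariate moderate deviation); or merely coupled through a shared coordinate with vanishing correlation — of which there are $\cc{O}\paren{n^{3/2}}$ — with $\bb{P}(A_a\cap A_b)=\bb{P}(A_a)\bb{P}(A_b)\paren{1+o(1)}=\cc{O}\paren{n^{-2}}$ by conditional independence given the shared coordinate. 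Multiplying by the $\cc{O}\paren{n^{k-2}}$ completions and the $\cc{O}\paren{n^{-1}}$ bounds on the remaining $k-2$ events, the two families contribute $\cc{O}\paren{n^{-\delta}}$ and $\cc{O}\paren{n^{-1/2}}$ respectively (larger $H$-components are smaller still), so $S_k\to\lambda^k/k!$. The proof then finishes through the Bonferroni/Galambos sandwich of the first paragraph.

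The main obstacle is the family of $k$-tuples whose index pairs merely share a single coordinate: these form an $n^{3/2}$-sized, not $\cc{O}(n)$-sized, collection, and the corresponding entries are dependent through a common whitening vector even though uncorrelated, so the usual bounded-dependency-graph heuristics do not apply. The resolution is to exploit \emph{conditional} independence given the shared coordinate together with the concentration of the induced scale $\norm{\cdot}{}^2/w$ around $1$; controlling the latter at the moderate-deviation level $\zeta_{\pi_0,p,w}^2\asymp\log p$ is exactly what forces — and is guaranteed by — the relaxed requirement $\log^3 p=o(w)$. A secondary technical point is the uniformity of the Cramér expansion over the continuum of correlations $\rho_{rs}\in\brac{-r_{\max},r_{\max}}$ and over the finitely many structural types of dependent pairs, which is routine once the summands are recognized as quadratic forms in boundedly many standard Gaussians with uniformly bounded coefficients.
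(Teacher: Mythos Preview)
Your strategy is sound and shares its core with the paper's: a Poisson limit for the exceedance count, driven by a uniform moderate-deviation estimate $\bb{P}(A_{rs})\sim\bb{P}(\abs{\vartheta_w}\ge\zeta)$ (the paper's Theorem~\ref{MainThmApp}, which is exactly where $\zeta^6/w\asymp\log^3 p/w\to 0$ enters) together with control of pairwise joint exceedances over a sparse dependency graph. The structural difference is that the paper does \emph{not} verify $S_k\to\lambda^k/k!$ for all $k$; it invokes Galambos' theorem (stated as Theorem~\ref{GalambosThm}) which reduces the Poisson limit to conditions on \emph{marginals and pairs only}: $\abs{\cc{E}_q}=o(q^2)$, $q\max_{rs}\bb{P}(A_{rs})=\cc{O}(1)$, and $\sum_{\text{dependent pairs}}\bb{P}(A_a\cap A_b)\to 0$. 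This buys the paper an escape from the $k\ge 3$ combinatorics, where your sketch (``multiply by $\cc{O}(n^{k-2})$ completions and $\cc{O}(n^{-1})$ per remaining event'') silently assumes a factorization that fails when the extra events are themselves $H$-adjacent to $\{a,b\}$; that gap is reparable by iterating on $H$-component size, but Galambos' packaging avoids it entirely. On the pair analysis you and the paper are close: the paper splits dependent pairs by the maximal independent subset $d\in\{\le 2,3\}$ of the four coordinates; for $d\le 2$ (your ``genuinely correlated'' family, $\cc{O}(n)$ pairs) both arguments bound via the sum, and for $d=3$ (the $\cc{O}(n^{3/2})$ family) the paper observes $\cov(E_a,E_b)=0$ and applies the quadratic-form Chernoff bound (Lemma~\ref{ChernoffBound}) to $(E_a+E_b)/\sqrt{2}$, getting $e^{-\zeta^2}\le p^{-(3+\kappa)}$. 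Your conditional-independence route for shared-coordinate pairs is a valid and illuminating alternative, but note it does not directly cover the four-distinct-index, single-$\cc{G}^{\paren{t}}$-edge pairs that also have $d=3$ and vanishing covariance; for those you still need the sum trick. One small correction: the MGF bound $\bb{E}\exp\paren{c\zeta^2(\LpNorm{\cdot}{2}^2/w-1)}=1+o(1)$ needs only $\zeta^4/w\to 0$; the cubic power $\log^3 p=o(w)$ is forced by the Cram\'er error $\cc{O}(\zeta^3/\sqrt{w})$ in the marginal tail, not by the coupling step.
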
  

\begin{rem}
Theorem \ref{Thm1} replaces the restriction on $w$ in Theorem \ref{thm0} with the requirement that $\log^3 p = o\paren{w}$, which is suitable for many real-world settings. For example, $w$ is now of the order $100$ for a GGM comprising of $p=100$ nodes. Note that the sparsity of $\TimeDepMatrix{\Omega}{t}$ implies that 
\begin{itemize}[leftmargin=*]
\item $\boldsymbol{E}_{rs} \eqd \vartheta_w$ for the majority of $\paren{r,s}\in\cc{K}_p$.
\item Asymptotically, a portion of the order $\cc{O}\paren{p^{-1}d_{\max}}$ of distinct pairs of edges $\paren{\boldsymbol{E}_{rs}, \boldsymbol{E}_{r's'}}$ are dependent.
\end{itemize}
Thus, under some regularity conditions, the distribution of $\LpNorm{\boldsymbol{E}_{t, w}}{\infty}$ is close to that of the maximum of $\abs{\cc{K}_p} = {p+1 \choose 2}$ i.i.d. random variables distributed as $\abs{\vartheta_w}$. This fact qualitatively justifies the formulation of $\zeta_{\pi_0, p, w}$ in Eq. \eqref{zeta}.
\end{rem}

\begin{rem}
In contrast to the proof of Theorem \ref{thm0}, we do not approximate the elements in $\set{\abs{\boldsymbol{E}_{rs}}:\; \paren{r,s}\in\cc{K}_p}$ by centered Gaussian random variables, with the same dependence structure, in Theorem \ref{Thm1}. Instead, we characterizes the limiting distribution of the extreme value of dependent random variables $\set{\abs{\boldsymbol{E}_{rs}}:\; \paren{r,s}\in\cc{K}_p}$ in a \emph{direct} fashion. The technical challenge is that unlike time series, the elements of $\cc{K}_p$ do not exhibit \emph{any natural ordering}. The upshot is that we can not directly utilize classical results on the extreme values of time series (see e.g., \cite{leadbetter1988extremal}). 

Another strategy could be similar to the one adopted in Cai et al. \cite{cai2014two}, wherein the \emph{Bonferroni inequality} was used (together with similar conditions as in Assumption \ref{AssuCLT}) to simultaneously control the asymptotic distribution of the maximum of \emph{unordered} dependent Gaussian random variables from above and below (see the proof of Lemma $6$ in \cite{cai2013two} for further details). From a theoretical standpoint, the framework in Theorem \ref{Thm1} is much more challenging that univariate correlated Gaussian random variables. First, the difficulty of working with a large number of joint probability terms, which is a key disadvantage of the Bonferroni inequality, is exacerbated in our case, since $\abs{\cc{K}_p} = \cc{O}\paren{p^2}$ and its elements move in two directions (rows and columns of $\TimeDepMatrix{\Omega}{t}$). Further, the marginal and joint distributions of elements in $\boldsymbol{E}_{t,w}$ are significantly more complicated than the Gaussian case in Lemma $6$ in \cite{cai2013two}. 

The upshot of the previous discussion is that a new proof strategy is needed to establish the results in Theorem \ref{Thm1}. To that end, we leverage \emph{Galambos' Theorem} (see e.g. \cite{galambos1988variants, galambos1972distribution}, as stated for completeness in Theorem \ref{GalambosThm}). The latter is specifically designed for finding the asymptotic distribution of the maximum of \emph{graph-dependent} random variables. The proof of Galambos' result is based on a generalized and more flexible version of the inclusion-exclusion principle by Renyi \cite{renyi1976general}.
\end{rem}

\begin{rem}
A careful reading of the proof of Theorem \ref{Thm1} reveals that it can be extended to the case that $d_{\max} = \cc{O}\paren{p^{\kappa}}$ for some $\kappa\in\paren{0,1}$. However, the assumption of fixed $d_{\max}$ improves the readability of our technical contribution without focusing on unnecessary cumbersome technicalities in the asymptotics.  
\end{rem}

\begin{rem} (Asymptotic behaviour of $\zeta_{\pi_0, p, w}$) Lemma \ref{ChernoffBound} guarantees the existence of a bounded scalar $C$ for which
\begin{equation*}
\bb{P}\paren{ \abs{\vartheta_w} \geq \zeta_{\pi_0, p, w}} \leq Ce^{-\frac{\zeta^2_{\pi_0, p, w}}{2}}.
\end{equation*}
The definition of $\zeta_{\pi_0, p, w}$ in Eq. \eqref{zeta} also implies that $\bb{P}\paren{ \abs{\vartheta_w} \geq \zeta_{\pi_0, p, w}} = \cc{O}\paren{p^{-2}}$. Combining these two facts yields that $\zeta_{\pi_0, p, w} = \cc{O}\paren{\sqrt{\log p}}$, or equivalently $\zeta^6_{\pi_0, p, w} = \cc{O}\paren{\log^3 p}$, as $p, w\rightarrow\infty$. According to Corollary \ref{GaussInnerProdUppBnd}, if $\zeta^6_{\pi_0, p, w} \asymp \log^3p = o\paren{w}$ when $p,w\rightarrow\infty$, then 
\begin{equation*}
\frac{2}{p\paren{p+1}}\log\paren{\frac{1}{1-\pi_0}} = \bb{P}\paren{\abs{\vartheta_w}\geq \zeta_{\pi_0,p,w}} \sim \int_{\abs{x} \geq \zeta_{\pi_0,p,w}} \frac{e^{-x^2/2}}{\sqrt{2\pi}} dx \sim \frac{\exp\paren{-\frac{\zeta^2_{\pi_0,p,w}}{2}}}{\zeta_{\pi_0,p,w}}\sqrt{\frac{2}{\pi}}.
\end{equation*}
Rearranging the terms in the both sides, yields
\begin{equation*}
\zeta^2_{\pi_0, p, w} = 2\log{p+1 \choose 2}-\log\log{p+1 \choose 2} -2\log\brac{\sqrt{\pi}\log\paren{\frac{1}{1-\pi_0}}} +o\paren{1},
\end{equation*}
which is exactly the same as the posited expression for $\zeta_{\pi_0, p, w}$ in Eq. \eqref{zeta2}. So, our novel proof technique successfully establishes the desired result, under a weaker condition on $w$. 
\end{rem}

\begin{figure}
\includegraphics[scale=0.6]{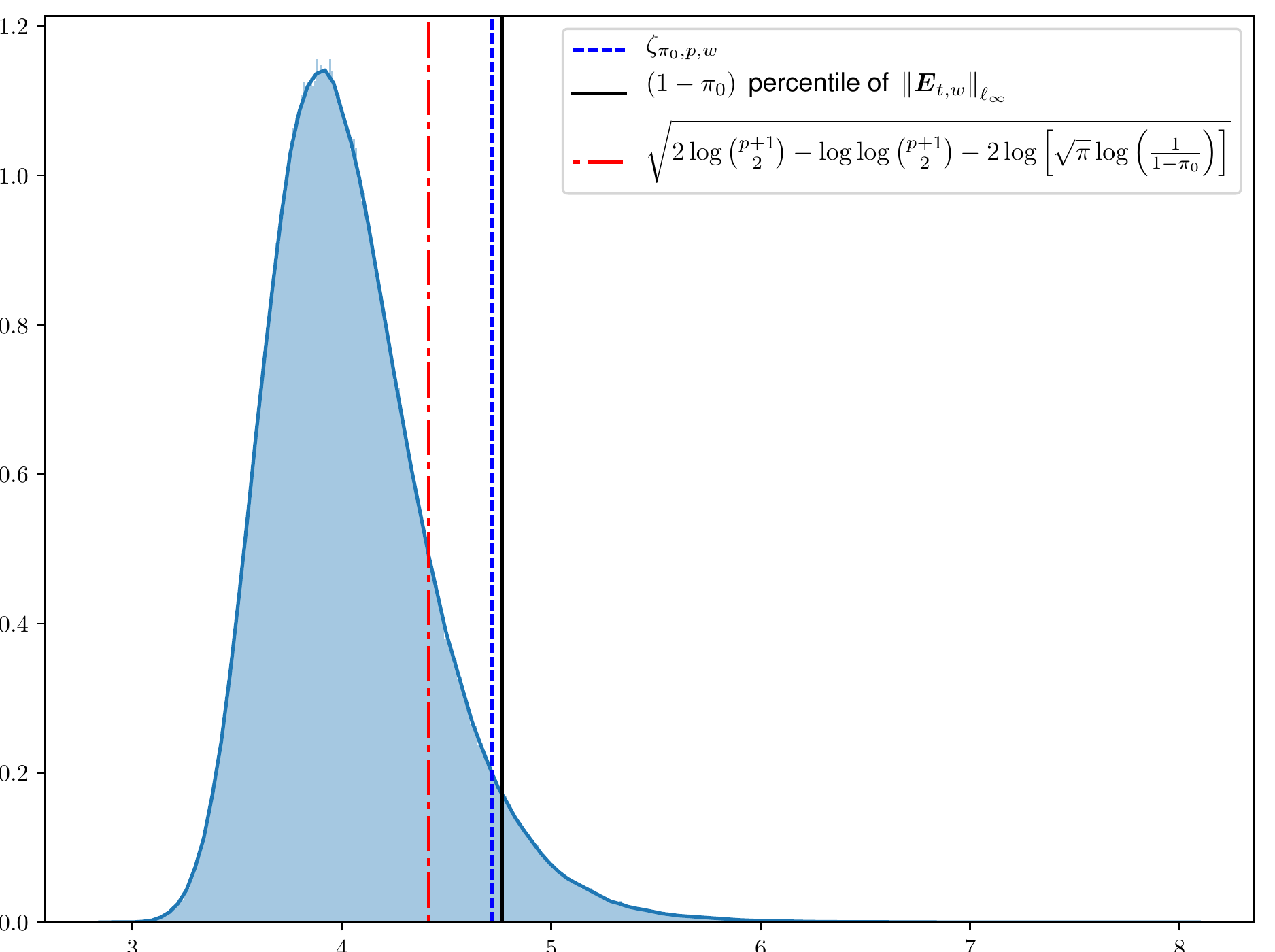}
\caption{Histogram of $\LpNorm{\boldsymbol{E}_{t, w}}{\infty}$ over $10^6$ independent experiments. The solid black, blue dashed and red dashed lines respectively show $\paren{1-\pi_0}$ quantile of the histogram, the threshold calculated by Theorems \ref{Thm1} and \ref{thm0}.}
\label{fig:imagea}
\end{figure}

Although the results in both Theorems \ref{thm0} and \ref{Thm1} yield asymptotically the same critical value, the analysis does not reveal all nuances for moderate-sized GGMs. To that end, we conclude this section by presenting a numerical experiment. We consider a stationary GGM with $p=100$ vertices (corresponding to $\bb{H}_{0,t}$), whose precision matrix is given by
\begin{equation*}
\TimeDepMatrix{\Omega}{t} = \begin{bmatrix} 
1 & \rho_0 & 0 & \ldots & 0 \\
\rho_0 & 1 & \rho_0 & \ldots & 0\\
0 & \ldots & \ddots & 1 & \rho_0 \\
0 & \ldots & 0 & \rho_0 & 1
\end{bmatrix},\quad\forall\; t.
\end{equation*}
Roughly around $2\%$ of the nodes are connected together. We choose $\rho_0 = 0.5$, $w=50$, and $\pi_0 = 0.05$. Figure \ref{fig:imagea} shows the histogram of $\LpNorm{\boldsymbol{E}_{t, w}}{\infty}$ over $10^6$ independent replicates The solid black line indicates the $\paren{1-\pi_0}$-quantile of the histogram, whereas the blue and red dashed lines show the corresponding critical values calculated by Theorems \ref{Thm1} and \ref{thm0}, respectively. As it can be seen from Figure \ref{fig:imagea}, $\zeta_{\pi_0, p, w}$ obtained by the direct analysis in Theorem \ref{Thm1} is markedly closer to the actual critical value than the Gaussian approximation approach. This experiment illustrates that our proof technique not only needs weaker conditions, it can also reduce the false alarm rate for moderate-sized GGMs.

\subsection{Distribution of $T_t$ under $\bb{H}_{1,t}$}\label{Section3.2}

This section focuses on the behavior of the proposed test for detecting abrupt changes (corresponding to the alternative hypothesis $\bb{H}_{1,t}$). Our objective is to introduce sufficient conditions under which the mis-detection rate is guaranteed to diminish asymptotically. We also assess the efficiency of our proposed change point detection algorithm by comparing the obtained asymptotic results with existing approaches. Throughout this section, we refer to the asymmetric matrix $\TimeDepMatrix{\Delta}{t}$ by
\begin{equation}\label{Delta}
\TimeDepMatrix{\Delta}{t} \coloneqq \paren{\TimeDepMatrix{\Omega}{t}\TimeDepMatrix{\Sigma}{t+1}\TimeDepMatrix{\Omega}{t}-\TimeDepMatrix{\Omega}{t}}\circ \brac{ \paren{\TimeDepMatrix{\Omega}{t}_{uu}\TimeDepMatrix{\Omega}{t}_{vv} + \paren{\TimeDepMatrix{\Omega}{t}_{uv}}^2}^{-1/2} }^p_{u,v=1}.
\end{equation}
Recall $\boldsymbol{E}_{t,w}$ and $T_t$ from Eq. \eqref{Sbartw} and \eqref{ProposedTest}. It is easy to see that, $\TimeDepMatrix{\Delta}{t}$ denotes the expected value of $\boldsymbol{E}_{t,w}$ under $\bb{H}_{1,t}$. Intuitively, $\TimeDepMatrix{\Delta}{t}$ captures the change point signal at time $t$. It is worth mentioning that as $\TimeDepMatrix{\Delta}{t} = \zero_{p\times p}$, when there is no sudden change at time $t$, the asymptotic detection guarantees will be encoded in terms of some norm of $\TimeDepMatrix{\Delta}{t}$. This qualitative claim is formalized in the following result. 

\begin{thm}\label{Thm2}
Consider the asymptotic scenario $p, w\rightarrow\infty$ with the following conditions:
\begin{enumerate}[label = (\alph*),leftmargin=*]
\item $\TimeDepMatrix{\Omega}{t}$ satisfies Assumption \ref{AssuCLT}.
\item $w^{-1}\log p \rightarrow 0$.
\end{enumerate}
For any strictly positive $\xi$, there is a bounded scalar $C_{\xi}$ such that 
\begin{equation*}
\bb{P}_{\MD}\paren{T_t} \leq p^{-\xi},
\end{equation*}
whenever
\begin{equation}\label{SuffDetectCond}
\LpNorm{\TimeDepMatrix{\Delta}{t}}{\infty} \geq \sqrt{\frac{\zeta^2_{\pi_0, p, w}}{w}} +C_{\xi}\frac{d^2_{\max}}{\alpha_{\min}}\sqrt{\frac{\log p}{w}}.
\end{equation}	
\end{thm}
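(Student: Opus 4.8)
The plan is to reduce the mis-detection event to a tail bound for the supremum norm of a centered random matrix. Write $\boldsymbol{E}_{t,w} = \TimeDepMatrix{\Delta}{t} + \boldsymbol{Z}_{t,w}$, where under $\bb{H}_{1,t}$ the matrix $\boldsymbol{Z}_{t,w} \coloneqq \boldsymbol{E}_{t,w} - \bb{E}(\boldsymbol{E}_{t,w}\mid\bb{H}_{1,t})$ is a centered, normalized sum of $w$ i.i.d.\ terms. By the reverse triangle inequality for the $\ell_\infty$ norm,
\begin{equation*}
\LpNorm{\boldsymbol{E}_{t,w}}{\infty} \geq \LpNorm{\TimeDepMatrix{\Delta}{t}}{\infty} - \LpNorm{\boldsymbol{Z}_{t,w}}{\infty},
\end{equation*}
so the test fails to fire, $T_t = 0$, only if $\LpNorm{\boldsymbol{Z}_{t,w}}{\infty} \geq \LpNorm{\TimeDepMatrix{\Delta}{t}}{\infty} - \zeta_{\pi_0,p,w}$. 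Under the detection condition \eqref{SuffDetectCond}, $\LpNorm{\TimeDepMatrix{\Delta}{t}}{\infty} - \zeta_{\pi_0,p,w} \geq \LpNorm{\TimeDepMatrix{\Delta}{t}}{\infty} - \sqrt{\zeta^2_{\pi_0,p,w}/w}\cdot\sqrt{w}$; I should be slightly more careful here, since $\zeta_{\pi_0,p,w}$ and $\sqrt{w}\cdot\sqrt{\zeta^2_{\pi_0,p,w}/w}$ coincide, so the first term on the right of \eqref{SuffDetectCond} exactly cancels $\zeta_{\pi_0,p,w}$, leaving
\begin{equation*}
\bb{P}_{\MD}(T_t) \leq \bb{P}\paren{\LpNorm{\boldsymbol{Z}_{t,w}}{\infty} \geq C_{\xi}\frac{d^2_{\max}}{\alpha_{\min}}\sqrt{\frac{\log p}{w}}}.
\end{equation*}

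The core of the argument is therefore a uniform deviation bound: with probability at least $1-p^{-\xi}$, every entry $(\boldsymbol{Z}_{t,w})_{rs}$ is at most $C_\xi (d_{\max}^2/\alpha_{\min})\sqrt{\log p / w}$ in absolute value. Each entry is a normalized sum of $w$ i.i.d.\ sub-exponential random variables of the form $(\boldsymbol{Y}_{t+r}\boldsymbol{Y}_{t+r}^\top - \bb{E}[\cdot])_{uv}$ divided by the variance-stabilizing factor $(\TimeDepMatrix{\Omega}{t}_{uu}\TimeDepMatrix{\Omega}{t}_{vv} + (\TimeDepMatrix{\Omega}{t}_{uv})^2)^{1/2}$; under $\bb{H}_{1,t}$ these $\boldsymbol{Y}$'s are Gaussian with covariance $\TimeDepMatrix{\Omega}{t}\TimeDepMatrix{\Sigma}{t+1}\TimeDepMatrix{\Omega}{t}$ rather than $\TimeDepMatrix{\Omega}{t}$, so I first need to bound the sub-exponential (Orlicz $\psi_1$) norm of each summand in terms of the spectral parameters. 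Here the normalization against the \emph{pre-change} variance, combined with $\lambda_{\min}(\TimeDepMatrix{\Omega}{t})\geq\alpha_{\min}$ and the degree bound $d_{\max}$ (which, via Assumption \ref{AssuCLT}, controls $\OpNorm{\TimeDepMatrix{\Omega}{t}}{1}{1}$ and hence the operator norm of $\TimeDepMatrix{\Omega}{t}\TimeDepMatrix{\Sigma}{t+1}\TimeDepMatrix{\Omega}{t}$ relative to the diagonal scaling), produces the $d_{\max}^2/\alpha_{\min}$ prefactor. Then Bernstein's inequality for sub-exponential sums gives, for each fixed $(r,s)$,
\begin{equation*}
\bb{P}\paren{\abs{(\boldsymbol{Z}_{t,w})_{rs}} \geq \tau} \leq 2\exp\paren{-c\,w\min\set{\frac{\tau^2}{K^2},\frac{\tau}{K}}},
\end{equation*}
with $K \lesssim d_{\max}^2/\alpha_{\min}$. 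A union bound over the $|\cc{K}_p| = \binom{p+1}{2} \lesssim p^2$ entries, together with the choice $\tau = C_\xi (d_{\max}^2/\alpha_{\min})\sqrt{\log p / w}$ for $C_\xi$ large enough (and the regularity $\log p = o(w)$, which puts us in the sub-Gaussian regime of Bernstein so that $\tau^2/K^2 \leq \tau/K$), yields the bound $p^{-\xi}$ after absorbing constants.

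The main obstacle I anticipate is the careful bookkeeping in the sub-exponential norm estimate under $\bb{H}_{1,t}$: unlike the null case, the summands involve the post-change covariance $\TimeDepMatrix{\Sigma}{t+1}$ sandwiched by $\TimeDepMatrix{\Omega}{t}$, and I must argue that dividing by the pre-change variance factor still yields summands whose $\psi_1$ norm is controlled by $d_{\max}^2/\alpha_{\min}$ \emph{without} any assumption on $\TimeDepMatrix{\Omega}{t+1}$ beyond what is implicit in $\TimeDepMatrix{\Delta}{t}$ being finite. Concretely, I would bound $\abs{(\TimeDepMatrix{\Omega}{t}\TimeDepMatrix{\Sigma}{t+1}\TimeDepMatrix{\Omega}{t})_{uv}}$ via the decomposition $\TimeDepMatrix{\Omega}{t}\TimeDepMatrix{\Sigma}{t+1}\TimeDepMatrix{\Omega}{t} = \TimeDepMatrix{\Omega}{t} + \TimeDepMatrix{\Omega}{t}(\TimeDepMatrix{\Sigma}{t+1}-\TimeDepMatrix{\Sigma}{t})\TimeDepMatrix{\Omega}{t}$ already displayed in Section \ref{Methodology}, so that the off-diagonal growth of the effective covariance is itself governed by $\LpNorm{\TimeDepMatrix{\Delta}{t}}{\infty}$; since \eqref{SuffDetectCond} only provides a lower bound on $\LpNorm{\TimeDepMatrix{\Delta}{t}}{\infty}$, I would need either a mild a priori upper bound or to observe that the relevant Gaussian quadratic form's variance scales with the diagonal of the effective covariance, which is $\TimeDepMatrix{\Omega}{t}_{uu} + (\TimeDepMatrix{\Omega}{t}(\TimeDepMatrix{\Sigma}{t+1}-\TimeDepMatrix{\Sigma}{t})\TimeDepMatrix{\Omega}{t})_{uu}$ and hence bounded by $O(d_{\max}^2/\alpha_{\min})$ times $\TimeDepMatrix{\Omega}{t}_{uu}$ whenever the signal-to-noise quantities are of the stated order. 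The remaining steps — reverse triangle inequality, the exact cancellation with $\zeta_{\pi_0,p,w}$, Bernstein, and the union bound — are routine and follow the template already used (for the null case) in the proof of Theorem \ref{Thm1}.
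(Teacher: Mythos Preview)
Your overall architecture---reverse triangle inequality, cancel $\zeta_{\pi_0,p,w}$ against the first term of \eqref{SuffDetectCond}, then concentrate the centered part---is exactly the paper's. Two things need fixing.

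\medskip
\textbf{Normalization.} Under $\bb{H}_{1,t}$ the mean is $\bb{E}\paren{\boldsymbol{E}_{t,w}\mid\bb{H}_{1,t}}=\sqrt{w}\,\TimeDepMatrix{\Delta}{t}$, not $\TimeDepMatrix{\Delta}{t}$; this follows immediately from the $1/\sqrt{w}$ in \eqref{Sbartw}. With the correct $\sqrt{w}$ in place the cancellation you describe becomes $\sqrt{w}\LpNorm{\TimeDepMatrix{\Delta}{t}}{\infty}-\zeta_{\pi_0,p,w}\geq C_\xi(d_{\max}^2/\alpha_{\min})\sqrt{\log p}$, so the concentration target for $\LpNorm{\boldsymbol{Z}_{t,w}}{\infty}$ is of order $\sqrt{\log p}$, not $\sqrt{\log p/w}$. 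Your Bernstein display has the same slip (you applied the ``sum'' form while $\boldsymbol{Z}_{t,w}$ already carries a $1/\sqrt{w}$); once you track the scaling consistently, the exponent is $-c\min\{\tau^2/K^2,\sqrt{w}\,\tau/K\}$ and $\tau\asymp K\sqrt{\log p}$ gives the desired $p^{-\xi}$ after the union bound.

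\medskip
\textbf{The concentration step.} Your direct Bernstein-on-entries route is viable, but the ``main obstacle'' you flag---bounding the $\psi_1$ norm of $(Y_uY_v-\bb{E})$ under the post-change covariance $\TimeDepMatrix{\Omega}{t}\TimeDepMatrix{\Sigma}{t+1}\TimeDepMatrix{\Omega}{t}$---is sidestepped entirely in the paper by a factoring trick. Write
\begin{equation*}
\LpNorm{\boldsymbol{E}_{t,w}-\bb{E}\boldsymbol{E}_{t,w}}{\infty}\leq \OpNorm{\TimeDepMatrix{\Omega}{t}}{\infty}{\infty}^{2}\,\LpNorm{\frac{1}{\sqrt{w}}\sum_{r=1}^{w}\paren{\boldsymbol{X}_{t+r}\boldsymbol{X}_{t+r}^\top-\TimeDepMatrix{\Sigma}{t+1}}}{\infty},
\end{equation*}
then use $\OpNorm{\TimeDepMatrix{\Omega}{t}}{\infty}{\infty}\leq d_{\max}$ (Assumption \ref{AssuCLT} with unit diagonal) and the standard sample-covariance deviation bound (Lemma \ref{SampleCovarSupNormUppBnd}) on the raw $\boldsymbol{X}$'s, for which only $\max_i \TimeDepMatrix{\Sigma}{t+1}_{ii}\leq 1/\alpha_{\min}$ is needed. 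This produces the $d_{\max}^2/\alpha_{\min}$ prefactor directly and makes no reference to $\LpNorm{\TimeDepMatrix{\Delta}{t}}{\infty}$, so the circularity you were worried about never arises. Your entrywise-Bernstein argument can be pushed through (bounding $\paren{\TimeDepMatrix{\Omega}{t}\TimeDepMatrix{\Sigma}{t+1}\TimeDepMatrix{\Omega}{t}}_{uu}\leq \LpNorm{\TimeDepMatrix{\Omega}{t}\boldsymbol{e}_u}{2}^2/\alpha_{\min}\leq d_{\max}/\alpha_{\min}$), but the paper's factoring is shorter and cleaner.
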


Theorem \ref{Thm2} introduces sufficient conditions on the detection delay $w$, and $\LpNorm{\TimeDepMatrix{\Delta}{t}}{\infty}$ for controlling the mis-detection rate from above. Particularly, we formulate an asymptotic setting under which $\bb{P}_{\MD}\paren{T_t}$ converges to zero at a polynomial rate in $p$. Comparing the conditions in Theorems \ref{Thm1} and \ref{Thm2} reveals that studying $\bb{P}_{\MD}\paren{T_t}$ requires a less restrictive asymptotic framework than the false alarm rate. Unlike Theorem \ref{Thm1}, the intent of Theorem \ref{Thm2} is not to find the exact asymptotic distribution of $T_t$ under $\bb{H}_{1,t}$. Indeed, it solely focuses on obtaining a sharp sufficient condition for controlling $\bb{P}\paren{T_t = 0 \mid \bb{H}_{1,t}}$ from above. 

\begin{rem}
Although the sufficient condition on $\TimeDepMatrix{\Delta}{t}$ in Theorem \ref{Thm2} looks somewhat involved, Assumption \ref{AssuCLT} helps us write a simpler, more intuitive detection criterion. Without loss of generality, we further assume that all the diagonal entries of $\TimeDepMatrix{\Omega}{t}$ are equal to one, as $T_t$ is standardized in the no-change setting. In this case, $\TimeDepMatrix{\Delta}{t}$ can be rewritten in the following form.
\begin{equation*}
\TimeDepMatrix{\Delta}{t} = -\TimeDepMatrix{\Omega}{t}\TimeDepMatrix{\Sigma}{t+1} \paren{\TimeDepMatrix{\Omega}{t+1}-\TimeDepMatrix{\Omega}{t}}\circ \brac{ \sqrt{\frac{1}{1 + \paren{\TimeDepMatrix{\Omega}{t}_{uv}}^2}} }^p_{u,v=1}
\end{equation*}
Assumption \ref{AssuCLT} also ensures the boundedness of $\OpNorm{\TimeDepMatrix{\Omega}{t}\TimeDepMatrix{\Sigma}{t+1}}{\infty}{\infty}$ as $p\rightarrow\infty$. Thus, under the same conditions as in Theorem \ref{Thm1}, $\bb{P}_{\MD}\paren{T_t} \leq p^{-\xi}$, if
\begin{equation*}
\max_{1\leq u\leq v\leq p} \frac{\abs{\TimeDepMatrix{\Omega}{t+1}_{uv}-\TimeDepMatrix{\Omega}{t}_{uv}}}{\sqrt{{1 + \paren{\TimeDepMatrix{\Omega}{t}_{uv}}^2}}} \geq C'\sqrt{\frac{\log p + \zeta^2_{\pi_0, p, w}}{w}} \asymp \sqrt{\frac{\log p}{w}},
\end{equation*}	
where $C'$ is a bounded scalar depending on $\xi, d_{\max},$ and $\alpha_{\min}$. The new sufficient condition is based on $\ell_\infty$ norm of the difference between the pre- and post-change precision matrices.
\end{rem}

Next, we explore the sufficient condition \eqref{SuffDetectCond} in selected scenarios and compare it to the detection condition used in the test by Keshavarz et al. \cite{keshavarz2018sequential}. Note that the sequential algorithm in \cite{keshavarz2018sequential} is designed to detect changes affecting many edges of the GGM.

\begin{enumerate}[label = (\alph*),leftmargin=*]
\item Uniform change in $\TimeDepMatrix{\Omega}{t}$: Suppose that there exists $\beta > -1$ such that $\TimeDepMatrix{\Omega}{t+1} = \TimeDepMatrix{\Omega}{t}/\paren{1+\beta}$. Simply put, all the edges are affected the same way by the sudden change. In this case, 
\begin{equation*}
\LpNorm{\TimeDepMatrix{\Delta}{t}}{\infty} = \abs{\beta}\max_{1\leq u\leq v\leq p}\frac{\abs{\TimeDepMatrix{R}{t}_{uv}} }{\sqrt{1 + \paren{\TimeDepMatrix{R}{t}_{uv}}^2}} = \frac{\abs{\beta}}{\sqrt{2}}.
\end{equation*}
Thus, $T_t$ can detect any $\beta$ satisfying $\abs{\beta}\geq C\sqrt{\frac{\log p}{w}}$ (for some large enough scalar $C$) with high probability. In contrast, the procedure in \cite{keshavarz2018sequential}, which is obtained by applying a convex barrier function on the diagonal entries of $\TimeDepMatrix{\Delta}{t}$, detects a change point, whenever
\begin{equation*}
\beta - \log\paren{\beta+1}\geq C'\sqrt{\frac{\log p}{pw^2}},
\end{equation*}
for a bounded scalar $C'$. One can verify that the proposed algorithm in \cite{keshavarz2018sequential} outperforms $T_t$ and the gap between these two approaches increases as $p$ grows. For example, setting $w = \cc{O}\paren{\log p}$ yields a $\beta = \cc{O}\paren{\paren{p\log p}^{-1/4}}$ that is detectable by that algorithm, which is not the case with $T_t$. The main reason is that the test in \cite{keshavarz2018sequential} is designed for a global (albeit weak) change in the GGM, and this combines/aggregates the (possibly weak) signal across all edges, thus making it more suitable for detecting such uniform changes.
\item Change in a small sub-graph: In this case, the change point only affects edges related to a subset of nodes $\cc{S}\subset\set{1,\ldots, p}$. Particularly, we assume $\boldsymbol{\Theta} = \TimeDepMatrix{\Omega}{t+1}-\TimeDepMatrix{\Omega}{t}$ satisfies the following conditions:
\begin{align}\label{eq1}
& \supp\paren{\boldsymbol{\Theta}}\subset \cc{S}\times \cc{S},\nonumber\\
&\exists\;\xi\in\paren{0,1}\;\;\suchthat\;\; \OpNorm{\paren{\TimeDepMatrix{\Omega}{t}}^{-1/2}\boldsymbol{\Theta}\paren{\TimeDepMatrix{\Omega}{t}}^{-1/2}}{2}{2} \leq \xi < 1.
\end{align}
The second condition in Eq. \eqref{eq1} roughly indicates the presence of a weak change signal, compared to the background ($\TimeDepMatrix{\Omega}{t}$). Such a restriction on 
$\boldsymbol{\Theta}$ is realistic, due to its support constraint. Without loss of generality, we also assume that $\TimeDepMatrix{\Omega}{t}$ has unit diagonal entries. The algorithm in \cite{keshavarz2018sequential} detects a change point under this setting, if 
\begin{equation}\label{eq2}
-\frac{\sum_{s\in\cc{S}} \boldsymbol{\Theta}_{ss}}{\paren{1+\xi}p}-\log\paren{1-\frac{\sum_{s\in\cc{S}} \boldsymbol{\Theta}_{ss}}{\paren{1+\xi}p}} \geq C\sqrt{\frac{\log p}{pw^2}}.
\end{equation}
for a $C < \infty$. Note that asymptotically, wherein both $p$ and $\abs{\cc{S}}$ grow with $\abs{\cc{S}} = o\paren{p^\alpha}$ for some $\alpha\in\paren{0,\frac{3}{4}}$, the detection condition \eqref{eq2} does not hold if
\begin{equation*}
p^{\alpha-1} \LpNorm{\boldsymbol{\Theta}}{\infty} = o\paren{\sqrt[4]{\frac{\log p}{pw^2}}} \;\; \Longleftrightarrow\;\; \lambda_{w,p,\boldsymbol{\Theta}} \coloneqq \LpNorm{\boldsymbol{\Theta}}{\infty}\sqrt{\frac{w}{\log p}} = o\paren{\sqrt[4]{\frac{p^{3-4\alpha}}{\log p}}}.
\end{equation*}
In contrast, $T_t$ can detect an abrupt change satisfying  $\lambda_{w,p,\boldsymbol{\Theta}}=\cc{O}\paren{1}$, which is a considerably weaker restriction on $\lambda_{w,p,\boldsymbol{\Theta}}$. Namely, our proposed algorithm is more suitable for detecting \emph{localized} changes confined to small sub-graphs.

\end{enumerate}

\section{Asymptotic analysis of $\hat{T}_t$}\label{Section4}

Next, we study asymptotic properties of $\hat{T}_t$, introduced in Eq. \eqref{PluginProposedTest}. We demonstrate that $\hat{T}_t$, which is based on the plug-in statistic $\boldsymbol{\hat{E}}_{t,w}$ \eqref{PlugnStatistic}, (asymptotically) performs as well as the oracle test $T_t$ under mild regularity conditions. The analysis of $\hat{T}_t$ relies on certain large-sample properties of the error matrix $\TimeDepMatrix{\Pi}{t} = \TimeDepMatrix{\hat{\Omega}}{t}-\TimeDepMatrix{\Omega}{t}$. In particular, sharp bounds on $\OpNorm{\cdot}{\infty}{\infty}$ and $\LpNorm{\cdot}{\infty}$ norms of  $\TimeDepMatrix{\Pi}{t}$ in terms $p, d_{\max}$ are required, together with the number of samples since the last change point $N$. Such theoretical results are available for most computationally and statistically efficient sparse precision matrix estimation methods, such as the CLIME algorithm \cite{cai2011constrained}, or the QUIC approach \cite{hsieh2014quic}. 

For brevity, $\TimeDepMatrix{\hat{\Omega}}{t}$ is assumed throughout this section to be a symmetric matrix estimated by the CLIME procedure, and projected into the set of positive definite matrices. This projection is carried through by ignoring the components with negative eigenvalues in the eigen-decomposition of $\TimeDepMatrix{\hat{\Omega}}{t}$. Although the formulation of $\boldsymbol{\hat{E}}_{t,w}$ does not strictly require $\TimeDepMatrix{\hat{\Omega}}{t}$ to be positive definite, the positive definiteness is guaranteed (as well as having a bounded condition number as $p$ grows) with high probability, if $\TimeDepMatrix{\Omega}{t}$ satisfies Assumption \ref{AssuCLT}.

We begin by studying the null distribution of $\hat{T}_t$, when no abrupt change occurs between $t-N$ and $t+w-1$, i.e. $\TimeDepMatrix{\Omega}{t-N} = \ldots = \TimeDepMatrix{\Omega}{t} = \ldots = \TimeDepMatrix{\Omega}{t+w-1}$. The observed samples before $t$ ($\boldsymbol{X}_{t-i},\;i=1,\ldots, N$) are used for obtaining an estimate of the unknown ``background" precision matrix, and $w$ samples after $t-1$ for computing $\hat{T}_t$ (detection phase).

\begin{thm}\label{Thm3}
Let $\pi_0\in\paren{0,1}$. Assume that there is no change point between $t-N$ and $t+w-1$. Further, suppose that the following conditions hold as $w, p,$ and $N\rightarrow\infty$.
\begin{enumerate}[label = (\alph*),leftmargin=*]
\item $\TimeDepMatrix{\Omega}{t}$ satisfies Assumption \ref{AssuCLT}.
\item $w^{-1}\log^3 p \rightarrow 0$.
\item $N^{-1}w\log p\rightarrow 0$.
\end{enumerate}
Then,
\begin{equation*}
\bb{P}_{\FA}\paren{\hat{T}_t} = \bb{P}\paren{\LpNorm{\boldsymbol{\hat{E}}_{t, w}}{\infty}\geq \zeta_{\pi_0, p, w} } \rightarrow \pi_0.
\end{equation*} 
\end{thm}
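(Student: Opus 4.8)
The plan is to show that the plug-in statistic $\boldsymbol{\hat{E}}_{t,w}$ is uniformly close (in the $\ell_\infty$ norm) to the oracle statistic $\boldsymbol{E}_{t,w}$, so that Theorem \ref{Thm1} transfers from the oracle to the data-driven case. Concretely, I would write $\LpNorm{\boldsymbol{\hat{E}}_{t,w}}{\infty} = \LpNorm{\boldsymbol{E}_{t,w}}{\infty} + \cc{O}_{\bb{P}}(r_{N,p,w})$ for a remainder $r_{N,p,w} = o_{\bb{P}}(1/\sqrt{\log p})$, and then invoke the fact that, by Theorem \ref{Thm1}, $\LpNorm{\boldsymbol{E}_{t,w}}{\infty} - \zeta_{\pi_0,p,w}$ has a non-degenerate limiting law (an extreme-value type distribution with fluctuations of order $1/\sqrt{\log p}$ around the threshold $\zeta_{\pi_0,p,w}\asymp\sqrt{\log p}$). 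A perturbation of smaller order than these fluctuations does not change the limiting rejection probability, hence $\bb{P}(\LpNorm{\boldsymbol{\hat{E}}_{t,w}}{\infty}\ge\zeta_{\pi_0,p,w})\to\pi_0$. This is the standard ``the plug-in error is asymptotically negligible at the relevant scale'' argument; the work is entirely in controlling $r_{N,p,w}$.

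The core estimate is the decomposition of $\boldsymbol{\hat{E}}_{t,w} - \boldsymbol{E}_{t,w}$ driven by $\TimeDepMatrix{\Pi}{t} = \TimeDepMatrix{\hat{\Omega}}{t} - \TimeDepMatrix{\Omega}{t}$. There are two sources of error in \eqref{PlugnStatistic}: (i) replacing $\TimeDepMatrix{\Omega}{t}\boldsymbol{X}_{t+r}$ by $\TimeDepMatrix{\hat\Omega}{t}\boldsymbol{X}_{t+r}$ inside the rank-one terms and the centering, and (ii) replacing the variance normalizers $(\TimeDepMatrix{\Omega}{t}_{uu}\TimeDepMatrix{\Omega}{t}_{vv}+(\TimeDepMatrix{\Omega}{t}_{uv})^2)^{-1/2}$ by their hatted analogues. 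First I would expand $\TimeDepMatrix{\hat\Omega}{t}\boldsymbol{X}_{t+r}(\TimeDepMatrix{\hat\Omega}{t}\boldsymbol{X}_{t+r})^\top = \boldsymbol{Y}_{t+r}\boldsymbol{Y}_{t+r}^\top + \TimeDepMatrix{\Pi}{t}\boldsymbol{X}_{t+r}\boldsymbol{Y}_{t+r}^\top + \boldsymbol{Y}_{t+r}\boldsymbol{X}_{t+r}^\top\TimeDepMatrix{\Pi}{t} + \TimeDepMatrix{\Pi}{t}\boldsymbol{X}_{t+r}\boldsymbol{X}_{t+r}^\top\TimeDepMatrix{\Pi}{t}$, average over $r$, subtract $\TimeDepMatrix{\hat\Omega}{t}$, and bound each resulting matrix entrywise. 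The linear-in-$\Pi$ terms, after dividing by $\sqrt{w}$ and summing, contribute something of order $\OpNorm{\TimeDepMatrix{\Pi}{t}}{\infty}{\infty}$ times a maximum of $w^{-1/2}\sum_r$ of products of (sub-exponential) coordinates, which by a union bound over $p^2$ entries is $\cc{O}_{\bb{P}}(\sqrt{\log p})$ in the fluctuation part and $\cc{O}(\sqrt{w})$ in the mean part; the quadratic-in-$\Pi$ term is lower order. Here I would use the CLIME guarantees quoted in Section \ref{Section4}: $\OpNorm{\TimeDepMatrix{\Pi}{t}}{\infty}{\infty} = \cc{O}_{\bb{P}}(d_{\max}\sqrt{\log p / N})$ and $\LpNorm{\TimeDepMatrix{\Pi}{t}}{\infty} = \cc{O}_{\bb{P}}(\sqrt{\log p/N})$, valid under Assumption \ref{AssuCLT}. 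For (ii), the normalizer is a smooth bounded-away-from-zero function of the entries of $\TimeDepMatrix{\Omega}{t}$ (using Assumption \ref{AssuCLT} and the sparsity so each $\TimeDepMatrix{\Omega}{t}_{uv}$ is bounded), so its perturbation is $\cc{O}(\LpNorm{\TimeDepMatrix{\Pi}{t}}{\infty})$ entrywise; multiplying by the $\cc{O}_{\bb{P}}(\sqrt{w})$-size pre-normalization matrix shows this source is comparable.

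Putting the pieces together, $\LpNorm{\boldsymbol{\hat{E}}_{t,w} - \boldsymbol{E}_{t,w}}{\infty} = \cc{O}_{\bb{P}}\!\big(\OpNorm{\TimeDepMatrix{\Pi}{t}}{\infty}{\infty}\,(\sqrt{w}\vee\sqrt{\log p})\big) = \cc{O}_{\bb{P}}\!\big(d_{\max}\sqrt{w\log p/N}\big)$ (absorbing $\log p\lesssim w$ from condition (b)), and condition (c), $N^{-1}w\log p\to 0$, forces this to be $o_{\bb{P}}(1)$. To get the sharper statement I actually need — negligibility relative to the $1/\sqrt{\log p}$ fluctuation scale, i.e. $r_{N,p,w}\sqrt{\log p}\to 0$ in probability — I would note $r_{N,p,w}\sqrt{\log p} \asymp d_{\max}\log p\sqrt{w/N}$, which still vanishes once $N^{-1}w\log^2 p\to 0$; since condition (c) as stated gives only $N^{-1}w\log p\to 0$, I would either (a) re-examine whether the centering/mean part actually contributes only $\cc{O}(1)$ rather than $\cc{O}(\sqrt{w})$ under $\bb{H}_{0,t}$ — under the null $\bb{E}\boldsymbol{S}_{t,w}=\TimeDepMatrix{\Omega}{t}$ and the linear-in-$\Pi$ mean term collapses, leaving genuine fluctuation terms of size $\sqrt{\log p}$ so that $r_{N,p,w}\asymp d_{\max}\log p/\sqrt{N}$ and $r_{N,p,w}\sqrt{\log p}\asymp d_{\max}\log^{3/2}p/\sqrt{N}$, which under (b)+(c) combined with $d_{\max}$ fixed is indeed $o(1)$ provided $N\gg\log^3 p$, implied by $N\gg w\gg\log^3 p$ — or (b) track constants more carefully. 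The main obstacle is precisely this bookkeeping: showing the mean/bias contribution of the plug-in error is $o_{\bb{P}}(1/\sqrt{\log p})$ rather than merely $o_{\bb{P}}(1)$, which requires exploiting the null-hypothesis cancellation $\bb{E}[\boldsymbol{S}_{t,w}-\TimeDepMatrix{\Omega}{t}]=\zero$ and a careful union-bound/Bernstein argument over the $p^2$ entries for the fluctuation part, together with the observation that $\TimeDepMatrix{\Pi}{t}$ is independent of the detection-window samples $\boldsymbol{X}_{t+1},\dots,\boldsymbol{X}_{t+w}$ (it is built from $\boldsymbol{X}_{t-1},\dots,\boldsymbol{X}_{t-N}$), so conditioning on $\TimeDepMatrix{\hat\Omega}{t}$ reduces everything to Gaussian-chaos tail bounds with the estimated matrix treated as fixed.
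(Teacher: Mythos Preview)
Your plan is essentially the paper's: decompose $\hat{\boldsymbol{E}}_{t,w}-\boldsymbol{E}_{t,w}$ into pieces driven by $\TimeDepMatrix{\Pi}{t}=\TimeDepMatrix{\hat\Omega}{t}-\TimeDepMatrix{\Omega}{t}$, bound each piece in $\ell_\infty$ using the CLIME rates for $\LpNorm{\TimeDepMatrix{\Pi}{t}}{\infty}$ and $\OpNorm{\TimeDepMatrix{\Pi}{t}}{\infty}{\infty}$ together with a sup-norm bound on the centered sample covariance (Lemma~\ref{SampleCovarSupNormUppBnd}), and conclude $\LpNorm{\hat{\boldsymbol{E}}_{t,w}-\boldsymbol{E}_{t,w}}{\infty}=\cc{O}_{\bb{P}}\bigl(\sqrt{w\log p/N}\bigr)=o_{\bb{P}}(1)$ under condition (c). The paper's split into $\spadesuit,\clubsuit_1,\clubsuit_2$ is algebraically the same as your linear/quadratic-in-$\Pi$ expansion plus the normalizer perturbation, and both use independence of $\TimeDepMatrix{\Pi}{t}$ from the detection-window samples. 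One difference worth noting: where you worry about the fluctuation scale, the paper simply asserts that $\LpNorm{\hat{\boldsymbol{E}}_{t,w}-\boldsymbol{E}_{t,w}}{\infty}=o_{\bb{P}}(1)$ suffices and then invokes Theorem~\ref{Thm1}; it does not argue at the $1/\sqrt{\log p}$ level.

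Your attempt to go further and secure $o_{\bb{P}}(1/\sqrt{\log p})$ via option (a), however, does not work. Under $\bb{H}_{0,t}$ the conditional bias is
\[
\bb{E}\Bigparen{\hat{\boldsymbol{E}}_{t,w}-\boldsymbol{E}_{t,w}\,\Big\lvert\, \boldsymbol{X}_{t-1},\dots,\boldsymbol{X}_{t-N}}
=\sqrt{w}\,\Bigparen{\TimeDepMatrix{\hat\Omega}{t}\TimeDepMatrix{\Sigma}{t}\TimeDepMatrix{\hat\Omega}{t}-\TimeDepMatrix{\hat\Omega}{t}}\circ\TimeDepMatrix{\hat\Psi}{t}
=\sqrt{w}\,\Bigparen{\TimeDepMatrix{\Pi}{t}+\TimeDepMatrix{\Pi}{t}\TimeDepMatrix{\Sigma}{t}\TimeDepMatrix{\Pi}{t}}\circ\TimeDepMatrix{\hat\Psi}{t},
\]
and the first-order term $\sqrt{w}\,\TimeDepMatrix{\Pi}{t}$ does \emph{not} collapse: the two cross terms $\TimeDepMatrix{\Pi}{t}\boldsymbol{X}\boldsymbol{Y}^\top+\boldsymbol{Y}\boldsymbol{X}^\top\TimeDepMatrix{\Pi}{t}$ have conditional mean $2\TimeDepMatrix{\Pi}{t}$, while the centering difference $\TimeDepMatrix{\hat\Omega}{t}-\TimeDepMatrix{\Omega}{t}$ subtracts only one $\TimeDepMatrix{\Pi}{t}$. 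Hence the dominant contribution is genuinely $\sqrt{w}\,\LpNorm{\TimeDepMatrix{\Pi}{t}}{\infty}\asymp\sqrt{w\log p/N}$ (this is exactly the paper's $\clubsuit_2$; see also Remark~4.1), not $d_{\max}\log p/\sqrt{N}$, so your computation $r_{N,p,w}\sqrt{\log p}\asymp d_{\max}\log^{3/2}p/\sqrt{N}$ rests on a false premise. In short: your decomposition and bounds match the paper's proof; your extra care about the $1/\sqrt{\log p}$ scale is well-motivated, but option~(a) does not deliver it, and the paper's own argument stops at $o_{\bb{P}}(1)$.
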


Before proceeding further, we briefly outline the proof strategy for Theorem \ref{Thm3}. Recall that we set $\boldsymbol{Y}_{t+r} = \TimeDepMatrix{\Omega}{t}\boldsymbol{X}_{t+r}$ and $\boldsymbol{\hat{Y}}_{t+r} = \TimeDepMatrix{\hat{\Omega}}{t}\boldsymbol{X}_{t+r}$ in Section \ref{Methodology}. For ease of presentation, we also define
\begin{equation*}
\TimeDepMatrix{\Psi}{t} \coloneqq \brac{ \paren{\TimeDepMatrix{\Omega}{t}_{uu}\TimeDepMatrix{\Omega}{t}_{vv} + \paren{\TimeDepMatrix{\Omega}{t}_{uv}}^2}^{-1/2} }^p_{u,v=1},\quad\mbox{and}\quad \TimeDepMatrix{\hat{\Psi}}{t}\coloneqq \brac{ \paren{\TimeDepMatrix{\hat{\Omega}}{t}_{uu}\TimeDepMatrix{\hat{\Omega}}{t}_{vv} + \paren{\TimeDepMatrix{\hat{\Omega}}{t}_{uv}}^2}^{-1/2} }^p_{u,v=1}.
\end{equation*} 
Note that $\TimeDepMatrix{\hat{\Psi}}{t}$ is a random matrix depending on $\boldsymbol{X}_{t-i},\;i=1,\ldots, N$. Since both oracle and plug-in tests have the same critical value $\zeta_{\pi_0, p, w}$, we only need to show that 
\begin{equation*}
\LpNorm{\hat{\boldsymbol{E}}_{t,w}-\boldsymbol{E}_{t,w}}{\infty} = o_{\bb{P}}\paren{1}.
\end{equation*}
By applying the triangle inequality, we decompose the desired quantity into three terms. 
\begin{eqnarray}\label{Eq9}
\LpNorm{\hat{\boldsymbol{E}}_{t,w}-\boldsymbol{E}_{t,w}}{\infty} &\leq&  \LpNorm{\TimeDepMatrix{\hat{\Psi}}{t}-\TimeDepMatrix{\Psi}{t}}{\infty}\LpNorm{\sum_{r=1}^{w} \frac{\paren{\boldsymbol{Y}_{t+r}\boldsymbol{Y}_{t+r}^\top-\TimeDepMatrix{\Omega}{t}}}{\sqrt{w}}}{\infty}+w^{1/2}\LpNorm{\TimeDepMatrix{\hat{\Omega}}{t}\TimeDepMatrix{\Sigma}{t}\TimeDepMatrix{\hat{\Omega}}{t}-\TimeDepMatrix{\Omega}{t}}{\infty}\nonumber\\
&+& w^{-1/2}\LpNorm{\sum_{r=1}^{w} \paren{\boldsymbol{\hat{Y}}_{t+r}\boldsymbol{\hat{Y}}_{t+r}^\top-\boldsymbol{Y}_{t+r}\boldsymbol{Y}_{t+r}^\top-\TimeDepMatrix{\hat{\Omega}}{t}\TimeDepMatrix{\Sigma}{t}\TimeDepMatrix{\hat{\Omega}}{t}+\TimeDepMatrix{\Omega}{t}}}{\infty}.
\end{eqnarray}
Let $\clubsuit_1, \clubsuit_2$ and $\clubsuit_3$ denote the terms on the right hand side of Eq. \eqref{Eq9}, respectively. We show that
\begin{equation*}
\LpNorm{\TimeDepMatrix{\hat{\Omega}}{t}\TimeDepMatrix{\Sigma}{t}\TimeDepMatrix{\hat{\Omega}}{t}-\TimeDepMatrix{\Omega}{t}}{\infty} = \cc{O}_{\bb{P}}\paren{\sqrt{\frac{\log p}{N}}} \;\;\Longrightarrow\;\; \clubsuit_2 = \cc{O}_{\bb{P}}\paren{\sqrt{\frac{w\log p}{N}}} = o_{\bb{P}}\paren{1}.
\end{equation*}
Note that $\clubsuit_1$ is the product of two independent terms. For controlling  $\clubsuit_1$ from above, establish
\begin{equation*}
\LpNorm{\TimeDepMatrix{\hat{\Psi}}{t}-\TimeDepMatrix{\Psi}{t}}{\infty} = \cc{O}_{\bb{P}}\paren{\sqrt{\frac{\log p}{N}}},\quad\mbox{and}\quad \LpNorm{\sum_{r=1}^{w} \frac{\paren{\boldsymbol{Y}_{t+r}\boldsymbol{Y}_{t+r}^\top-\TimeDepMatrix{\Omega}{t}}}{\sqrt{w}}}{\infty} = \cc{O}_{\bb{P}}\paren{\sqrt{\log p}}.
\end{equation*}
Hence, $\clubsuit_1=o_{\bb{P}}\paren{1}$, if $N$ grows faster than $\log^2 p$. Finally, we establish $\clubsuit_3 = \cc{O}_{\bb{P}}\paren{\frac{\log p}{\sqrt{N}}}$, which implies that $\clubsuit_3$ tends to zero in probability.

\begin{rem}
The condition on $N$ in Theorem \ref{Thm3} ($N$ grows faster than $w$) seems counter-intuitive at first glance. Controlling the bias of estimating the oracle statistic $\boldsymbol{E}_{t,w}$ by $\boldsymbol{\hat{E}}_{t,w}$ is the major reason behind this observation. In particular, it is easy to verify that
\begin{equation}\label{Eq10}
\bb{E}\paren{\boldsymbol{\hat{E}}_{t,w}-\boldsymbol{E}_{t,w} \lvert \boldsymbol{X}_{t-1},\ldots, \boldsymbol{X}_{t-N}} = \sqrt{w}\paren{\TimeDepMatrix{\hat{\Omega}}{t}\TimeDepMatrix{\Sigma}{t}\TimeDepMatrix{\hat{\Omega}}{t}-\TimeDepMatrix{\Omega}{t}}\circ \TimeDepMatrix{\Psi}{t}.
\end{equation}
According to Eq. \eqref{Eq10}, the conditional bias of $\boldsymbol{\hat{E}}_{t,w}$ is proportional to $\sqrt{w}$. So for large $w$, even a slight bias introduced by the CLIME
estimator can change the null distribution of $\hat{T}_t$, which is counterbalanced by increasing $N$.

One the other hand, Theorem \ref{Thm2} suggests that increasing $w$ improves the detection power of our proposed algorithm. Therefore, the proper choice of $w$ is determined by the trade-off between the false alarm rate and the power of $\hat{T}_t$. Hence,
choosing $w = \cc{O}\paren{\log^{3+\delta} p}$ and $N = \cc{O}\paren{\log^{4+2\delta} p}$, for a small $\delta > 0$, represents a good choice in practical settings.
\end{rem}

\begin{rem}
The fact that $d_{\max}$ remains bounded in Theorem \ref{Thm3}, despite growing $p$, helps us highlight the main contribution without any distractions from technical over-complications.
Indeed, the proof of Theorem \ref{Thm3} can be extended to the case of $d_{\max}\rightarrow\infty$ (with $p$), if the second and third conditions in Theorem \ref{Thm3} are replaced by $w^{-1}d^{c_1}_{\max}\log^3 p \rightarrow 0$ and $N^{-1}wd^{c_2}_{\max}\log p\rightarrow 0$, for two appropriately chosen positive scalars $c_1$ and $c_2$.
\end{rem}

\begin{rem}
Note that for the detection procedure in \cite{keshavarz2018sequential}, designed for settings where many edges are impacted by the presence of a change point, 
$N$ grows at a faster rate than $pw\log^2 p$. Further, in offline settings, the detection algorithms for sparse precision matrices in \cite{atchade2017scalable,ross2014introduction} also require $\cc{O}\paren{s\log p}$ (or order $p\log p$ for bounded-degree networks) samples for estimating the location of the change point with an order $\log p$ error. Such restrictions on $N$ are significantly stronger than the conditions in Theorem \ref{Thm3}. Indeed, the aforementioned algorithms rely on the \emph{Frobenius} norm consistency in estimating $\TimeDepMatrix{\Omega}{t}$, as opposed to $\OpNorm{\cdot}{\infty}{\infty}$ norm consistency required for our proposed localized detection procedure, with the former leading to a more stringent condition on the growth of $N$.
\end{rem}

\section{Performance Evaluation}\label{Synthetic}

The first numerical experiment gauges the sensitivity of the critical value $\zeta_{\pi_0, p, w}$ to the estimation error of the pre-change precision matrix. It provides insights on the behavior of $\hat{T}_t$ under a no-change scenario to the oracle test statistic $T_t$. We consider a GGM with $p=80$ nodes (its precision matrix is denoted by $\boldsymbol{\Omega}$). The entries of $\boldsymbol{\Omega}$ are generated according to (using \texttt{R fastclime} package): (i) each row has 6\% of non-zero entries (including the diagonal ones), and (ii) the precision has four hub nodes, each of them connected to $20$ other nodes at random. Furthermore, all nodes are normalized to have unit variance. To ensure positiveness, after the initial generation of $\boldsymbol{\Omega}$, its diagonal entries are inflated by $0.1$.  $N=300$ samples are used for estimating $\boldsymbol{\Omega}$ in both scenarios. The normalized estimation error, defined as 
\begin{equation}\label{NEE}
\hat{e}_N \coloneqq \LpNorm{\boldsymbol{\Omega}}{2}^{-1}\LpNorm{\hat{\boldsymbol{\Omega}}-\boldsymbol{\Omega}}{2},
\end{equation}
is equal to $0.30$ for mechanism (i) and $0.41$ for mechanism (ii). The detection delay parameter is set to $w=50$. Figure \ref{Fig:Fig2} depicts the distribution of $\LpNorm{\boldsymbol{E}_{t, w}}{\infty}$, based on $10^5$ independent replicates, under the no-change scenario for the two data generation mechanisms. Note that in each experiment, $\boldsymbol{E}_{t, w}$ is formed by $w$ i.i.d. samples drawn from a zero-mean Gaussian vector with precision matrix $\boldsymbol{\Omega}$. 
\begin{center}\label{Fig:Fig2}
\begin{figure}
\includegraphics[scale=0.5]{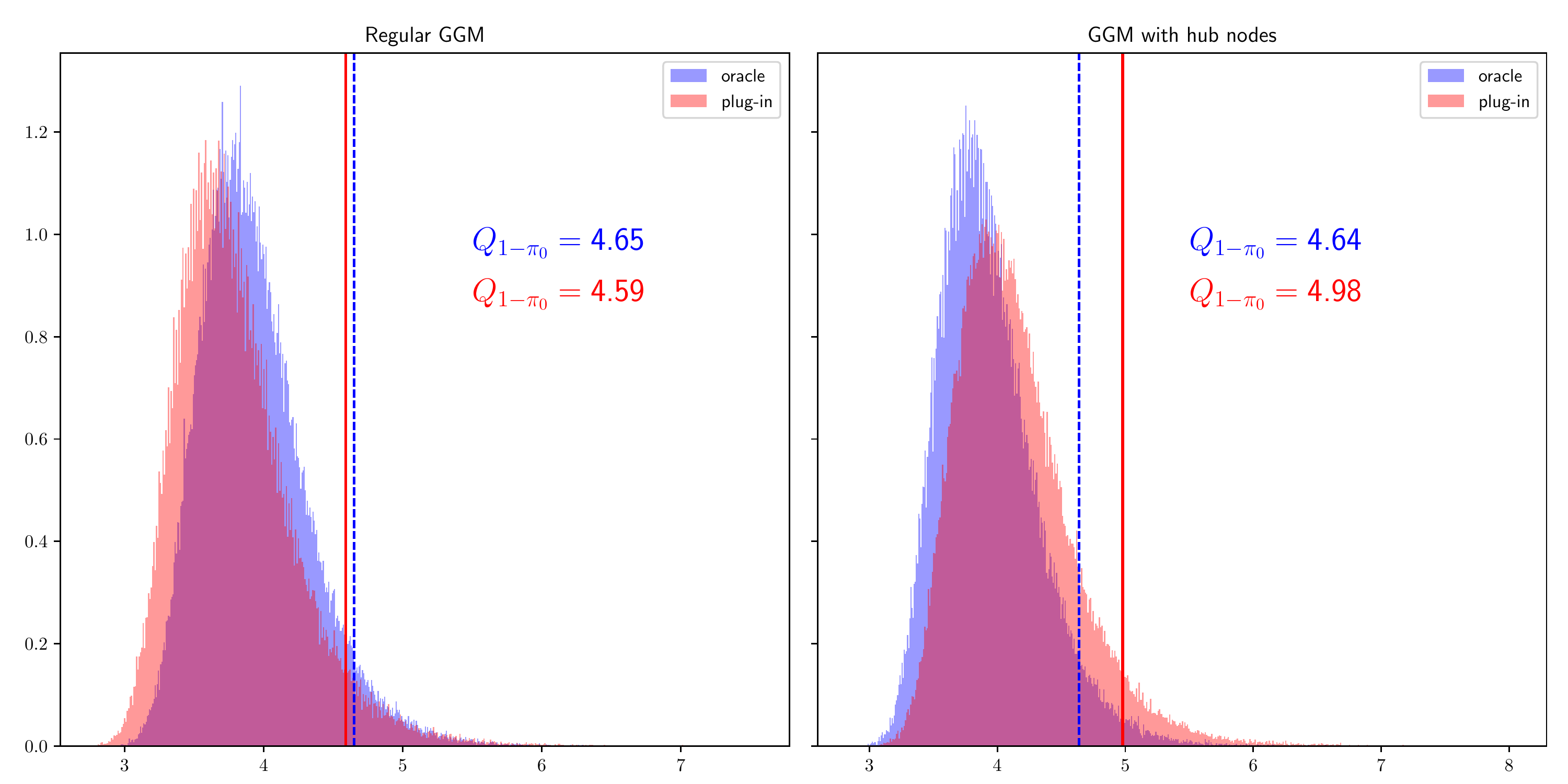}
\caption{Histogram of $\LpNorm{\boldsymbol{E}_{t, w}}{\infty}$ over $10^5$ independent replicates. The blue dashed and solid red lines correspond to the $\paren{1-\pi_0}$-quantile of the oracle and the plug-in test statistic under the null hypothesis, respectively.}
\label{fig:imageb}
\end{figure}
\end{center}
The plots in Figure \ref{Fig:Fig2} show that:
\begin{itemize}
\item Despite having a non-negligible error in estimating the pre-change precision matrix, the oracle and plug-in distributions (and hence their corresponding critical values) of the proposed test statistic are very close for mechanism (i), thus demonstrating the efficacy of the proposed test statistic in such settings.
\item The gap between the oracle and plug-in critical values increases, as the estimate of the pre-change precision matrix becomes less accurate, which is an inevitable consequence of dealing with a more challenging estimation problem.
\end{itemize}

Next, we assess the impact of the burn-in period ($N$) on the gap between the oracle and plug-in critical values. For this task, we use mechanism (i) for generating the precision matrix of size $p=80$ and set $w=40$. The oracle critical value $\zeta_{\pi_0, p, w}$ is based on $10^5$ independent replicates. For obtaining the distribution of the plug-in test statistic $\LpNorm{\hat{\boldsymbol{E}}_{t, w}}{\infty}$, we estimate the pre-change precision matrix based on $N=200, 300, \cdots, 700$ samples. Note that if both the plug-in and oracle critical values are the same, then
\begin{equation*}
p_N\coloneqq \bb{P}\paren{\LpNorm{\hat{\boldsymbol{E}}_{t, w}}{\infty}\leq \zeta_{\pi_0, p, w}} = 1-\pi_0 = 0.95.
\end{equation*}

We numerically calculate $p_N$ based on $10^5$ independent replicates, as well. Table \ref{Tab:Table1} shows $p_N$ and $\hat{e}_N$ for different values of $N$. A careful look at the results in Table \ref{Tab:Table1} indicates that a larger $N$ leads to a smaller estimation error $\hat{e}_N$ and a value for $p_N$ closer to $0.95$. Hence, this numerical experiment shows that the gap between the oracle and the plug-in test statistic becomes smaller, when the number of samples during the burn-in period becomes larger.
\vspace{4mm}
\begin{center} \label{Tab:Table1}
\begin{adjustbox}{width=0.9\columnwidth}
\begin{tabular}{c|c|c|c|c|c|c|c|}
\cline{2-8} 
& $N = 200$ & $N=300$ & $N=400$ & $N=500$ & $N=600$ & $N=700$ & Oracle\\ 
\cline{1-8}
\multicolumn{1}{ |c| }{$\hat{e}_N$} & $45.07\%$ & $32.41\%$ & $32.01\%$ & $27.42\%$ & $25.57\%$ & $21.48\%$ & $\textcolor{red}{\textendash}$\\
\cline{1-8}
\multicolumn{1}{ |c| }{$p_N$} & $77.79\%$ & $90.39\%$ & $92.81\%$ & $95.09\%$ & $95.03\%$ & $94.77\%$ & $\textcolor{red}{95\%}$\\
\cline{1-8}
\end{tabular}
\end{adjustbox}
\captionof{table}{$\hat{e}_N$ and $p_N$ of the plug-in test in the no-change regime for different values of $N$}
\label{Table1}
\end{center}

In the remainder of the section, we focus on gaining insights on the performance of the proposed detection test for locating a change point; i.e. under $\bb{H}_{1,t}$ in Eq. \eqref{HypothesisTestingFormulation}. We generate a zero-mean time-varying GGM $\set{\boldsymbol{X}_i}^{T+N}_{i=1}$ with $p$ nodes as follows.
\begin{equation}\label{SimulSet1}
\cov\paren{\boldsymbol{X}_i} = \boldsymbol{\Omega}_{pre} \bbM{1}_{\set{i\leq N+t_0}} + \boldsymbol{\Omega}_{post} \bbM{1}_{\set{i\geq N+t_0}},\quad i\in\set{1,\ldots, T+N}.
\end{equation}
$N$ samples from the pre-change precision matrix correspond to the burn-in period. We also generate $T$ samples for the detection procedure. We assume that a sudden change occurs in the precision matrix at time $t_0$ after the burn-in period. Both pre- and post- change point precision matrices, which are denoted by $\boldsymbol{\Omega}_{pre}$ and $\boldsymbol{\Omega}_{post}$ are generated according to mechanism (i) with $0.04 p$ non-zero entries per row (excluding the diagonal one). Similar to the first simulation study, we add $0.1$ to the diagonal entries for controlling the condition number and we standardize the diagonal entries of the covariance matrix (unit variance). The remaining model parameters are selected as follows:
\begin{enumerate}
\item $p=100$, resulting in $\boldsymbol{\Omega}_{pre}$ having $302$ non-zero entries on average. There are $N=300$ samples for estimating $\boldsymbol{\Omega}_{pre}$ during the burn-in period. The normalized estimation error, defined in Eq. \eqref{NEE}, is equal to $0.29$. We also set $t_0 = 100$, $w=75$ and $T=t_0+w$.
\item $p=150$, resulting in $\boldsymbol{\Omega}_{pre}$ having  $598$ unknown non-zero entries on average, and set $w=100$ and $\pi_0 = 0.05$. We choose $N=600$ samples for estimating $\boldsymbol{\Omega}_{pre}$ in the burn-in period. The normalized estimation error, defined in Eq. \eqref{NEE}, is equal to $0.28$. We also set $t_0 = 150$, $w=100$ and $T=t_0+w$.
\end{enumerate}

\begin{center}
\begin{figure}
\includegraphics[scale=0.45]{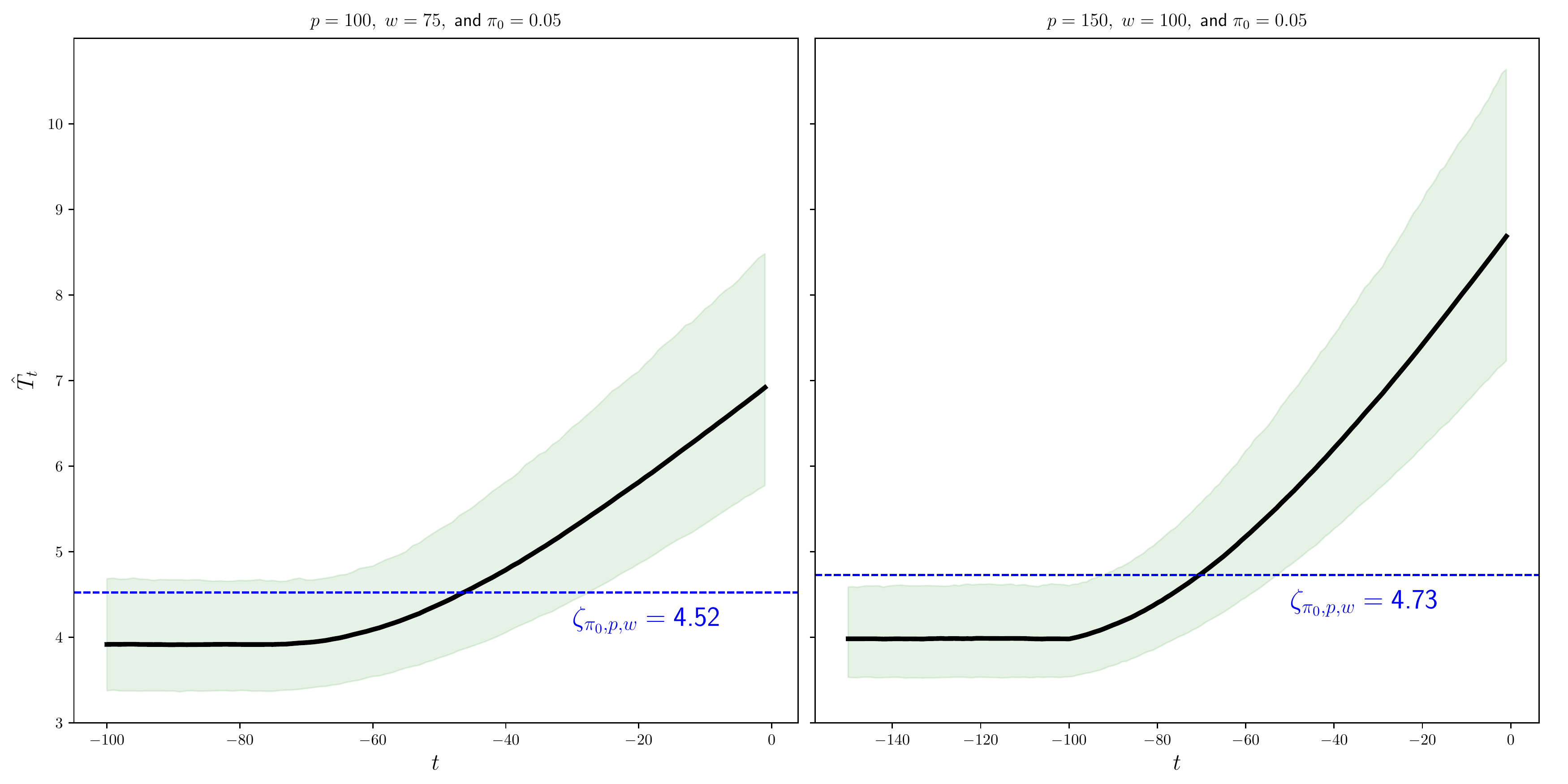}
\caption{In each panel, the solid black curve represents the average value of  $\LpNorm{\boldsymbol{E}_{t, w}}{\infty}$ over $10^4$ independent experiments and the blue dashed line shows $\zeta_{\pi_0, p, w}$.}
\label{fig:imagec}
\end{figure}
\end{center}

Since the focus is on detection, the estimated pre-change precision matrix is not updated for avoiding unnecessary complexity. For each of the two aforementioned scenarios, we independently repeat the process of generating $T$ samples $10^4$ times. For simplicity, consider the time interval $t\in\set{-t_0,1-t_0,\ldots, 0}$. The two panels in Figure \ref{fig:imagec} present the mean plug-in statistic time series $\LpNorm{\hat{\boldsymbol{E}}_{t, w}}{\infty},\;-t_0\leq t\leq 0$, as well as the confidence interval around it, as a function of $t$. Notice that $\hat{\boldsymbol{E}}_{t, w}$ is determined by the estimated pre-change precision matrix and generated samples $\boldsymbol{X}_i,\;i=t+1,\ldots, t+w$. Therefore, the detection statistic does not utilize any samples from the post-change regime, as long as $t < -w$. In contrast, it only utilizes samples after the change point, whenever $t\geq 0$. This fact is clearly shown in Figure \ref{fig:imagec}. Our proposed test statistic starts below the critical value $\zeta_{\pi_0, p, w}$ for $t=-t_0$ and gradually increases as $t$ grows. It is also apparent that the detection delay is indeed less than $w$, as the average time series crosses $\zeta_{\pi_0, p, w}$ before $t=0$. In particular, the proposed algorithm detects the change point only after observing $29$ and $27$ samples (on average) after the change point in the left and right panels, respectively.

The asymptotic results in Section \ref{Section3.2} manifest the advantages of the proposed procedure for identifying sudden changes that affect only a small number of edges in large GGMs. The next simulation study aims to corroborate our previous asymptotic understanding. We again consider a time-varying sparse graphical model comprising of $p=100$ nodes, where its dependence structure goes through a sudden change according to the model in Eq. \eqref{SimulSet1}. The pre-change GGM is generated according to mechanism (i) with $5\%$ non-zero entries per row, and is initially estimated by $N=300$ independent samples during the burn-in period. The detection delay is set to $w=150$. For brevity, we use $\boldsymbol{\Delta}$ for referring to 
$ \boldsymbol{\Omega}_{post} - \boldsymbol{\Omega}_{pre}$. Choose $s$ from $\set{1,2,3}$ and  construct $\boldsymbol{\Delta}$ as follows:
\begin{equation}\label{SimulSet2}
\boldsymbol{\Delta}_{ij} = \frac{\beta}{s} \bbM{1}_{\set{1\leq i, j \leq s}},
\end{equation}
where $\beta$ is a positive number. Obviously, $\supp\paren{\boldsymbol{\Delta}} = \set{1,\ldots,s}\times \set{1,\ldots, s}$ and $\LpNorm{\boldsymbol{\Delta}}{2} = \beta$, which is independent of $s$. Hence, the change point only affects $s$ nodes in the network. The parameter $\beta$ in Eq. \eqref{SimulSet2} is needed for controlling the intensity of the signal that induces the change point. Further, for a fixed $\beta$, increasing $s$ leads to a more distributed change point, as it affects more nodes without increasing the signal-to-noise-ratio (SNR). For each fixed set of parameters $s$ and $\beta$, $10^4$ independent replicates are used to approximate the mis-detection rate $\pi_1$ of the plug-in test statistic. Figure \ref{fig:imaged} depicts $\pi_1$ as a function of $\beta$ for different values of $s$. The summary results in Figure \ref{fig:imaged} illustrate two facts. First, the detection power increases (lower $\pi_1$) for larger SNR (larger $\beta$), that is aligned with the insights from our asymptotic analysis. Moreover, our method performs better for detecting change points that are confined to a small sub-graph of the precision matrix (smaller $s$).  
\begin{center}
\begin{figure}
\includegraphics[scale=0.45]{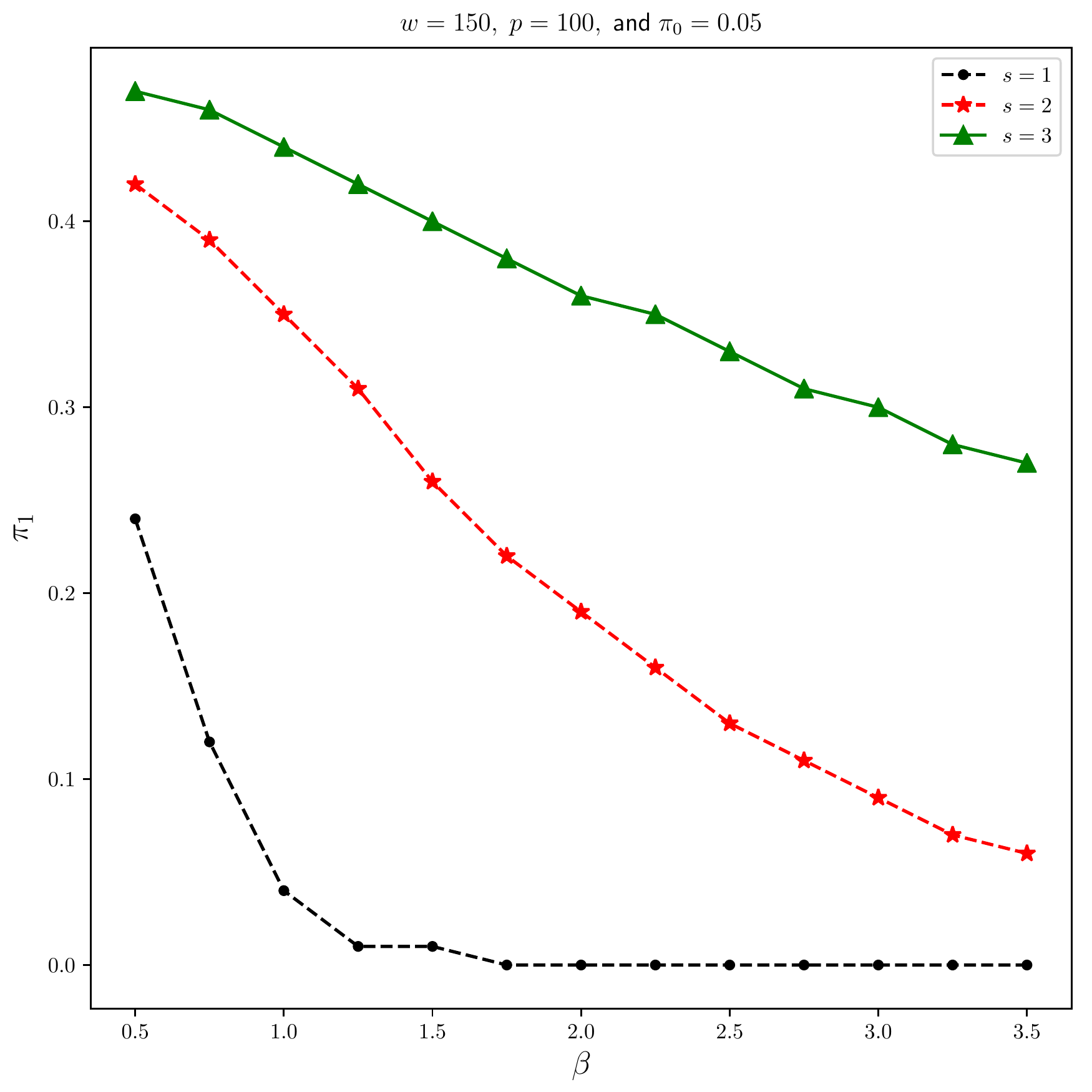}
\caption{$\pi_1$ versus $\beta$ in the change point model \eqref{SimulSet2}. The black, red, and green lines refer to $s=1, 2,$ and $3$, respectively. For any pair $\paren{\beta, s}$, $\pi_1$ is approximated by $10^4$ independent replicates.}
\label{fig:imaged}
\end{figure}
\end{center}

Next, we study the role of $w$ on the mis-detection of the proposed plug-in test. For doing so, we fix $\beta = 3$ and choose $s\in\set{1,2,3}$ in Eq. \eqref{SimulSet2}. We also increase $w$ from $60$ to $300$. Again, $10^4$ independent replicates are used to approximate $\pi_1$. Figure \ref{fig:imagef} exhibits $\pi_1$ versus $w$ for different values of $s$. It is apparent from Figure \ref{fig:imagef} that increasing delay $w$ reduces $\pi_1$, which confirms our asymptotic understanding.

\begin{center}
\begin{figure}
\includegraphics[scale=0.45]{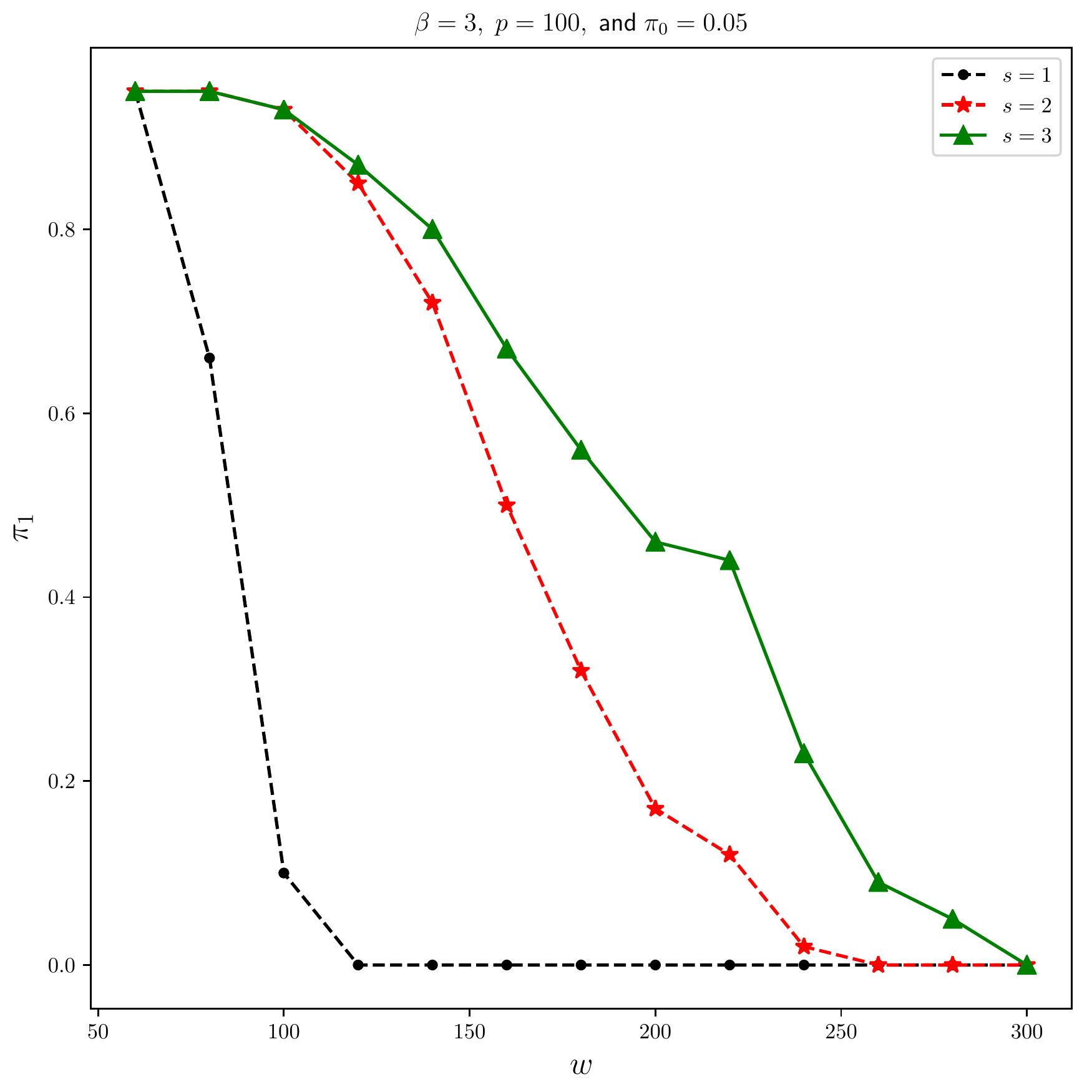}
\caption{$\pi_1$ versus $w$ in the change point model \eqref{SimulSet2}. The black, red, and green lines refer to $s=1, 2,$ and $3$, respectively. For any pair $\paren{s, w}$, $\pi_1$ is approximated by $10^4$ independent replicates.}
\label{fig:imagef}
\end{figure}
\end{center}

We conclude this section by comparing the performance of our proposed test, which we call it \emph{Local Change Point Detector (LCPD)}, to the procedure in \cite{keshavarz2018sequential}, designed for detecting changes affecting many nodes in the network. The test in \cite{keshavarz2018sequential} aggregates the signal across all nodes in the network, so it is referred to as \emph{Aggregated Change Point Detector (ACPD)}. Note that the comparison is based upon oracle settings, in order to avoid any distortions due to estimation errors for the pre-change precision matrix. A precision matrix comprising of $p=100$ nodes is generated according to mechanism (i) resulting in $236$ edges, for a total of $336$ distinct non-zero entries in the precision matrix, including the diagonal ones. The identity matrix is added to the generated precision matrix to avoid a small condition number. We also set $w=100$ and approximate $\pi_1$ based on $10^4$ replicates. Let $\boldsymbol{\Omega}_{pre}$ denote the pre-change precision matrix, and define $\boldsymbol{\Delta}$ as in the previous numerical study in this section. We assume that $\boldsymbol{\Delta}$ is given by
\begin{equation*}
\boldsymbol{\Delta} = \beta\begin{bmatrix}
\zero_{s\times \paren{p-s}} & & \boldsymbol{I}_s \\
& \zero_{\paren{p-2s}\times p}  &\\
\boldsymbol{I}_s & & \zero_{s\times \paren{p-s}}
\end{bmatrix}.
\end{equation*}
In words, $s$ new edges are added to the precision matrix after the change point. Similar to the previous numerical study, the size of the smallest sub-graph encompassing the nodes affected by the abrupt change is quantified by $s$. In this study, we assume that $s\in\set{1,5,20}$. Figure \ref{fig:imagee} presents $\pi_1$ as a function of $\beta$ for both ACPD and LCPD, with increasing $s$ from left to right. Figure \ref{fig:imagee} suggests that $\pi_1$ converges faster to zero for the LCPD and the gap between the two tests decreases as $s$ increases to $20$. This observation confirms the claim that aggregation over all nodes can be detrimental for detecting sudden changes affecting a small sub-set of nodes (and thus edges) in the precision matrix of a GGM.

\begin{center}
\begin{figure}
\includegraphics[scale=0.4]{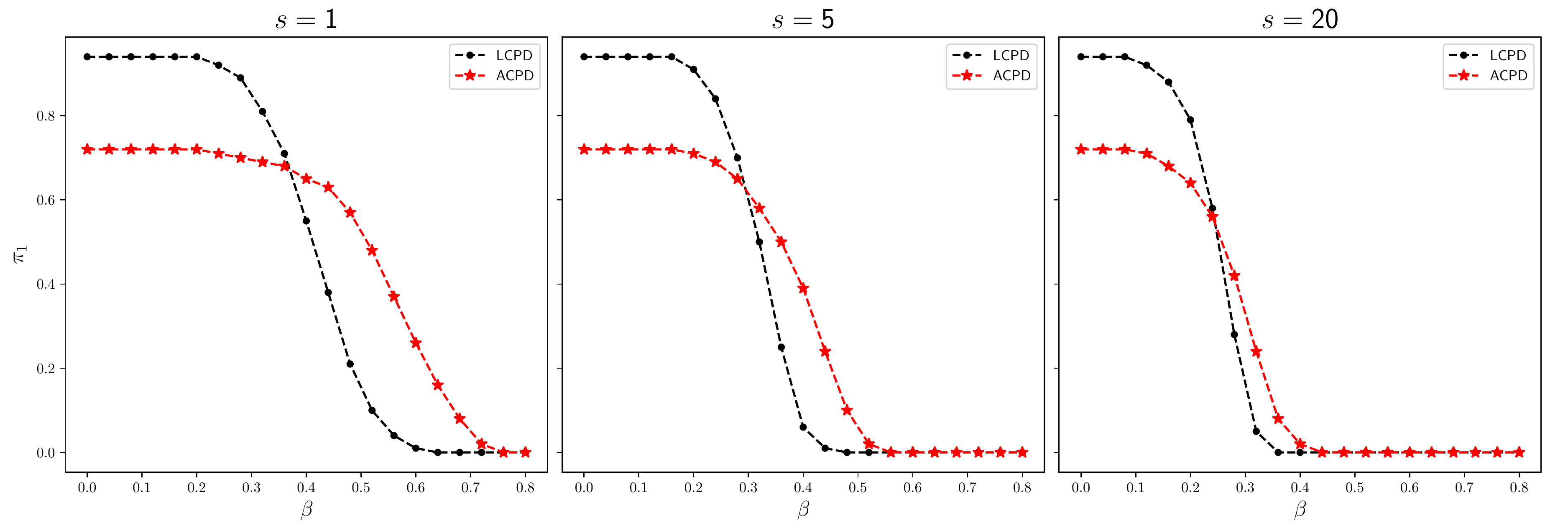}
\caption{Comparing the detection power of the proposed test (LCPD) with the algorithm in \cite{keshavarz2018sequential} (ACPD) for three different scenarios. In each panel, black and red solid lines refer to the LCPD and ACPD, respectively. For any pair $\paren{\beta, s}$, $\pi_1$ is approximated from $10^4$ simulations.}
\label{fig:imagee}
\end{figure}
\end{center}

\section{Concluding Remarks}\label{Discussion}

The paper studies the problem of sequential detection of abrupt changes in the precision matrix of sparse high-dimensional GGMs, whenever such changes impact few edges only. 
The analysis of the distribution of the test statistic $T_t$ under the null and the alternative hypotheses relies on extreme value theory for dependent random variables. An approach 
based on technical tools already used in the literature for two sample tests for covariance matrices leads to exceedingly stringent conditions. Instead, we develop novel techniques
leveraging Galambos' technique that provides the distribution of the maximum of graph-dependent random variables, that require mild regularity conditions, and renders the detection
procedure widely applicable. Note that these novel techniques are of independent interest and potentially applicable in other problems involving network data.
The numerical experiments provide strong evidence in support of the theoretical developments and confirm the good performance of the proposed change point detection procedure.

\section{Proofs}\label{Proofs}

This section contains the proofs of our main results. Recall from Eq. \eqref{Sbartw} that 
\begin{equation*}
\boldsymbol{E}_{t, w} = \sum_{r=1}^{w}\frac{ \paren{\boldsymbol{Y}_{t+r}\boldsymbol{Y}_{t+r}^\top-\TimeDepMatrix{\Omega}{t}}}{\sqrt{w}} \circ \brac{ \paren{\TimeDepMatrix{\Omega}{t}_{uu}\TimeDepMatrix{\Omega}{t}_{vv} + \paren{\TimeDepMatrix{\Omega}{t}_{uv}}^2}^{-1/2} }^p_{u,v=1},
\end{equation*} 
where $\set{\boldsymbol{Y}_{t+r}:\;r=1,\ldots,w}$ are $\cc{N}\paren{\zero_p,\TimeDepMatrix{\Omega}{t}}$ distributed, in the absence of a break between $t+1$ and $t+w$. Throughout this section, $Z$ represents a $\cc{N}\paren{0,1}$ random variable. Without loss of generality, we assume that diagonal entries of $\TimeDepMatrix{\Omega}{t}$ are equal to $1$. We interchangeably use $\boldsymbol{\Omega}$, $\boldsymbol{E}$ and $\boldsymbol{Y}_r$ instead of $\TimeDepMatrix{\Omega}{t}$, $\boldsymbol{E}_{t, w}$ and $\boldsymbol{Y}_{t+r}$ below. Finally, define the set $\cc{K}_{p}$ by 
\begin{equation*}
\cc{K}_p = \set{\paren{r,s}:\;1\leq r\leq s\leq p}.
\end{equation*}

\begin{proof}[Proof of Theorem \ref{thm0}]
The objective is to obtain $\zeta_{\pi_0, p, w}$ such that
\begin{eqnarray*}
\bb{P}_{\FA}\paren{T_t} &=& \bb{P}\paren{\max_{\paren{r,s}\in\cc{K}_p} \abs{\boldsymbol{E}_{rs}}\geq \zeta_{\pi_0, p, w}} = \bb{P}\brac{\paren{\max_{\paren{r,s}\in\cc{K}_p} \boldsymbol{E}_{rs}\geq \zeta_{\pi_0, p, w}} \bigcup \paren{\max_{\paren{r,s}\in\cc{K}_p} -\boldsymbol{E}_{rs}\geq \zeta_{\pi_0, p, w}}}\\
&\leq& \pi_0\brac{1+o\paren{1}}.
\end{eqnarray*}
Due to the union bound, we only need to show that $\bb{P}\paren{\max_{\paren{r,s}\in\cc{K}_p} \boldsymbol{E}_{rs}\geq \zeta_{\pi_0, p, w}} \leq \frac{\pi_0}{2}\brac{1+o\paren{1}}$. Let $\boldsymbol{G}$ be a symmetric centered Gaussian random matrix with the same correlation structure as $\boldsymbol{E}$, i.e. $\boldsymbol{G}\in S^{p\times p}$ with
\begin{equation*}
\cov\paren{\boldsymbol{G}_{rs}, \boldsymbol{G}_{r's'}} = \cov\paren{\boldsymbol{E}_{rs}, \boldsymbol{E}_{r's'}},\quad \paren{r,s},\paren{r',s'}\in\cc{K}_p.
\end{equation*}
It is sufficient to establish the following claims.
\begin{clawithinpf}\label{Claim1Thm0}
$\bb{P}\paren{\max_{\paren{r,s}\in\cc{K}_p} \boldsymbol{G}_{rs}\geq \zeta_{\pi_0, p, w}} \rightarrow \frac{\pi_0}{2}$.
\end{clawithinpf}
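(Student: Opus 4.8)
The plan is to recognize that $\boldsymbol{G}$ is a centered Gaussian random field indexed by the finite (but growing) index set $\cc{K}_p$, with the \emph{same} covariance structure as $\boldsymbol{E}$. Hence I first need to describe that covariance structure and argue it is ``mostly uncorrelated''. Using Isserlis' theorem together with the closed-form variance computation already recorded in the excerpt, one sees that each $\boldsymbol{G}_{rs}$ has unit variance, and that $\cov(\boldsymbol{G}_{rs},\boldsymbol{G}_{r's'})$ is a bounded function of the entries $\boldsymbol{\Omega}_{rr'},\boldsymbol{\Omega}_{rs'},\boldsymbol{\Omega}_{sr'},\boldsymbol{\Omega}_{ss'}$ (and of the corresponding pairs appearing in the normalization). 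Under Assumption \ref{AssuCLT} ($\boldsymbol{\Omega}$ has at most $d_{\max}$ nonzeros per row, and all off-diagonal correlations are bounded by $r_{\max}<1$), the pair $(r,s)$ is correlated with $(r',s')$ only when the index sets $\{r,s\}$ and $\{r',s'\}$ are linked through a nonzero entry of $\boldsymbol{\Omega}$; the number of such ``neighbors'' of a given pair is $\cc{O}(p\,d_{\max})$, which is $o(p^2)=o(|\cc{K}_p|)$, and all such correlations are bounded away from $1$ in absolute value by a constant depending only on $r_{\max}$ and $d_{\max}$.

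Next, I would invoke the extreme-value result for a (sparsely correlated) sequence of standard Gaussian variables — precisely Lemma $6$ of Cai et al.\ \cite{cai2014two}, which is stated for unstructured collections of Gaussians whose covariance matrix is sparse with uniformly sub-unit off-diagonal entries. Its conclusion is that the maximum behaves asymptotically like the maximum of $N:=|\cc{K}_p|={p+1\choose 2}$ i.i.d.\ standard normals, i.e.\ for any fixed $z\in\bb{R}$,
\begin{equation*}
\bb{P}\paren{\max_{\paren{r,s}\in\cc{K}_p}\boldsymbol{G}_{rs}\leq \sqrt{2\log N}-\frac{\log\log N + \log(4\pi) - 2z}{2\sqrt{2\log N}}}\;\longrightarrow\; \exp\paren{-\frac{e^{-z}}{\sqrt{\pi}}}.
\end{equation*}
It remains to check that the prescribed threshold $\zeta_{\pi_0,p,w}$, defined by $\zeta^2_{\pi_0,p,w}=2\log N-\log\log N-2\log\brac{2\sqrt{\pi}\log(1/(1-\pi_0/2))}$, matches this Gumbel normalization with the choice $e^{-z}/\sqrt{\pi}=-\log(1-\pi_0/2)$, i.e.\ $e^{-z}=-\sqrt{\pi}\log(1-\pi_0/2)$. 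A short asymptotic expansion of $\zeta^2_{\pi_0,p,w}$ shows $\zeta_{\pi_0,p,w}=\sqrt{2\log N}-\tfrac{\log\log N+\log(4\pi)-2z}{2\sqrt{2\log N}}+o(1/\sqrt{\log N})$ with exactly this $z$, so
\begin{equation*}
\bb{P}\paren{\max_{\paren{r,s}\in\cc{K}_p}\boldsymbol{G}_{rs}\geq\zeta_{\pi_0,p,w}}\;\longrightarrow\;1-\exp\paren{\log(1-\pi_0/2)}=\frac{\pi_0}{2},
\end{equation*}
which is the claim. (The $o(1)$ correction in $\zeta_{\pi_0,p,w}$ is absorbed since the limiting distribution function is continuous.)

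The main obstacle I anticipate is the bookkeeping needed to verify that $\boldsymbol{G}$'s covariance structure genuinely satisfies the hypotheses of Lemma $6$ of \cite{cai2014two}: one must translate Assumption \ref{AssuCLT} into (i) a uniform bound strictly below $1$ on all pairwise correlations $|\cov(\boldsymbol{G}_{rs},\boldsymbol{G}_{r's'})|$ for $(r,s)\ne(r',s')$, and (ii) a bound of order $p\,d_{\max}=o(p^2)$ on the number of pairs that are correlated with any fixed pair, i.e.\ the required sparsity of the covariance matrix of the $N$-vector $(\boldsymbol{G}_{rs})_{(r,s)\in\cc{K}_p}$. Both follow from the row-sparsity and the correlation bound $r_{\max}$ in Assumption \ref{AssuCLT}, but making the constants explicit (especially (i), where one must rule out near-perfect correlation between two \emph{distinct} but overlapping pairs such as $(r,s)$ and $(r,s')$ with $\boldsymbol{\Omega}_{ss'}$ close to $r_{\max}$) requires care; the Cauchy--Schwarz-type estimate $|\cov(\boldsymbol{G}_{rs},\boldsymbol{G}_{rs'})|\le$ some constant times $|r_{\max}|<1$ is exactly what does the job, and this is the step where I would be most careful.
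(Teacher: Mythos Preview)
Your proposal is correct and follows essentially the same route as the paper: compute $\cov(\boldsymbol{G}_{rs},\boldsymbol{G}_{r's'})$ via Isserlis' theorem, use Assumption~\ref{AssuCLT} to verify the sparsity hypotheses needed to invoke Lemma~6 of Cai et al., and then match the Gumbel normalization to the prescribed $\zeta_{\pi_0,p,w}$. The only minor difference is that the paper verifies sparsity by the clean bound $\OpNorm{\boldsymbol{Q}}{1}{1}\le 2\OpNorm{\boldsymbol{\Omega}}{1}{1}^2\le 2d_{\max}^2$ on the full covariance matrix $\boldsymbol{Q}$ of $(\boldsymbol{G}_{rs})_{(r,s)\in\cc{K}_p}$ rather than by your counting argument, and does not separately address your point~(i) about correlations being bounded away from~$1$.
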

	
\begin{clawithinpf}\label{Claim2Thm0}
$\max_{\paren{r,s}\in\cc{K}_p} \boldsymbol{E}_{rs}-\max_{\paren{r,s}\in\cc{K}_p} \boldsymbol{G}_{rs} \cp{\bb{P}} 0$.
\end{clawithinpf}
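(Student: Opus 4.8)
The plan is to realize Claim \ref{Claim2Thm0} as an application of the Gaussian approximation/coupling machinery of Chernozhukov, Chetverikov and Kato, namely Theorem $4.1$ of \cite{chernozhukov2014gaussian}. Write $\boldsymbol{E} = w^{-1/2}\sum_{i=1}^{w}\boldsymbol{\xi}_i$, where $\boldsymbol{\xi}_1,\ldots,\boldsymbol{\xi}_w$ are i.i.d.\ random vectors indexed by $\cc{K}_p$ with
\[
\paren{\boldsymbol{\xi}_i}_{rs} = \frac{Y_{i,r}Y_{i,s}-\Omega_{rs}}{\sqrt{\Omega_{rr}\Omega_{ss}+\Omega_{rs}^2}},\qquad \paren{r,s}\in\cc{K}_p,
\]
and $\boldsymbol{Y}_i = \boldsymbol{\Omega}\boldsymbol{X}_{t+i}\sim\cc{N}\paren{\zero_p,\boldsymbol{\Omega}}$ under the null. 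Applying that theorem to the maximum of these coordinates over the finite index set $\cc{K}_p$ (of cardinality $\binom{p+1}{2}\asymp p^2$) produces, on a suitably enriched probability space, a centered Gaussian vector with the same covariance as $\boldsymbol{E}$ — which we take to be $\boldsymbol{G}$, since only its law matters for Claim \ref{Claim1Thm0} — together with a coupling bound for $\abs{\max_{\cc{K}_p}\boldsymbol{E}_{rs}-\max_{\cc{K}_p}\boldsymbol{G}_{rs}}$.

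Next I would verify the hypotheses of that theorem, and this is where Assumption \ref{AssuCLT} enters. Each $\boldsymbol{\xi}_i$ has centered, unit-variance coordinates — by Isserlis' theorem and the normalization in \eqref{Sbartw} — and, crucially, each coordinate $\paren{\boldsymbol{\xi}_i}_{rs}$ is sub-exponential with Orlicz $\psi_1$-norm bounded by a constant depending only on $\paren{r_{\max},\alpha_{\min},d_{\max}}$: after the diagonal normalization, $\abs{\Omega_{rs}}\le r_{\max}<1$ keeps the denominators $\sqrt{1+\Omega_{rs}^2}$ bounded away from $0$, while $Y_{i,r}Y_{i,s}$ is a product of jointly Gaussian variables with correlation at most $r_{\max}$, hence uniformly sub-exponential in $\paren{r,s}$. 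Thus the envelope constant $B_w$ appearing in the moment condition of \cite{chernozhukov2014gaussian} can be taken bounded, and $\log\paren{\abs{\cc{K}_p}\,w}\asymp\log p$ under Condition (b).

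Feeding these inputs into the theorem yields, for every $\xi\in\brapar{0,8}$,
\[
\bb{P}\paren{ \abs{\max_{\paren{r,s}\in\cc{K}_p}\boldsymbol{E}_{rs}-\max_{\paren{r,s}\in\cc{K}_p}\boldsymbol{G}_{rs}} > C_\xi\paren{\frac{\log^{8+\xi}p}{w}}^{1/8} } = \cc{O}\paren{\paren{\frac{\log^{8-\xi}p}{w}}^{1/4}},
\]
and for $\xi=0$ both sides vanish as $p,w\to\infty$ precisely because $w^{-1}\log^{8}p\to0$; this establishes $\max_{\cc{K}_p}\boldsymbol{E}_{rs}-\max_{\cc{K}_p}\boldsymbol{G}_{rs}\cp{\bb{P}}0$. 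It is exactly this eighth power of $\log p$ that forces the stringent Condition (b) of Theorem \ref{thm0}, and motivates the alternative route of Theorem \ref{Thm1}.

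I expect the main obstacle to be the heavy tails of the summands. Since $\boldsymbol{\xi}_i$ is built from quadratic forms in Gaussians it is only sub-exponential, not sub-Gaussian or bounded, so the most transparent version of the Gaussian approximation does not apply directly. One must truncate each $\boldsymbol{\xi}_i$ at a level of order $B_w\log\paren{pw}$, control the discarded parts with a Bernstein-type maximal inequality, and check that the covariance of the truncated process stays within an $\cc{O}_{\bb{P}}$-negligible distance of that of $\boldsymbol{E}$ so that the comparison with the Gaussian $\boldsymbol{G}$ remains valid; the additional logarithmic factors coming from the truncation level are what degrade the usual $\log^{7}$ rate to $\log^{8}$. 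Assembling these estimates — the bookkeeping that constitutes the formal proof — completes the argument.
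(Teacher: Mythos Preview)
Your approach is essentially the paper's: invoke Theorem~4.1 of \cite{chernozhukov2014gaussian} for the i.i.d.\ summands indexed by $\cc{K}_p$, and show the coupling error vanishes under $w^{-1}\log^8 p\to 0$. The rate you quote is exactly the one the paper records.

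Two small mismatches in execution. First, the paper does not work with a single ``envelope constant $B_w$'' but with the two quantities $B_1=\bb{E}\max_j\sum_i\abs{\boldsymbol{U}_{ij}}^3$ and $B_2=\bb{E}\max_{j,k}\abs{\sum_i\boldsymbol{U}_{ij}\boldsymbol{U}_{ik}-w\boldsymbol{\Lambda}_{jk}}$ that appear in the stated version of the Chernozhukov--Chetverikov--Kato bound; these are controlled directly by moment bounds on $\LpNorm{\boldsymbol{Y}_1}{\infty}$ (Lemma~\ref{UppBndmomentsSubGauss}) and by Lemma~1 of \cite{chernozhukov2015comparison}, yielding $B_1\lesssim w^{-1/2}\log^3 p$ and $B_2\lesssim\sqrt{w^{-1}\log p}\vee w^{-1}\log^3 p$. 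Second, the truncation step you anticipate is not carried out: no explicit truncation of the sub-exponential summands is needed, because the version of the theorem used only requires finite third moments together with control of $B_1$ and $B_2$, and those are obtained straight from the moment lemmas above. So the ``bookkeeping'' you flag as the main obstacle is lighter than you expect, and your last paragraph can be dropped.
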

We proceed by proving Claim \ref{Claim1Thm0}. Set $q=\abs{\cc{K}_p} = p\paren{p+1}/2$. Let $\boldsymbol{Q}\in\bb{R}^{q\times q}$ denote the covariance matrix of $\brac{\boldsymbol{G}_{rs}:\;\paren{r,s}\in\cc{K}_p}^\top$. We obtain a closed formulation for the entries of $\boldsymbol{Q}$ as follows. Choose two arbitrary pairs $\paren{r,s}, \paren{r',s'}\in\cc{K}_p$. Since entries of $\boldsymbol{E}$ comprise of a summation of $w$ i.i.d. random variables, an application Isserlis' Theorem yields
\begin{eqnarray*}
\cov\paren{\boldsymbol{G}_{rs}, \boldsymbol{G}_{r's'}} &=& \cov\brac{\frac{\sum_{l=1}^{w}\paren{\boldsymbol{Y}_l}_r \paren{\boldsymbol{Y}_l}_s}{\sqrt{w\paren{1+\boldsymbol{\Omega}^2_{rs}}}}, \frac{\sum_{l=1}^{w} \paren{\boldsymbol{Y}_l}_{r'} \paren{\boldsymbol{Y}_l}_{s'}}{\sqrt{w\paren{1+\boldsymbol{\Omega}^2_{r's'}}}}} \\
&=& \sum_{l=1}^{w} \cov\brac{\frac{\paren{\boldsymbol{Y}_l}_r \paren{\boldsymbol{Y}_l}_s}{\sqrt{w\paren{1+\boldsymbol{\Omega}^2_{rs}}}}, \frac{ \paren{\boldsymbol{Y}_l}_{r'} \paren{\boldsymbol{Y}_l}_{s'}}{\sqrt{w\paren{1+\boldsymbol{\Omega}^2_{r's'}}}}} = \frac{\boldsymbol{\Omega}_{rr'}\boldsymbol{\Omega}_{ss'}+\boldsymbol{\Omega}_{rs'}\boldsymbol{\Omega}_{r's}}{\sqrt{\paren{1+\boldsymbol{\Omega}^2_{rs}}\paren{1+\boldsymbol{\Omega}^2_{r's'}}}}.
\end{eqnarray*}
Based on Assumption \ref{AssuCLT}, $\OpNorm{\boldsymbol{\Omega}}{1}{1}$ remains below a fixed scalar $d_{\max}$, as $p\rightarrow\infty$. We aim to prove a similar property for $\boldsymbol{Q}$. Particularly, we show that $\OpNorm{\boldsymbol{Q}}{1}{1} \leq 2\OpNorm{\boldsymbol{\Omega}}{1}{1}^2$. Opt any $\paren{r,s}\in\cc{K}_p$. Then,
\begin{eqnarray}\label{Eq1Thm0}
\sum_{{\paren{r',s'}\in\cc{K}_p}} \abs{\cov\paren{\boldsymbol{G}_{rs}, \boldsymbol{G}_{r's'}}} &=& \sum_{{\paren{r',s'}\in\cc{K}_p}} \frac{\abs{\boldsymbol{\Omega}_{rr'}\boldsymbol{\Omega}_{ss'}+\boldsymbol{\Omega}_{rs'}\boldsymbol{\Omega}_{r's}}}{\sqrt{\paren{1+\boldsymbol{\Omega}^2_{rs}}\paren{1+\boldsymbol{\Omega}^2_{r's'}}}} \leq \sum_{{\paren{r',s'}\in\cc{K}_p}} \abs{\boldsymbol{\Omega}_{rr'}\boldsymbol{\Omega}_{ss'}+\boldsymbol{\Omega}_{rs'}\boldsymbol{\Omega}_{r's}}\nonumber\\
&\leq& \sum_{{\paren{r',s'}\in\cc{K}_p}} \abs{\boldsymbol{\Omega}_{rr'}\boldsymbol{\Omega}_{ss'}}+\abs{\boldsymbol{\Omega}_{rs'}\boldsymbol{\Omega}_{r's}} \leq \sum_{r',s'=1}^{p} \abs{\boldsymbol{\Omega}_{rr'}\boldsymbol{\Omega}_{ss'}}+\abs{\boldsymbol{\Omega}_{rs'}\boldsymbol{\Omega}_{r's}}\nonumber\\
&=& 2\sum_{r'=1}^{p} \abs{\boldsymbol{\Omega}_{rr'}}\sum_{s'=1}^{p} \abs{\boldsymbol{\Omega}_{ss'}} = 2\LpNorm{\boldsymbol{\Omega}\boldsymbol{e}_{r}}{1}\LpNorm{\boldsymbol{\Omega}\boldsymbol{e}_{s}}{1}.
\end{eqnarray}
Eq. \eqref{Eq1Thm0} establishes the claim, since 
\begin{equation*}
\OpNorm{\boldsymbol{Q}}{1}{1} = \max_{\paren{r,s}\in\cc{K}_p} \brac{\sum_{{\paren{r',s'}\in\cc{K}_p}} \abs{\cov\paren{\boldsymbol{G}_{rs}, \boldsymbol{G}_{r's'}}}} \leq 2\max_{\paren{r,s}\in\cc{K}_p} \LpNorm{\boldsymbol{\Omega}\boldsymbol{e}_{r}}{1}\LpNorm{\boldsymbol{\Omega}\boldsymbol{e}_{s}}{1} \leq 2\OpNorm{\boldsymbol{\Omega}}{1}{1}^2\leq 2d^2_{\max}.
\end{equation*}
Given that $\OpNorm{\boldsymbol{Q}}{1}{1}$ remains bounded as $p$ grows (soft sparsity), known results on the extreme value of dependent Gaussian random variables (see Lemma $6$ of \cite{cai2013two}) implies that
\begin{equation}\label{Eq2Thm0}
\bb{P}\paren{\max_{\paren{r,s}\in\cc{K}_p} \boldsymbol{G}_{rs}\geq \sqrt{2\log q-\log\log q+x}} \rightarrow 1-\exp\brac{\frac{-e^{-x/2}}{2\sqrt{\pi}}}, \quad\forall\;x\in\bb{R}.
\end{equation}
We conclude the proof by choosing $x$ so that $1-\exp\brac{\frac{-e^{-x/2}}{2\sqrt{\pi}}} = \frac{\pi_0}{2}$. Then, we can rewrite Eq. \eqref{Eq2Thm0} as
\begin{equation*}
\bb{P}\paren{\max_{\paren{r,s}\in\cc{K}_p} \boldsymbol{G}_{rs}\geq \sqrt{2\log q-\log\log q-2\log\brac{2\sqrt{\pi}\log\paren{\frac{1}{1-\pi_0/2}}}}} \rightarrow \frac{\pi_0}{2}.
\end{equation*}
Lastly, we pick $\zeta_{\pi_0, p, w}$ by
\begin{equation*}
\zeta^2_{\pi_0, p, w} = 2\log q-\log\log q-2\log\brac{2\sqrt{\pi}\log\paren{\frac{1}{1-\pi_0/2}}}.
\end{equation*}
Next, we focus on the proof of Claim \ref{Claim2Thm0}. We utilize Theorem $4.1$ in Chernozhukov et al. \cite{chernozhukov2014gaussian}, which is about a Gaussian approximation of maxima of zero-mean empirical processes. For completeness, we state this result before proceeding further. 
\begin{thm}\label{ThmCher} (Theorem $4.1$ \cite{chernozhukov2014gaussian}) Let $\set{\boldsymbol{U}_i}^w_{i=1}$ be i.i.d. zero-mean random vectors in $\bb{R}^q$ with finite absolute third moments, i.e. $\max_{j=1,\ldots, q}\bb{E}\abs{\boldsymbol{U}_{1j}}^3 < \infty$. Let $\set{\boldsymbol{Z}_i}^w_{i=1}$ be a set of i.i.d. centered Gaussian random vectors in $\bb{R}^q$ with $\cov\boldsymbol{Z}_i = \cov\boldsymbol{U}_i = \boldsymbol{\Lambda},\;\forall\; i=1,\ldots,w$. Consider the following random objects
\begin{equation*}
\Xi_w = \max_{1\leq j\leq q} \sum_{i=1}^{w} \boldsymbol{U}_{ij},\quad\mbox{and}\quad \tilde{\Xi}_w = \max_{1\leq j\leq q} \sum_{i=1}^{w} \boldsymbol{Z}_{ij}.
\end{equation*}
Then, 
\begin{equation}\label{Thm41Cher}
\bb{P}\paren{\abs{\Xi_w-\tilde{\Xi}_w} > \delta} \lesssim \frac{\log w}{w} + \frac{B_1\log\paren{q+w} }{\delta^2} + \frac{B_2\log^2\paren{q+w}}{\delta},
\end{equation}
where $B_1$ and $B_2$ are given by
\begin{equation}\label{B1B2}
B_1\coloneqq \bb{E}\paren{\max_{1\leq j\leq q} \sum_{i=1}^{w} \abs{\boldsymbol{U}_{ij}}^3}, \quad \mbox{and}\quad B_2 = \bb{E}\paren{\max_{1\leq j, k\leq q} \abs{\sum_{i=1}^{w} \boldsymbol{U}_{ij}\boldsymbol{U}_{ik}-w\boldsymbol{\Lambda}_{jk}}}.
\end{equation}
\end{thm}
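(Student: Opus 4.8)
\emph{Proof strategy.} The statement is the Gaussian approximation result of Chernozhukov et al.\ \cite{chernozhukov2014gaussian}; the plan is to reproduce the Slepian--Stein (Lindeberg) smoothing argument that underlies it. Write $W=\sum_{i=1}^{w}\boldsymbol U_i$ and $\tilde W=\sum_{i=1}^{w}\boldsymbol Z_i$, so that $\Xi_w=\max_{j}W_j$ and $\tilde\Xi_w=\max_{j}\tilde W_j$. First I would replace the non-smooth coordinate maximum by the log-sum-exp surrogate $F_\beta(x)=\beta^{-1}\log\!\big(\sum_{j=1}^{q}e^{\beta x_j}\big)$, which obeys $0\le F_\beta(x)-\max_{j}x_j\le \beta^{-1}\log q$ for all $x\in\bb{R}^{q}$; taking $\beta\asymp \delta^{-1}\log q$ makes the smoothing error of order $\delta$. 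To connect an expectation bound for smooth test functions to a statement about distribution functions, I would compose $F_\beta$ with a thrice-differentiable function $g$ that approximates $\bbM{1}\{\,\cdot\,\le t\}$ with controlled first, second, and third derivatives, and then aim to bound $\sup_{t\in\bb{R}}\big|\bb{E}\,g(F_\beta(W))-\bb{E}\,g(F_\beta(\tilde W))\big|$.

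The core estimate comes from Lindeberg replacement. Introduce the hybrids $W^{(\ell)}=\sum_{i\le\ell}\boldsymbol Z_i+\sum_{i>\ell}\boldsymbol U_i$ and telescope $\bb{E}\,m(W)-\bb{E}\,m(\tilde W)=\sum_{\ell=1}^{w}\big(\bb{E}\,m(W^{(\ell-1)})-\bb{E}\,m(W^{(\ell)})\big)$ with $m=g\circ F_\beta$. For each $\ell$, a third-order Taylor expansion about the leave-one-out vector $W^{(\ell)}-\boldsymbol U_\ell$ (resp.\ $-\boldsymbol Z_\ell$) removes the $\ell$-th summand; by independence, zero means, and the matched covariances $\cov\boldsymbol U_i=\cov\boldsymbol Z_i=\boldsymbol\Lambda$, the zeroth, first, and second-order terms cancel exactly. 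What survives are (i) third-order remainders, controlled through $\|\partial^3 m\|$ together with the third absolute moments, which aggregate to the term $B_1\log(q+w)/\delta^2$ in \eqref{Thm41Cher} once one exploits the partition-of-unity structure of the derivatives of $F_\beta$ (the three derivatives contribute $\beta^2\asymp \log^2 q/\delta^2$, but the coordinatewise weights sum to one, which prevents an extra factor $q$), and (ii) a Hessian-type discrepancy of the form $\langle\partial^2 m,\boldsymbol U_\ell\boldsymbol U_\ell^{\top}\rangle-\langle\partial^2 m,\boldsymbol\Lambda\rangle$ which is not small pointwise; I would handle it by truncating on the event $\{\max_{j,k}|\sum_{i}\boldsymbol U_{ij}\boldsymbol U_{ik}-w\boldsymbol\Lambda_{jk}|\le M\}$ with $M$ of the order of the expectation $B_2$ appearing in \eqref{B1B2}, which produces the $B_2\log^2(q+w)/\delta$ term, while the complement of that event, bounded crudely by Markov's inequality (or by a rough law-of-large-numbers estimate for the empirical second moments), contributes the $\log w/w$ term.

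Finally I would convert the smooth-function bound into the asserted coupling bound. Anti-concentration of the Gaussian maximum $\tilde\Xi_w=\max_j\tilde W_j$ --- available because $\tilde W$ is a centered Gaussian vector with coordinate variances $w\boldsymbol\Lambda_{jj}$, so the density of $\tilde\Xi_w$ is of order $\sqrt{\log q}$ after normalization --- lets me upgrade $\sup_t\big|\bb{E}\,g(F_\beta(W))-\bb{E}\,g(F_\beta(\tilde W))\big|$ to a Kolmogorov-distance bound $\sup_t\big|\bb{P}(\Xi_w\le t)-\bb{P}(\tilde\Xi_w\le t)\big|\le \pi_w$, with $\pi_w$ equal to the right-hand side of \eqref{Thm41Cher} up to absolute constants. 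Strassen's theorem (equivalently, the quantile/Monge--Kantorovich coupling) then produces a coupling of $\Xi_w$ and $\tilde\Xi_w$ on a common probability space for which $\bb{P}\paren{|\Xi_w-\tilde\Xi_w|>\pi_w}\le \pi_w$, which is the claim after renaming $\delta$.

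The main obstacle is item (ii) above: the Hessian term in the Lindeberg step neither vanishes nor is uniformly small, so the entire difficulty is to isolate its "mean part'' (which cancels against $\langle\partial^2 m,\boldsymbol\Lambda\rangle$) from its fluctuation, and to do this while the third derivatives of the log-sum-exp already cost a factor $\log^2 q/\delta^2$. Obtaining the clean dependence on $B_1$ and $B_2$ --- rather than on crude quantities carrying spurious powers of $q$ --- requires careful bookkeeping of the derivative weights of $F_\beta$ together with an optimal choice of the truncation level $M$; this is where essentially all of the work resides.
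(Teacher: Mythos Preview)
The paper does not prove this statement at all: Theorem~\ref{ThmCher} is quoted verbatim from \cite{chernozhukov2014gaussian} inside the proof of Theorem~\ref{thm0} ``for completeness'' and is used as a black box, so there is no in-paper proof to compare against. Your sketch is essentially the Slepian--Stein smoothing/Lindeberg interpolation argument that \cite{chernozhukov2014gaussian} itself uses, so in that sense your approach matches the original source rather than offering an alternative. One caveat: your handling of item (ii) is slightly off in spirit --- in \cite{chernozhukov2014gaussian} the $B_2$ term does not arise from a truncation on a high-probability event whose complement yields the $\log w/w$ term; rather, the $\log w/w$ contribution comes from the Strassen/quantile coupling step (Lemma~4.1 there), while the $B_2$ term enters directly through an anti-concentration bound for the Gaussian maximum combined with an $\ell_\infty$ bound on the empirical covariance discrepancy. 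This does not invalidate your outline, but if you pursue it you will find the bookkeeping cleaner if you separate the Kolmogorov-distance bound (which involves $B_1$ and $B_2$) from the coupling construction (which contributes the $\log w/w$).
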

We proceed by defining $q=\abs{\cc{K}_p}$ dimensional i.i.d. random vectors $\boldsymbol{U}_i,\;j=1,\ldots, w$ as
\begin{equation}\label{UVec}
\boldsymbol{U}_i = \vect\brac{\frac{ \paren{\boldsymbol{Y}_{t+i}}_r\paren{\boldsymbol{Y}_{t+i}}_s-\boldsymbol{\Omega}_{rs}}{\sqrt{w\paren{\boldsymbol{\Omega}_{rr}\boldsymbol{\Omega}_{ss} + \boldsymbol{\Omega}^2_{rs}}}}:\;\paren{r,s}\in\cc{K}_p}.
\end{equation}
$\boldsymbol{U}_r$ is a vectorized version (concatenating the rows) of the upper triangle matrix constructed by indices in $\cc{K}_p$. Notice that the entries of $\set{\boldsymbol{U}_r}^w_{r=1}$ have zero-mean and unit-variance, and have finite third moment. Furthermore, let $\boldsymbol{Z}_i,\;j=1,\ldots, w$ be a set of i.i.d. centered Gaussian vectors with the same covariance matrix as $\boldsymbol{U}_1$. The formulation of $\boldsymbol{U}_i$ and $\boldsymbol{Z}_i$ vectors obviously implies that
\begin{equation*}
\max_{1\leq j\leq q} \sum_{i=1}^{w} \boldsymbol{U}_{ij} = \max_{\paren{r,s}\in\cc{K}_p} \boldsymbol{E}_{rs}\quad\mbox{and}\quad\max_{1\leq j\leq q} \sum_{i=1}^{w} \boldsymbol{Z}_{ij} = \max_{\paren{r,s}\in\cc{K}_p} \boldsymbol{G}_{rs},
\end{equation*}
which set the stage for proving Claim \ref{Claim2Thm0} using Theorem \ref{ThmCher}. Next, we obtain sharp upper bounds on $B_1$ and $B_2$ in terms of $p$ and $w$. Observe that
\begin{equation*}
B_1 \leq w^{-3/2}\sum_{i=1}^{w} \bb{E}\paren{\max_{\paren{r,s}\in\cc{K}_p}\abs{\frac{ \paren{\boldsymbol{Y}_{t+i}}_r\paren{\boldsymbol{Y}_{t+i}}_s-\boldsymbol{\Omega}_{rs}}{\sqrt{\paren{\boldsymbol{\Omega}_{rr}\boldsymbol{\Omega}_{ss} + \boldsymbol{\Omega}^2_{rs}}}}}^3} = \frac{1}{\sqrt{w}} \bb{E}\paren{\max_{\paren{r,s}\in\cc{K}_p}\abs{\frac{ \paren{\boldsymbol{Y}_{t+1}}_r\paren{\boldsymbol{Y}_{t+1}}_s-\boldsymbol{\Omega}_{rs}}{\sqrt{\paren{\boldsymbol{\Omega}_{rr}\boldsymbol{\Omega}_{ss} + \boldsymbol{\Omega}^2_{rs}}}}}^3}.
\end{equation*}
Since all diagonal entries of $\boldsymbol{\Omega}$ are one, then the triangle inequality leads to
\begin{equation}\label{B1UppBnd}
B_1 \lesssim \frac{1}{\sqrt{w}}\paren{1\vee \bb{E}\paren{\max_{\paren{r,s}\in\cc{K}_p} \abs{\paren{\boldsymbol{Y}_{t+1}}_r\paren{\boldsymbol{Y}_{t+1}}_s}^3}} \leq \frac{\bb{E}\LpNorm{\boldsymbol{Y}_{t+1}}{\infty}^6 }{\sqrt{w}}
\end{equation}
Thus, according to Lemma \ref{UppBndmomentsSubGauss} $B_1\lesssim \frac{\log^3 p}{\sqrt{w}}$.  Now, applying Theorem \ref{ThmCher} gives us the following inequality for any $\delta > 0$.
\begin{eqnarray}\label{eq1Thm1}
\bb{P}\paren{\abs{\max_{\paren{r,s}\in\cc{K}_p} \boldsymbol{E}_{rs}-\max_{\paren{r,s}\in\cc{K}_p} \boldsymbol{G}_{rs}}> \delta} &=& \bb{P}\paren{\abs{\max_{1\leq j\leq q} \sum_{i=1}^{w} \boldsymbol{U}_{ij}-\max_{1\leq j\leq q} \sum_{i=1}^{w} \boldsymbol{Z}_{ij}}> \delta}\nonumber\\
&\lesssim& \frac{\log w}{w}\vee \frac{B_1\log p}{\delta^2} \vee \frac{B_2\log^2 p}{\delta} \asymp \frac{\log w}{w}\vee \frac{\log^4 p}{\delta^2\sqrt{w}} \vee \frac{B_2\log^2 p}{\delta}
\end{eqnarray}
Since $\frac{\log^8 p}{w}\rightarrow 0$, then Eq. \eqref{eq1Thm1} implies that  $\max_{\paren{r,s}\in\cc{K}_p} \boldsymbol{E}_{rs}-\max_{\paren{r,s}\in\cc{K}_p} \boldsymbol{G}_{rs}\cp{\bb{P}} 0$, if $B_2\log^2p\rightarrow 0$. We conclude the proof by  controlling $B_2$ from above. We use Lemma $1$ in \cite{chernozhukov2015comparison}, which we state next for completeness.
	
\begin{lem}\label{LemCher}
$\set{\boldsymbol{U}_i}^w_{i=1}$ be a set of i.i.d. centered random vectors in $\bb{R}^p$ with covariance matrix $\boldsymbol{\Lambda}$ and the fourth moment $\bb{E}\boldsymbol{U}^4_{1j} = M_j$. There exists a bounded constant $C$ such that
\begin{equation*}
B_2 = \bb{E}\paren{\max_{1\leq j, k\leq q} \abs{\sum_{i=1}^{w} \boldsymbol{U}_{ij}\boldsymbol{U}_{ik}-w\boldsymbol{\Lambda}_{jk}}} \leq C\sqrt{\log p}\brac{\sqrt{w\max_{1\leq j\leq p}M_j}+\sqrt{\log p \;\bb{E}\paren{\max_{1\leq i\leq w}\max_{1\leq j\leq p}\boldsymbol{U}^4_{ij}}}}
\end{equation*}
\end{lem}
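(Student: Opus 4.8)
The plan is to recognize Lemma~\ref{LemCher} as an instance of the standard maximal inequality for the expected maximum of a high-dimensional sum of i.i.d.\ centered random vectors. Index coordinates by ordered pairs $(j,k)$ with $1\le j,k\le p$, and set $V_{i,(j,k)}\coloneqq \boldsymbol{U}_{ij}\boldsymbol{U}_{ik}-\boldsymbol{\Lambda}_{jk}$. These are i.i.d.\ in $i$ and centered, there are at most $p^2$ of them, and $\log\paren{p^2}\asymp\log p$; hence
\[
B_2=\bb{E}\brac{\max_{1\le j,k\le p}\abs{\textstyle\sum_{i=1}^{w} V_{i,(j,k)}}}
\]
is the expected maximum of at most $p^2$ mean-zero partial sums, so the dimension enters only through $\sqrt{\log p}$.

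The first step is to invoke the classical bound of this type (as used in \cite{chernozhukov2015comparison} and the companion works it builds on): for i.i.d.\ centered $\bb{R}^N$-valued $\set{V_i}_{i=1}^{w}$,
\[
\bb{E}\brac{\max_{1\le\ell\le N}\abs{\textstyle\sum_{i=1}^{w}V_{i\ell}}}\ \le\ C\paren{\sqrt{\log N}\,\sqrt{w\,\sigma^2}\ +\ \log N\,\sqrt{\bb{E}\max_{1\le i\le w}\max_{1\le\ell\le N}V_{i\ell}^2}},\qquad \sigma^2\coloneqq\max_{\ell}\bb{E}V_{1\ell}^2 .
\]
If one wanted a self-contained argument: symmetrize with Rademacher multipliers, truncate each $V_{i\ell}$ at a level $u$; on the truncated part, a Bernstein tail bound together with a union bound over the $N$ coordinates yields the $\sqrt{w\sigma^2\log N}+u\log N$ behaviour, while the overflow is controlled crudely by $w\,\bb{E}\max_{i,\ell}\abs{V_{i\ell}}\mathbf{1}\set{\abs{V_{i\ell}}>u}$ and $u$ is then optimized; alternatively Hoffmann--J{\o}rgensen's inequality ties the $L^1$ maximal quantity directly to $\sqrt{w\sigma^2\log N}$ and $\bb{E}\max_{i,\ell}\abs{V_{i\ell}}$. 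I would simply cite this inequality rather than reprove it.

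It then remains to feed in the two model-specific quantities. By Cauchy--Schwarz, $\bb{E}V_{1,(j,k)}^2\le\bb{E}\paren{\boldsymbol{U}_{1j}^2\boldsymbol{U}_{1k}^2}\le\sqrt{M_jM_k}\le\max_{1\le j\le p}M_j$, so $\sigma^2\le\max_j M_j$. For the envelope, AM--GM gives $\abs{\boldsymbol{U}_{ij}\boldsymbol{U}_{ik}}\le\tfrac{1}{2}\paren{\boldsymbol{U}_{ij}^2+\boldsymbol{U}_{ik}^2}\le\max_{1\le j\le p}\boldsymbol{U}_{ij}^2$, hence $V_{i,(j,k)}^2\le 2\max_j\boldsymbol{U}_{ij}^4+2\boldsymbol{\Lambda}_{jk}^2$; and $\boldsymbol{\Lambda}_{jk}^2\le\bb{E}\boldsymbol{U}_{1j}^2\,\bb{E}\boldsymbol{U}_{1k}^2\le\sqrt{M_jM_k}\le\max_j M_j\le\bb{E}\max_{i,j}\boldsymbol{U}_{ij}^4$, which is dominated, so $\bb{E}\max_{i,(j,k)}V_{i,(j,k)}^2\lesssim\bb{E}\max_{1\le i\le w}\max_{1\le j\le p}\boldsymbol{U}_{ij}^4$. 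Substituting $\sigma^2$, this envelope bound, and $\log N\asymp\log p$ into the maximal inequality and factoring out $\sqrt{\log p}$ produces exactly $B_2\le C\sqrt{\log p}\paren{\sqrt{w\max_j M_j}+\sqrt{\log p\;\bb{E}\max_{i,j}\boldsymbol{U}_{ij}^4}}$.

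The main obstacle is the maximal inequality itself in the displayed sharp form: the delicate point is calibrating the split between the sub-Gaussian term $\sqrt{w\sigma^2\log N}$ and the truncation remainder $\log N\cdot(\text{envelope})$, which requires a carefully chosen truncation level and a good Bernstein-type tail estimate for the bounded part (or, equivalently, a careful application of Hoffmann--J{\o}rgensen). The two estimates that turn this generic inequality into the stated bound --- the variance bound via Cauchy--Schwarz and the envelope bound via AM--GM --- are entirely elementary.
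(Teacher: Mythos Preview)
The paper does not prove this lemma at all; it is quoted verbatim as Lemma~1 of \cite{chernozhukov2015comparison} and used as a black box. Your proposal is exactly the natural unpacking of that citation: set $V_{i,(j,k)}=\boldsymbol{U}_{ij}\boldsymbol{U}_{ik}-\boldsymbol{\Lambda}_{jk}$, apply the general maximal inequality for i.i.d.\ centered sums from the same reference, and then specialize via the two elementary bounds $\sigma^2\le\max_jM_j$ (Cauchy--Schwarz) and $\bb{E}\max_{i,(j,k)}V_{i,(j,k)}^2\lesssim\bb{E}\max_{i,j}\boldsymbol{U}_{ij}^4$ (AM--GM plus the crude bound on $\boldsymbol{\Lambda}_{jk}^2$). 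So there is no difference in approach --- both you and the paper defer to \cite{chernozhukov2015comparison}; you simply spell out the reduction that the paper leaves implicit.
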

The formulation of $\boldsymbol{U}_i$ vectors in Eq. \eqref{UVec} implies that $\bb{E}\boldsymbol{U}^4_{ij} \asymp w^{-2}$ (notice the existence of $\sqrt{w}$ in the denominator of $\boldsymbol{U}_i$). So, the upper bound on $B_2$ in Lemma \ref{LemCher} implies that
\begin{equation*}
B_2 \lesssim \sqrt{\frac{w}{w^2}\log p} + \log p\sqrt{\bb{E}\paren{\max_{1\leq i\leq w}\max_{1\leq j\leq p}\boldsymbol{U}^4_{ij}}} \asymp \sqrt{\frac{\log p}{w}} \vee \log p\sqrt{\bb{E}\paren{\max_{1\leq i\leq w}\max_{1\leq j\leq p}\boldsymbol{U}^4_{ij}}}.
\end{equation*}
An analogous technique as in Eq. \eqref{B1UppBnd}, as well as applying Lemma \ref{UppBndmomentsSubGauss}, implies that
\begin{equation*}
\bb{E}\paren{\max_{1\leq i\leq w}\max_{1\leq j\leq p}\boldsymbol{U}^4_{ij}} \lesssim \bb{E}\paren{\frac{\LpNorm{\boldsymbol{Y}_{t+1}}{\infty}^2}{\sqrt{w}}}^4 = \frac{\bb{E}\LpNorm{\boldsymbol{Y}_{t+1}}{\infty}^8}{w^2} \asymp\frac{\log^4 p}{w^2}.
\end{equation*}
Thus, $B_2$ can be controlled from above by
\begin{equation*}
B_2 \lesssim \sqrt{\frac{\log p}{w}} \vee \log p \sqrt{\frac{\log^4 p}{w^2}} = \sqrt{\frac{\log p}{w}} \vee \frac{\log^3 p}{w}.
\end{equation*}
Hence, 
\begin{equation*}
B_2\log^2 p \lesssim \sqrt{\frac{\log p}{w}} \log^2 p \vee \frac{\log^5 p}{w} = \sqrt{\frac{\log^5 p}{w}} \vee \frac{\log^5 p}{w} \rightarrow 0,
\end{equation*}
which concludes the proof of Claim \ref{Claim2Thm0}.
\end{proof}

\begin{proof}[Proof of Theorem \ref{Thm1}]
We remind the definition of $\zeta_{\pi_0, p, w}$ in Eq. \eqref{zeta}.
\begin{equation*}
\bb{P}\paren{ \abs{\vartheta_w} \geq \zeta_{\pi_0, p, w}} = \frac{-2}{p\paren{p+1}}\log\paren{1-\pi_0}.
\end{equation*}
Here $\vartheta_w$ denotes the standardized inner product of two independent standard Gaussian random vectors in $\bb{R}^w$ (see Definition \ref{InnerProdGauss}). Notice that
\begin{equation*}
\bb{P}_{\FA}\paren{T_t} = \bb{P}\paren{\max_{\paren{r,s}\in\cc{K}_p} \abs{\boldsymbol{E}_{rs}}\geq \zeta_{\pi_0, p, w}}.
\end{equation*}
The proof is based upon Theorem $1$ in \cite{galambos1988variants}, which we state next for completeness. We also refer the reader to \cite{galambos1988variants, galambos1972distribution} for further technicalities.
	
\begin{thm}[\emph{Galambos Theorem} \cite{galambos1988variants}]\label{GalambosThm}
Let $\set{V_i}^q_{i=1}$ be a set of (possibly dependent) random variables. Consider the graph $\bb{G} = \paren{\bb{V},\cc{E}_q}$ with $\bb{V} = \set{1,\ldots, q}$, no self-loop, and $\paren{r,s}\notin\cc{E}_q$ if an only if $V_r \independent V_t$. For any fixed $\pi_0\in\paren{0,1}$, select $\xi_{q,\pi_0}$ such that
\begin{equation}\label{cond1Galambos}
\lim\limits_{q\rightarrow\infty}\sum_{j=1}^{q} \bb{P}\paren{V_j \geq \xi_{q,\pi_0}} = \log\paren{\frac{1}{1-\pi_0}}.
\end{equation}
Assume that the following conditions are satisfied as $q\rightarrow\infty$.
\begin{enumerate}
\item $\abs{\cc{E}_q} = o\paren{q^2}$.
\item There is a bounded constant $K$ such that $\limsup_{q\rightarrow\infty} \Bigbrac{\max_{1\leq j\leq q}\;q\bb{P}\paren{V_j \geq \xi_{q,\pi_0}}} \leq K.$ 
\item $\limsup_{q\rightarrow\infty} \Bigbrac{\sum_{r, s\in\cc{E}_q} \bb{P}\paren{V_r \wedge V_s \geq \xi_{q,\pi_0}}} = 0$.
\end{enumerate}
Then, 
\begin{equation*}
\lim\limits_{q\rightarrow\infty}\bb{P}\paren{\max_{1\leq j\leq q} V_j \geq \xi_{q,\pi_0}} = \pi_0
\end{equation*}
\end{thm}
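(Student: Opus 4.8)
The plan is to read the assertion as a Poisson limit theorem and prove it by the Chen--Stein method for dependency graphs, letting the three hypotheses kill the approximation error. Write $A_j \coloneqq \set{V_j \geq \xi_{q,\pi_0}}$, $p_j \coloneqq \bb{P}\paren{A_j}$, $W_q \coloneqq \sum_{j=1}^{q}\bbM{1}\paren{A_j}$ and $\lambda_q \coloneqq \sum_{j=1}^{q}p_j$, so that hypothesis \eqref{cond1Galambos} is exactly $\lambda_q \to \lambda \coloneqq \log\paren{\tfrac{1}{1-\pi_0}}$. Since $\set{\max_{1\leq j\leq q}V_j \geq \xi_{q,\pi_0}} = \set{W_q \geq 1}$, it suffices to prove $\bb{P}\paren{W_q = 0} \to e^{-\lambda}$, as this yields $\bb{P}\paren{\max_j V_j \geq \xi_{q,\pi_0}} \to 1 - e^{-\lambda} = \pi_0$. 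By hypothesis the graph $\bb{G}=\paren{\bb{V},\cc{E}_q}$ is a genuine dependency graph for $\set{\bbM{1}\paren{A_j}}_{j=1}^{q}$: each $\bbM{1}\paren{A_j}$ is independent of $\set{\bbM{1}\paren{A_i}:\; i \notin \set{j}\cup\partial j}$, where $\partial j$ denotes the set of neighbors of $j$ in $\bb{G}$.

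I would then apply the Chen--Stein / Arratia--Goldstein--Gordon Poisson-approximation bound for events carrying a dependency graph. Taking $\set{j}\cup\partial j$ as the dependency neighborhood of $j$, and noting that the usual conditional-expectation term vanishes because $\bb{G}$ is an honest dependency graph, this gives, with $d_{\mathrm{TV}}$ the total-variation distance between the law of $W_q$ and a $\mathrm{Poisson}\paren{\lambda_q}$ law,
\begin{equation*}
d_{\mathrm{TV}}\paren{W_q,\,\mathrm{Poisson}\paren{\lambda_q}} \;\leq\; 2\paren{b_1+b_2},\qquad b_1 \coloneqq \sum_{j=1}^{q}\sum_{i\in\set{j}\cup\partial j}p_ip_j,\qquad b_2 \coloneqq \sum_{j=1}^{q}\sum_{i\in\partial j}\bb{P}\paren{A_i\cap A_j}.
\end{equation*}
It remains to send $b_1$ and $b_2$ to zero. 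Condition 2 gives $\max_{1\leq j\leq q}p_j \leq K/q$ for all large $q$, whence $\sum_j p_j^2 \leq \paren{K/q}\lambda_q \to 0$ and $\sum_{\paren{r,s}\in\cc{E}_q}p_rp_s \leq \paren{K/q}^2\abs{\cc{E}_q} \to 0$ by Condition 1 ($\abs{\cc{E}_q}=o\paren{q^2}$); since $b_1 = \sum_j p_j^2 + 2\sum_{\paren{r,s}\in\cc{E}_q}p_rp_s$, we get $b_1\to 0$. Also $b_2 = 2\sum_{\paren{r,s}\in\cc{E}_q}\bb{P}\paren{V_r\wedge V_s \geq \xi_{q,\pi_0}}$, which is precisely the quantity forced to $0$ by Condition 3.

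Putting the pieces together, $\abs{\bb{P}\paren{W_q = 0} - e^{-\lambda_q}} \leq 2\paren{b_1+b_2} \to 0$ and $e^{-\lambda_q}\to e^{-\lambda}$, hence $\bb{P}\paren{W_q = 0}\to e^{-\lambda}$, as required. An alternative, closer to Galambos' original argument, would avoid Chen--Stein and instead show the binomial moments $\bb{E}{W_q \choose k} = \sum_{\abs{I}=k}\bb{P}\paren{\bigcap_{i\in I}A_i}$ converge to $\lambda^k/k!$ for each fixed $k\geq 1$ --- separating the $k$-subsets $I$ with no internal $\cc{E}_q$-edge, where the intersection factorizes and Conditions 1--2 control the corrections from repeated indices and cross-edges, from those containing an edge, which are absorbed by Condition 3 --- and then concluding by a Renyi-type graph-sieve inclusion--exclusion. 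The awkward point in that route is bounding the contribution of $k$-subsets carrying several edges; the Chen--Stein formulation circumvents this because its error terms involve pairs only. I do not foresee a genuine obstacle once Conditions 1--3 are granted: the work is essentially bookkeeping, the one place needing care being the identification of $b_1,b_2$ with the three hypotheses and the observation that a true dependency graph annihilates the conditional-expectation error term. (In applications of Theorem \ref{GalambosThm} the real difficulty is verifying Conditions 1--3 for a concrete graph-dependent array, which is exactly what this theorem abstracts away.)
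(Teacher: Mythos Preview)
The paper does not actually prove Theorem \ref{GalambosThm}; it is quoted from \cite{galambos1988variants} for completeness, and the only hint about the original argument is the remark (in the Introduction) that Galambos' proof rests on R\'enyi's generalized inclusion--exclusion / graph-sieve formula. Your ``alternative'' paragraph --- convergence of binomial moments $\bb{E}{W_q\choose k}\to\lambda^k/k!$ via a sieve that separates edge-free $k$-subsets from those containing an $\cc{E}_q$-edge --- is precisely the route taken in \cite{galambos1988variants,galambos1972distribution}. Your primary argument via Chen--Stein / Arratia--Goldstein--Gordon is a genuinely different and more streamlined proof: it replaces the combinatorics of $k$-wise intersections by the two pairwise error terms $b_1,b_2$, and your identification of Conditions 1--3 with $b_1\to 0$ and $b_2\to 0$ is clean and correct. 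The trade-off is that Chen--Stein gives a quantitative total-variation bound for free, whereas the sieve argument is more self-contained and closer to first principles.

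One point does deserve a flag. You assert that ``by hypothesis $\bb{G}$ is a genuine dependency graph,'' i.e.\ that $\bbM{1}\paren{A_j}$ is independent of the \emph{entire collection} $\set{\bbM{1}\paren{A_i}:i\notin\set{j}\cup\partial j}$. The theorem as stated in the paper only says $\paren{r,s}\notin\cc{E}_q\Leftrightarrow V_r\independent V_s$, which is \emph{pairwise} independence. Pairwise independence alone does not make $\bb{G}$ a dependency graph, so as literally written the hypothesis does not kill the $b_3$ term, and the same gap afflicts the sieve route (factorization of $\bb{P}\paren{\bigcap_{i\in I}A_i}$ over edge-free $I$ needs joint, not pairwise, independence). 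In Galambos' original formulation the graph \emph{is} assumed to be a dependency graph in the joint sense, and in the paper's application --- entries of a Gaussian quadratic form --- this stronger property does hold because of the Gaussian structure. So your reading is the intended one, but it is worth noting that you are using slightly more than the paper's statement gives you on its face.
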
 
	
The rest of the proof is devoted to verifying the conditions in Theorem \ref{GalambosThm}. For any $u\in\set{1,\ldots, p}$, $\cc{H}_u$ denotes nodes connected to $u$ in $\cc{G}_t$ (pre-change GGM), i.e., $\cc{H}_u = \set{v\ne u: \boldsymbol{\Omega}_{uv} \ne 0}$. We know from Assumption \ref{AssuCLT} that $\abs{\cc{H}_u} \leq d_{\max}$. Let $q=\abs{\cc{K}_p}$. Construct the graph $\bb{G} = \paren{\cc{K}_p, \cc{E}_q}$ by connecting an edge between each pair of dependent random variables in $\set{\boldsymbol{E}_{rs}: \paren{r,s}\in\cc{K}_p}$. We first claim that the maximum degree of $\bb{G}$ is no more than $2pd_{\max}$. For proving this claim, we use two facts. First, $\abs{\cc{H}_r \cup \cc{H}_s} \leq 2d_{\max}$ for any $\paren{r,s}\in\cc{K}_p$. Further,
\begin{equation*}
\boldsymbol{E}_{rs} \independent \boldsymbol{E}_{uv},\;\forall\paren{u,v}\in\cc{K}_p\;\;\suchthat\;\; u, v \notin \cc{H}_r \cup \cc{H}_s.
\end{equation*}
Namely, $\paren{u,v}$ is connected to $\paren{r,s}$ only if at least one of $u$ or $v$ belong to $\cc{H}_r \cup \cc{H}_s$. Hence, the first condition in Theorem \ref{GalambosThm} obviously holds, since
\begin{equation*}
\abs{\cc{E}_q} \leq 2pd_{\max}\abs{\cc{K}_p} \asymp p^3d_{\max} = o\paren{q^2}.
\end{equation*}

Before verifying the other conditions in Theorem \ref{GalambosThm}, we require to study the asymptotic behaviour of critical value $\zeta_{\pi_0, p, w}$. Recall that we defined $\zeta_{\pi_0,p,w}$ by 
\begin{equation*}
\bb{P}\paren{\vartheta_w\geq \zeta_{\pi_0,p,w}} = \frac{-\log\paren{1-\pi_0}}{p\paren{p+1}}.
\end{equation*}
We also discussed in Remark \ref{ChernoffBound} that $\zeta^2_{\pi_0,p,w} = \cc{O}\paren{\log p}$. Our objective is to prove the following property for $\zeta_{\pi_0, p, w}$, which is more informative than the discussion in Remark \ref{ChernoffBound}.
\begin{equation}\label{Eq7Thm1}
\mbox{If } \log^3 p = o\paren{w}, \mbox{ then} \lim\limits_{w, p\rightarrow\infty} \frac{\zeta^2_{\pi_0,p,w}}{4\log p} = 1,
\end{equation}
As we assume that $\log^3 p = o\paren{w}$, then obviously $w^{-1}\zeta^6_{\pi_0, p, w}\rightarrow 0$. Thus, Corollary \ref{GaussInnerProdUppBnd} implies that 
\begin{equation*}
\bb{P}\paren{\vartheta_w\geq \zeta_{\pi_0,p,w}} = \frac{-\log\paren{1-\pi_0}}{p\paren{p+1}} \sim \frac{\exp\paren{-\frac{\zeta^2_{\pi_0,p,w}}{2}}}{\zeta_{\pi_0,p,w}\sqrt{2\pi}}.
\end{equation*}
So, $\zeta^2_{\pi_0,p,w} \sim 4\log p-2\log\log p+C_{\pi_0}$ for a bounded scalar $C_{\pi_0}$, which validates asymptotic identity \eqref{Eq7Thm1}. We are now ready for verifying the two other conditions in Theorem \ref{GalambosThm}.
	
\begin{clawithinpf}\label{Claim1Thm1}
There is scalar $K_{\pi_0} < \infty$ such that $\max_{\paren{r,s}\in\cc{K}_p} \bb{P}\paren{\abs{\boldsymbol{E}_{rs}}\geq \zeta_{\pi_0, p, w}} \leq K_{\pi_0}p^{-2}$.
\end{clawithinpf}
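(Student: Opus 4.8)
The plan is to partition $\cc{K}_p$ according to whether $\boldsymbol{\Omega}_{rs}=0$, to handle the dominant block exactly via the definition of $\zeta_{\pi_0,p,w}$, and to dispatch the remaining $\cc{O}\paren{pd_{\max}}$ pairs (the diagonal ones and the edge pairs of $\cc{G}_t$) through a sharp moderate-deviation tail bound. First recall that $\boldsymbol{E}_{rs}=w^{-1/2}\sum_{l=1}^{w}\xi^{\paren{rs}}_l$, where $\xi^{\paren{rs}}_l\coloneqq\paren{\paren{\boldsymbol{Y}_l}_r\paren{\boldsymbol{Y}_l}_s-\boldsymbol{\Omega}_{rs}}/\sqrt{1+\boldsymbol{\Omega}^2_{rs}}$ are i.i.d.\ in $l$, of mean zero and unit variance. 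Since $\cov\paren{\paren{\boldsymbol{Y}_l}_r,\paren{\boldsymbol{Y}_l}_s}=\boldsymbol{\Omega}_{rs}$, one has the exact transform $\bb{E}\,e^{\theta\paren{\boldsymbol{Y}_l}_r\paren{\boldsymbol{Y}_l}_s}=\paren{1-2\theta\boldsymbol{\Omega}_{rs}-\theta^2\paren{1-\boldsymbol{\Omega}^2_{rs}}}^{-1/2}$ for $\abs{\theta}$ below an explicit threshold, and Assumption \ref{AssuCLT} ($\boldsymbol{\Omega}_{rr}=1$, $\abs{\boldsymbol{\Omega}_{rs}}\leq r_{\max}<1$) makes that threshold — hence the sub-exponential norm of $\xi^{\paren{rs}}_1$ — bounded by a constant depending only on $r_{\max}$, uniformly over $\paren{r,s}\in\cc{K}_p$ and over all admissible $\boldsymbol{\Omega}$.

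For $r\neq s$ with $\boldsymbol{\Omega}_{rs}=0$, joint Gaussianity and vanishing covariance force $\paren{\paren{\boldsymbol{Y}_1}_r,\dots,\paren{\boldsymbol{Y}_w}_r}$ and $\paren{\paren{\boldsymbol{Y}_1}_s,\dots,\paren{\boldsymbol{Y}_w}_s}$ to be independent standard Gaussian vectors in $\bb{R}^w$, so $\boldsymbol{E}_{rs}\eqd\vartheta_w$, and the defining equation \eqref{zeta} gives $\bb{P}\paren{\abs{\boldsymbol{E}_{rs}}\geq\zeta_{\pi_0,p,w}}=\frac{2}{p\paren{p+1}}\log\paren{\frac{1}{1-\pi_0}}\leq\frac{2}{p^2}\log\paren{\frac{1}{1-\pi_0}}$, already of the asserted form. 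For the remaining pairs $\boldsymbol{E}_{rs}$ is non-Gaussian but still a normalized sum of $w$ i.i.d.\ unit-variance sub-exponential variables of uniformly bounded norm — indeed $\boldsymbol{E}_{rs}$ is a linear combination of at most two independent copies of $\paren{\chi^2_w-w}/\sqrt{w}$, with overall variance one — and here a crude Chernoff bound $\bb{P}\paren{\abs{\boldsymbol{E}_{rs}}\geq\zeta_{\pi_0,p,w}}\lesssim e^{-\zeta^2_{\pi_0,p,w}/2}$ does not suffice, because $e^{-\zeta^2_{\pi_0,p,w}/2}\asymp p^{-2}\sqrt{\log p}$ by \eqref{zeta}, Lemma \ref{ChernoffBound} and Corollary \ref{GaussInnerProdUppBnd}. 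Instead I would invoke a Cram\'er-type moderate-deviation estimate for $\boldsymbol{E}_{rs}$, uniform over $\boldsymbol{\Omega}_{rs}\in\brac{-r_{\max},r_{\max}}$ — the natural analogue of Corollary \ref{GaussInnerProdUppBnd}, whose hypothesis $\zeta^6_{\pi_0,p,w}=o\paren{w}$ is precisely Condition (b) together with $\zeta^2_{\pi_0,p,w}=\cc{O}\paren{\log p}$ from Lemma \ref{ChernoffBound} — to obtain $\bb{P}\paren{\abs{\boldsymbol{E}_{rs}}\geq\zeta_{\pi_0,p,w}}\leq\frac{C}{\zeta_{\pi_0,p,w}}e^{-\zeta^2_{\pi_0,p,w}/2}\paren{1+o\paren{1}}$ with $C=C\paren{r_{\max}}$. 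Finally, applying Corollary \ref{GaussInnerProdUppBnd} to $\vartheta_w$ itself and \eqref{zeta} gives $\frac{e^{-\zeta^2_{\pi_0,p,w}/2}}{\zeta_{\pi_0,p,w}\sqrt{2\pi}}\sim\bb{P}\paren{\vartheta_w\geq\zeta_{\pi_0,p,w}}=\frac{1}{p\paren{p+1}}\log\paren{\frac{1}{1-\pi_0}}$, whence $\zeta^{-1}_{\pi_0,p,w}e^{-\zeta^2_{\pi_0,p,w}/2}\leq C'p^{-2}$ for all large $p,w$; chaining the last two bounds controls $\bb{P}\paren{\abs{\boldsymbol{E}_{rs}}\geq\zeta_{\pi_0,p,w}}$ by $C''p^{-2}$ on the non-Gaussian pairs. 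Setting $K_{\pi_0}=\max\set{2\log\paren{\frac{1}{1-\pi_0}},\,C''}$ and maximizing over $\paren{r,s}\in\cc{K}_p$ closes the claim.

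The step I expect to be the main obstacle is the sharp two-sided tail estimate for $\boldsymbol{E}_{rs}$ on the diagonal and edge pairs. Unlike $\vartheta_w$, these variables are asymmetric, and their upper tail is produced by a rare \emph{joint} event (one copy of $\paren{\chi^2_w-w}/\sqrt w$ being large while the other is small), so the tail cannot be split additively, and a Chernoff/MGF argument cannot produce the extra factor $\zeta^{-1}_{\pi_0,p,w}$ needed to beat $p^{-2}\sqrt{\log p}$. Obtaining that refinement requires a saddle-point (Bahadur--Rao / Edgeworth) analysis of the Laplace transform of a linear combination of two independent copies of $\paren{\chi^2_w-w}/\sqrt w$, with error terms uniform in the correlation parameter; it is exactly here that Condition (b), $\log^3 p=o\paren{w}$, is used, since it places $\zeta_{\pi_0,p,w}\asymp\sqrt{\log p}$ inside the Cram\'er zone $\zeta=o\paren{w^{1/6}}$ where the Gaussian tail approximation is valid.
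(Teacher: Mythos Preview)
Your approach is correct and is essentially the paper's: the sharp moderate-deviation estimate you call for is precisely Theorem \ref{MainThmApp}, which gives $\bb{P}\paren{\abs{V_n\paren{r}}\geq t_n}\sim 2\bar\Phi\paren{t_n}$ for every $r\in\brac{-1,1}$ under $t^6_n=o\paren{n}$, so the partition into $\boldsymbol{\Omega}_{rs}=0$ versus $\boldsymbol{\Omega}_{rs}\ne 0$ is unnecessary. The paper simply applies Theorem \ref{MainThmApp} to every $\paren{r,s}\in\cc{K}_p$ and to $\vartheta_w$ simultaneously, then divides the two asymptotic identities to obtain $\max_{\paren{r,s}\in\cc{K}_p}\bb{P}\paren{\abs{\boldsymbol{E}_{rs}}\geq\zeta_{\pi_0,p,w}}\leq 2\bb{P}\paren{\abs{\vartheta_w}\geq\zeta_{\pi_0,p,w}}$ and hence $K_{\pi_0}=4\log\frac{1}{1-\pi_0}$.
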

Note that Claim \ref{Claim1Thm1} is equivalent to the second condition in Theorem \ref{GalambosThm}. For proving Claim \ref{Claim1Thm1}, choose an arbitrary $\paren{r,s}\in\cc{K}_p$. We give a simpler formulation for $\boldsymbol{E}_{rs}$. Let 
\begin{equation*}
\boldsymbol{U} = \brac{\paren{\boldsymbol{Y}_1}_r,\ldots, \paren{\boldsymbol{Y}_w}_r}^\top,\quad\mbox{and}\quad \boldsymbol{U'} = \brac{\paren{\boldsymbol{Y}_1}_s,\ldots, \paren{\boldsymbol{Y}_w}_s}^\top.
\end{equation*}
$\boldsymbol{U}$ and $\boldsymbol{U'}$ are standard Gaussian random vectors with $\cov\paren{\boldsymbol{U}_i, \boldsymbol{U'}_j} = \boldsymbol{\Omega}_{rs}\bbM{1}_{\brac{i=j}}$. Notice that
\begin{equation*}
\boldsymbol{E}_{rs} = \frac{\sum_{l=1}^{w} \paren{\boldsymbol{U}_l\boldsymbol{U'}_l-\boldsymbol{\Omega}_{rs}} }{\sqrt{w\paren{1+\boldsymbol{\Omega}^2_{rs}}}}.
\end{equation*}
In words, $\boldsymbol{E}_{rs}$ is a standardized inner product of two correlated Gaussian vectors. We have studied the non-asymptotic properties of such an object in Appendix \ref{AppendixB}. Theorem \ref{MainThmApp} implies that if $\zeta^6_{\pi_0, p, w}$ grows to infinity at a slower rate than $w$, which we know that it holds, then 
\begin{equation}\label{Eq0Thm1}
\lim\limits_{w, p\rightarrow\infty}\frac{\bb{P}\paren{\abs{\boldsymbol{E}_{rs}}\geq \zeta_{\pi_0, p, w}}}{2\bb{P}\paren{Z\geq \zeta_{\pi_0, p, w}}} \rightarrow 1,\quad\mbox{and}\quad \lim\limits_{w, p\rightarrow\infty}\frac{\bb{P}\paren{\abs{\vartheta_w}\geq \zeta_{\pi_0, p, w}}}{2\bb{P}\paren{Z\geq \zeta_{\pi_0, p, w}}} \rightarrow 1.
\end{equation}
Combining the two limiting identities in Eq. \eqref{Eq0Thm1} implies that 
\begin{eqnarray*}
\max_{\paren{r,s}\in\cc{K}_p} \bb{P}\paren{\abs{\boldsymbol{E}_{rs}}\geq \zeta_{\pi_0, p, w}} &\leq& 2\bb{P}\paren{\abs{\vartheta_w}\geq \zeta_{\pi_0, p, w}} = \frac{4}{p\paren{p+1}}\log\paren{\frac{1}{1-\pi_0}} \\
&\leq& \frac{4}{p^2}\log\paren{\frac{1}{1-\pi_0}}.
\end{eqnarray*}
as $w, p\rightarrow\infty$. Thus, Claim \ref{Claim1Thm1} holds with $K_{\pi_0} = -4\log\paren{1-\pi_0}$. Next, we verify condition \eqref{cond1Galambos} in Galambos Theorem. Namely, we want to show that
\begin{equation}\label{Eq1Thm1}
\lim\limits_{p\rightarrow\infty}\sum_{{\paren{r,s}\in\cc{K}_p}}\bb{P}\paren{\abs{\boldsymbol{E}_{rs}}\geq \zeta_{\pi_0, p, w}} = \log\paren{\frac{1}{1-\pi_0}}.
\end{equation}
Decompose $\cc{K}_p$ into two non-overlapping parts $\cc{K}'_p$ and $\cc{K}''_p$ defined by
\begin{equation*}
\cc{K}'_p = \set{\paren{u, v}\in\cc{K}_p:\; \boldsymbol{\Omega}_{uv} \ne 0}, \quad \cc{K}''_p = \set{\paren{u, v}\in\cc{K}_p:\; \boldsymbol{\Omega}_{uv} = 0}
\end{equation*}
Based on the bounded degree Assumption \ref{AssuCLT}, $\abs{\cc{K}'_p} \leq pd_{\max}$. It is also obvious that $\boldsymbol{E}_{rs} \eqd \vartheta_w$ for any $\paren{r,s}\in\cc{K}''_p$. Therefore,
\begin{equation}
\bb{P}\paren{\abs{\boldsymbol{E}_{rs}}\geq \zeta_{\pi_0, p, w}} = \bb{P}\paren{ \abs{\vartheta_w} \geq \zeta_{\pi_0, p, w}} = \frac{-2}{p\paren{p+1}}\log\paren{1-\pi_0},\quad \forall\; \paren{r,s}\in\cc{K}''_p.
\end{equation}
Thus,
\begin{equation}\label{Eq2Thm1}
\lim\limits_{p\rightarrow\infty}\sum_{{\paren{r,s}\in\cc{K}''_p}}\bb{P}\paren{\abs{\boldsymbol{E}_{rs}}\geq \zeta_{\pi_0, p, w}} = \lim\limits_{p\rightarrow\infty} \abs{\cc{K}''_p}\bb{P}\paren{ \abs{\vartheta_w} \geq \zeta_{\pi_0, p, w}}
= -\log\paren{1-\pi_0}. 
\end{equation}
The last identity is obtained from the fact that $\abs{\cc{K}''_p} \asymp \abs{\cc{K}_p} = p\paren{p+1}/2$. Further, Claim \ref{Claim1Thm1} ensures uniform boundedness of $p^2\bb{P}\paren{\abs{\boldsymbol{E}_{rs}}\geq \zeta_{\pi_0, p, w}}$ over all  $\paren{r,s}\in\cc{K}'_p$. Thus
\begin{eqnarray}\label{Eq3Thm1}
\limsup\limits_{p\rightarrow\infty}\sum_{{\paren{r,s}\in\cc{K}'_p}}\bb{P}\paren{\abs{\boldsymbol{E}_{rs}}\geq \zeta_{\pi_0, p, w}} &\leq& \lim\limits_{p\rightarrow\infty} \abs{\cc{K}'_p}\max_{\paren{r,s}\in\cc{K}_p} \bb{P}\paren{\abs{\boldsymbol{E}_{rs}}\geq \zeta_{\pi_0, p, w}} \nonumber\\
&\lesssim& \lim\limits_{p\rightarrow\infty} \frac{\abs{\cc{K}'_p}}{p^2}\asymp\lim\limits_{p\rightarrow\infty} \frac{d_{\max}}{p}=0.
\end{eqnarray}
Combining Eq. \eqref{Eq2Thm1} and \eqref{Eq3Thm1} completes the proof of the identity \eqref{Eq1Thm1}. We end the proof by verifying the third condition in Theorem \ref{GalambosThm}. Particularly, our objective is to show that 
\begin{equation}\label{Eq4Thm1}
\lim\limits_{p,w\rightarrow\infty}\brac{\sum_{\paren{r,s}\leftrightarrow\paren{u,v}} \bb{P}\Bigparen{\abs{\boldsymbol{E}_{rs}} \wedge \abs{\boldsymbol{E}_{uv}} \geq \zeta_{\pi_0,p,w} } } = 0. 
\end{equation}
In Eq. \eqref{Eq4Thm1}, $\paren{r,s}\leftrightarrow\paren{u,v}$ refers to the existence of an edge between $\paren{r,s}$ and $\paren{u,v}$ in $\bb{G} = \paren{\cc{K}_p, \cc{E}_q}$. Namely, 
the covariance between $\boldsymbol{E}_{rs}$ and $\boldsymbol{E}_{uv}$ is non-zero and its absolute value is strictly less than one, if $\paren{r,s}\leftrightarrow\paren{u,v}$. In the asymptotic setting of $p, w\rightarrow\infty$, we get
\begin{eqnarray}\label{Eq5Thm1}
\bb{P}\Bigparen{\abs{\boldsymbol{E}_{rs}} \wedge \abs{\boldsymbol{E}_{uv}} \geq \zeta_{\pi_0,p,w} } &=& \bb{P}\Bigparen{\abs{\boldsymbol{E}_{rs}} \geq \zeta_{\pi_0,p,w} }\bb{P}\Bigparen{\abs{\boldsymbol{E}_{rs}} \geq \zeta_{\pi_0,p,w} \Bigl\lvert \abs{\boldsymbol{E}_{uv}} \geq \zeta_{\pi_0,p,w}}\nonumber\\
&=&o\set{\bb{P}\Bigparen{\abs{\boldsymbol{E}_{rs}} \geq \zeta_{\pi_0,p,w} }} = o\paren{\frac{1}{p^2}}
\end{eqnarray}
Define the function $d\paren{\cdot, \cdot}:\cc{K}_p\times \cc{K}_p \mapsto \set{1,2,3,4}$ by
\begin{align*}
d\Bigbrac{\paren{r,s}, \paren{u,v}} \coloneqq\; &\mbox{The size of the maximal set } \cc{H}\subseteq\set{\boldsymbol{X}_r, \boldsymbol{X}_s, \boldsymbol{X}_u, \boldsymbol{X}_v},\\ &\mbox{ whose elements are all independent.}
\end{align*}
If $\paren{r,s}\leftrightarrow\paren{u,v}$, then $d\Bigbrac{\paren{r,s}, \paren{u,v}} \ne 4$. Thus, $\cc{E}_q$ is partitioned in the following way. 
\begin{eqnarray*}
\cc{E}_q &=& \cc{E}'_q \cup \cc{E}''_q \\
&=& \set{\paren{r,s}\leftrightarrow\paren{u,v}:\; d\Bigbrac{\paren{r,s}, \paren{u,v}}\leq 2} \bigcup \set{\paren{r,s}\leftrightarrow\paren{u,v}:\; d\Bigbrac{\paren{r,s}, \paren{u,v}}= 3}.
\end{eqnarray*}
$d\Bigbrac{\paren{r,s}, \paren{u,v}} \leq 2$ means that at least two edges connect the nodes in $\set{\boldsymbol{X}_r, \boldsymbol{X}_s, \boldsymbol{X}_u, \boldsymbol{X}_v}$. Hence, according to Assumption \ref{AssuCLT}, $\abs{\cc{E}'_q} \lesssim p^2d^2_{\max}$. Using Eq. \eqref{Eq5Thm1} leads to
\begin{equation*}
\lim\limits_{p,w\rightarrow\infty}\brac{\sum_{\cc{E}'_q} \bb{P}\Bigparen{\abs{\boldsymbol{E}_{rs}} \wedge \abs{\boldsymbol{E}_{uv}} \geq \zeta_{\pi_0,p,w} } } \lesssim o\paren{\frac{p^2d^2_{\max}}{p^2}} = 0. 
\end{equation*}
Therefore, we only need to focus on $\cc{E}''_q$. Observe that $\abs{\cc{E}''_q} \lesssim p^3d_{\max}$. Thus, for proving the condition in Eq. \eqref{Eq4Thm1}, it suffices to show that
\begin{clawithinpf}\label{Claim2Thm1}
There exists some $\kappa\in\paren{0,1}$ such that
\begin{equation*}
\bb{P}\Bigparen{\abs{\boldsymbol{E}_{rs}} \wedge \abs{\boldsymbol{E}_{uv}} \geq \zeta_{\pi_0,p,w} } = o\paren{p^{-\paren{3+\kappa}}},\quad\forall\;\Bigparen{\paren{r,s}, \paren{u,v}}\in \cc{E}''_q.
\end{equation*}
\end{clawithinpf}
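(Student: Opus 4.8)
\medskip
\noindent\textbf{Proof strategy.} The plan is to first show that membership in $\cc{E}''_q$ is a rigid structural condition forcing $\cov\paren{\boldsymbol{E}_{rs},\boldsymbol{E}_{uv}}=0$, and then to reduce the joint tail to a one-dimensional Chernoff estimate for $\varepsilon_1\boldsymbol{E}_{rs}+\varepsilon_2\boldsymbol{E}_{uv}$.

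First I would pin down the structure of a pair $\paren{\paren{r,s},\paren{u,v}}\in\cc{E}''_q$. Since $d\Bigbrac{\paren{r,s},\paren{u,v}}=3$, three of the four variables $\boldsymbol{X}_r,\boldsymbol{X}_s,\boldsymbol{X}_u,\boldsymbol{X}_v$ are mutually independent, so every edge of the dependence sub-graph on $\set{\boldsymbol{X}_r,\boldsymbol{X}_s,\boldsymbol{X}_u,\boldsymbol{X}_v}$ is incident to one common vertex, i.e.\ that sub-graph is a nonempty star; concretely, either the pairs share exactly one index with all three involved coordinates uncorrelated, or the four indices are distinct and the star has at least one edge crossing between $\set{r,s}$ and $\set{u,v}$. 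In the shared-index case, say the pairs are $\paren{r,s}$ and $\paren{r,v}$, Isserlis' identity gives $\cov\paren{\boldsymbol{E}_{rs},\boldsymbol{E}_{rv}}\propto\boldsymbol{\Omega}_{rr}\boldsymbol{\Omega}_{sv}+\boldsymbol{\Omega}_{rv}\boldsymbol{\Omega}_{rs}=0$; in the four-distinct case, $\cov\paren{\boldsymbol{E}_{rs},\boldsymbol{E}_{uv}}\propto\boldsymbol{\Omega}_{ru}\boldsymbol{\Omega}_{sv}+\boldsymbol{\Omega}_{rv}\boldsymbol{\Omega}_{su}$, and neither product can be nonzero because each requires two edges — $\set{r,u}$ together with $\set{s,v}$, or $\set{r,v}$ together with $\set{s,u}$ — that are vertex-disjoint and hence cannot both be incident to the centre of the star. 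Thus every pair in $\cc{E}''_q$ consists of two \emph{uncorrelated} (though dependent) entries of $\boldsymbol{E}$. Assumption~\ref{AssuCLT} enters here only through $\abs{\boldsymbol{\Omega}_{uv}}\leq r_{\max}<1$.

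Given this, I would use the inclusion
\begin{equation*}
\set{\abs{\boldsymbol{E}_{rs}}\geq\zeta_{\pi_0,p,w}}\cap\set{\abs{\boldsymbol{E}_{uv}}\geq\zeta_{\pi_0,p,w}}\subseteq\bigcup_{\varepsilon_1,\varepsilon_2\in\set{-1,1}}\set{\varepsilon_1\boldsymbol{E}_{rs}+\varepsilon_2\boldsymbol{E}_{uv}\geq 2\zeta_{\pi_0,p,w}}
\end{equation*}
and bound each of the four terms. For fixed signs, $\varepsilon_1\boldsymbol{E}_{rs}+\varepsilon_2\boldsymbol{E}_{uv}=w^{-1/2}\sum_{l=1}^{w}\eta_l$ with $\set{\eta_l}$ i.i.d.; each $\eta_l$ is a fixed linear combination of products of jointly Gaussian unit-variance coordinates whose correlations lie in $\brac{-r_{\max},r_{\max}}$, hence is mean-zero, sub-exponential with parameters depending only on $r_{\max}$, and has variance $\varepsilon_1^2+\varepsilon_2^2+2\varepsilon_1\varepsilon_2\cov\paren{\boldsymbol{E}_{rs},\boldsymbol{E}_{uv}}=2$ by the previous paragraph. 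A Bernstein-type bound then gives, for $0<\lambda\leq c_0\sqrt w$,
\begin{equation*}
\bb{E}\,e^{\lambda\paren{\varepsilon_1\boldsymbol{E}_{rs}+\varepsilon_2\boldsymbol{E}_{uv}}}=\Bigparen{\bb{E}\,e^{\lambda\eta_1/\sqrt w}}^{w}\leq\exp\Bigparen{\lambda^2\paren{1+C\lambda/\sqrt w}},
\end{equation*}
and taking $\lambda=\zeta_{\pi_0,p,w}$ — which is admissible since $\zeta_{\pi_0,p,w}^2\sim 4\log p=o\paren{w}$ by Eq.~\eqref{Eq7Thm1} and condition (b) — yields $\bb{P}\paren{\varepsilon_1\boldsymbol{E}_{rs}+\varepsilon_2\boldsymbol{E}_{uv}\geq 2\zeta_{\pi_0,p,w}}\leq\exp\paren{-\zeta_{\pi_0,p,w}^2\paren{1-o\paren{1}}}=p^{-4+o\paren{1}}$, uniformly over $\cc{E}''_q$. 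Summing the four sign patterns, $\bb{P}\Bigparen{\abs{\boldsymbol{E}_{rs}}\wedge\abs{\boldsymbol{E}_{uv}}\geq\zeta_{\pi_0,p,w}}\leq 4p^{-4+o\paren{1}}=o\paren{p^{-\paren{3+\kappa}}}$ for, say, $\kappa=\tfrac12$, which is Claim~\ref{Claim2Thm1}. I would also note in passing that in the shared-index case $\boldsymbol{E}_{rs}+\boldsymbol{E}_{rv}\eqd\sqrt2\,\vartheta_w$, so the bound there follows verbatim from Corollary~\ref{GaussInnerProdUppBnd}.

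The main obstacle is the structural step — showing that $d=3$ forces a star dependence pattern and hence $\cov\paren{\boldsymbol{E}_{rs},\boldsymbol{E}_{uv}}=0$. Once that combinatorial fact is in hand, the analytic part is an elementary Chernoff estimate whose only subtlety is keeping the correction $C\zeta_{\pi_0,p,w}/\sqrt w$ negligible, which is guaranteed by $\log^3 p=o\paren{w}$ (indeed $\log p=o\paren{w}$ already suffices for this claim).
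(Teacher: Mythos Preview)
Your proposal is correct and follows essentially the same route as the paper: you first argue that $d\Bigbrac{\paren{r,s},\paren{u,v}}=3$ forces $\cov\paren{\boldsymbol{E}_{rs},\boldsymbol{E}_{uv}}=0$, then split $\set{\abs{\boldsymbol{E}_{rs}}\wedge\abs{\boldsymbol{E}_{uv}}\geq\zeta_{\pi_0,p,w}}$ into four sign events and bound each by a Chernoff-type estimate for the standardized sum, exactly as the paper does using Lemma~\ref{ChernoffBound}. Your combinatorial ``star'' argument is a slightly more explicit case analysis than the paper's one-line WLOG reduction, and your Bernstein bound plays the role of the paper's Lemma~\ref{ChernoffBound}, but the substance is the same.
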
 
Pick any $\Bigparen{\paren{r,s}, \paren{u,v}}\in \cc{E}''_q$. Recall that $\boldsymbol{E}_{rs}$ and $\boldsymbol{E}_{uv}$ are mean-zero and unit-variance random variables, defined by
\begin{equation*}
\boldsymbol{E}_{rs} = \frac{\sum_{l=1}^{w} \Bigbrac{\paren{\boldsymbol{Y}_l}_r\paren{\boldsymbol{Y}_l}_s-\boldsymbol{\Omega}_{rs}} }{\sqrt{w\paren{1+\boldsymbol{\Omega}^2_{rs}}}},\quad\mbox{and}\quad \boldsymbol{E}_{uv} = \frac{\sum_{l=1}^{w} \Bigbrac{\paren{\boldsymbol{Y}_l}_u\paren{\boldsymbol{Y}_l}_v-\boldsymbol{\Omega}_{uv}} }{\sqrt{w\paren{1+\boldsymbol{\Omega}^2_{uv}}}}.
\end{equation*}
We first show that $\boldsymbol{E}_{rs}$ and $\boldsymbol{E}_{uv}$ are uncorrelated. Since $d\Bigbrac{\paren{r,s}, \paren{u,v}} = 3$, without loss of generality, we suppose that $\set{\paren{\boldsymbol{Y}_l}_r, \paren{\boldsymbol{Y}_l}_s, \paren{\boldsymbol{Y}_l}_u}^w_{l=1}$ are independent, i.e., $\boldsymbol{\Omega}_{rs}=\boldsymbol{\Omega}_{ru}=\boldsymbol{\Omega}_{su}=0$. Applying Isserlis' Theorem implies that
\begin{equation*}
\cov\paren{\boldsymbol{E}_{rs}, \boldsymbol{E}_{uv}} = \frac{\sum_{l=1}^{w}\cov\paren{\paren{\boldsymbol{Y}_l}_r\paren{\boldsymbol{Y}_l}_s, \paren{\boldsymbol{Y}_l}_u\paren{\boldsymbol{Y}_l}_v}}{w\sqrt{\paren{1+\boldsymbol{\Omega}^2_{rs}}\paren{1+\boldsymbol{\Omega}^2_{uv}}}} = \frac{\boldsymbol{\Omega}_{ru}\boldsymbol{\Omega}_{sv}+\boldsymbol{\Omega}_{rv}\boldsymbol{\Omega}_{su}}{\sqrt{\paren{1+\boldsymbol{\Omega}^2_{rs}}\paren{1+\boldsymbol{\Omega}^2_{uv}}}},
\end{equation*}
which verifies the desired result. We are now ready to prove Claim \ref{Claim2Thm1}. Observe that,
\begin{eqnarray}\label{Eq6Thm1}
\bb{P}\Bigparen{\abs{\boldsymbol{E}_{rs}} \wedge \abs{\boldsymbol{E}_{uv}} \geq \zeta_{\pi_0,p,w} } &=& \bb{P}\Bigparen{\boldsymbol{E}_{rs} \wedge \boldsymbol{E}_{uv} \geq \zeta_{\pi_0,p,w}}+\bb{P}\Bigparen{\boldsymbol{E}_{rs} \wedge -\boldsymbol{E}_{uv} \geq \zeta_{\pi_0,p,w}}\nonumber\\
&=&\bb{P}\Bigparen{-\boldsymbol{E}_{rs} \wedge \boldsymbol{E}_{uv} \geq \zeta_{\pi_0,p,w}}+\bb{P}\Bigparen{-\boldsymbol{E}_{rs} \wedge -\boldsymbol{E}_{uv} \geq \zeta_{\pi_0,p,w}}.
\end{eqnarray}
For avoiding repetition, we only focus on the first term in the right hand side of Eq. \eqref{Eq6Thm1}. The other terms can be handled in an analogous way. The asymptotic identity \eqref{Eq7Thm1} ensures the existence of some $\kappa\in\paren{0,1}$ so that $\zeta^2_{\pi_0,p,w} \geq \paren{3+\kappa}\log p$ (when both $p,w\rightarrow\infty$). Due to the absence of correlation between $\boldsymbol{E}_{rs}$ and $\boldsymbol{E}_{uv}$, $\paren{\boldsymbol{E}_{rs}+ \boldsymbol{E}_{uv}}/\sqrt{2}$ is a zero-mean and unit-variance quadratic form of Gaussian random variables. Hence, an application of Lemma \ref{ChernoffBound} yields,
\begin{equation*}
\bb{P}\Bigparen{\boldsymbol{E}_{rs} \wedge \boldsymbol{E}_{uv} \geq \zeta_{\pi_0,p,w}} \leq \bb{P}\Bigparen{\frac{\boldsymbol{E}_{rs}+\boldsymbol{E}_{uv}}{\sqrt{2}} \geq \sqrt{2}\zeta_{\pi_0,p,w}} \lesssim \exp\paren{-\zeta^2_{\pi_0,p,w}}\leq p^{-\paren{3+\kappa}},
\end{equation*}
which ends the proof of Claim \ref{Claim2Thm1}. 
\end{proof}

\begin{proof}[Theorem \ref{Thm2}]
$T_t$ correctly identifies a change-point at $\paren{t+w}$, if $\LpNorm{\boldsymbol{E}_{t,w}}{\infty} > \zeta_{\pi_0, p, w}$. The goal is to introduce a sufficient condition on $\TimeDepMatrix{\Delta}{t}$ for this criterion to hold with high probability. Notice that,
\begin{equation*}
\bb{E}\paren{\boldsymbol{E}_{t,w} \mid \bb{H}_{1,t}} = \sqrt{w}\TimeDepMatrix{\Delta}{t}.
\end{equation*}
The triangle inequality implies that under the alternative hypothesis $\bb{H}_{1,t}$, 
\begin{equation*}
\LpNorm{\boldsymbol{E}_{t,w}}{\infty} - \zeta_{\pi_0, p, w} \geq \sqrt{w}\LpNorm{\TimeDepMatrix{\Delta}{t}}{\infty} - \zeta_{\pi_0, p, w} - \LpNorm{\boldsymbol{E}_{t,w}-\bb{E}\paren{\boldsymbol{E}_{t,w} \mid \bb{H}_{1,t}}}{\infty}.
\end{equation*}
\begin{clawithinpf}\label{Claim1Thm2} 
For any $\xi > 0$, there exists a bounded scalar $C_{\xi}$ such that
\begin{equation*}
\bb{P}\paren{\LpNorm{\boldsymbol{E}_{t,w}-\bb{E}\paren{\boldsymbol{E}_{t,w} \mid \bb{H}_{1,t}}}{\infty} \geq C_{\xi}\frac{d^2_{\max}}{\alpha_{\min}}\sqrt{\log p}\; \Bigl\lvert \;\bb{H}_{1,t}} \leq p^{-\xi}
\end{equation*}
\end{clawithinpf}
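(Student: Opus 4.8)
The plan is to prove the bound by reducing the claim to a coordinatewise application of Bernstein's inequality followed by a union bound over $\cc{K}_p$. Under $\bb{H}_{1,t}$ the transformed vectors $\boldsymbol{Y}_{t+1},\ldots,\boldsymbol{Y}_{t+w}$ are i.i.d.\ centered Gaussian with covariance $\TimeDepMatrix{\Omega}{t}\TimeDepMatrix{\Sigma}{t+1}\TimeDepMatrix{\Omega}{t}$, so subtracting the conditional mean from \eqref{Sbartw} gives
\[
\boldsymbol{E}_{t,w}-\bb{E}\paren{\boldsymbol{E}_{t,w}\mid\bb{H}_{1,t}}
=\frac{1}{\sqrt{w}}\sum_{r=1}^{w}\paren{\boldsymbol{Y}_{t+r}\boldsymbol{Y}_{t+r}^{\top}-\bb{E}\,\boldsymbol{Y}_{t+r}\boldsymbol{Y}_{t+r}^{\top}}\circ\TimeDepMatrix{\Psi}{t},
\]
where $\TimeDepMatrix{\Psi}{t}$ is the entrywise standardization matrix with entries $\paren{\TimeDepMatrix{\Omega}{t}_{uu}\TimeDepMatrix{\Omega}{t}_{vv}+\paren{\TimeDepMatrix{\Omega}{t}_{uv}}^{2}}^{-1/2}$. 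Since $\TimeDepMatrix{\Omega}{t}$ has unit diagonal and $\abs{\TimeDepMatrix{\Omega}{t}_{uv}}\le r_{\max}<1$ by Assumption~\ref{AssuCLT}, every entry of $\TimeDepMatrix{\Psi}{t}$ lies in $\brac{1/\sqrt{2},\,1}$, so it suffices to bound $\max_{\paren{u,v}\in\cc{K}_p}\abs{W^{(uv)}_{w}}$, where $W^{(uv)}_{w}\coloneqq w^{-1/2}\sum_{r=1}^{w}\set{\paren{\boldsymbol{Y}_{t+r}}_{u}\paren{\boldsymbol{Y}_{t+r}}_{v}-\bb{E}\brac{\paren{\boldsymbol{Y}_{t+r}}_{u}\paren{\boldsymbol{Y}_{t+r}}_{v}}}$.

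Next I would note that, for each fixed $\paren{u,v}$, the $w$ summands of $W^{(uv)}_{w}$ are i.i.d., mean zero, and — being centered products of jointly Gaussian coordinates — sub-exponential; by Isserlis' Theorem their common variance, and likewise their sub-exponential norm, are at most of order $\max_{1\le u\le p}\var\brac{\paren{\boldsymbol{Y}_{t+1}}_{u}}=\max_{u}\paren{\TimeDepMatrix{\Omega}{t}\TimeDepMatrix{\Sigma}{t+1}\TimeDepMatrix{\Omega}{t}}_{uu}$. Combining the row-sparsity $\LpNorm{\TimeDepMatrix{\Omega}{t}\boldsymbol{e}_{u}}{0}\le d_{\max}$, the elementwise bound $\LpNorm{\TimeDepMatrix{\Omega}{t}\boldsymbol{e}_{u}}{\infty}\le 1$, and the spectral control that keeps $\OpNorm{\TimeDepMatrix{\Omega}{t}\TimeDepMatrix{\Sigma}{t+1}}{\infty}{\infty}$ bounded (as noted after Theorem~\ref{Thm2}), this quantity is $\lesssim d_{\max}^{2}/\alpha_{\min}=:K$, a crude but sufficient bound. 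Bernstein's inequality for normalized sums of i.i.d.\ sub-exponential variables then yields, for every $\tau>0$,
\[
\bb{P}\paren{\abs{W^{(uv)}_{w}}\ge\tau}\le 2\exp\paren{-c\min\set{\tau^{2}/K^{2},\;\tau\sqrt{w}/K}}.
\]
Choosing $\tau=C_{\xi}K\sqrt{\log p}$ makes the two arguments of the minimum of order $C_{\xi}^{2}\log p$ and $C_{\xi}\sqrt{w\log p}$; Condition~(b) ($w^{-1}\log p\to0$) forces $\sqrt{w\log p}\gg\log p$, so for large $p$ the first term is binding and each coordinate exceeds $\tau$ with probability at most $2p^{-c'C_{\xi}^{2}}$. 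A union bound over $\abs{\cc{K}_p}={p+1\choose2}\le p^{2}$ coordinates, together with the choice of $C_{\xi}$ so that $c'C_{\xi}^{2}\ge\xi+2$, gives the claimed $p^{-\xi}$.

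The main obstacle is the variance / sub-exponential estimate in the second step: controlling the diagonal of $\TimeDepMatrix{\Omega}{t}\TimeDepMatrix{\Sigma}{t+1}\TimeDepMatrix{\Omega}{t}$ purely in terms of $d_{\max}$ and $\alpha_{\min}$ — this is where the row-sparsity of $\TimeDepMatrix{\Omega}{t}$ and the spectral lower bound in Assumption~\ref{AssuCLT} genuinely enter — and then verifying that the sub-Gaussian branch of Bernstein's bound is the one in force at the scale $\sqrt{\log p}$, which is precisely what Condition~(b) is designed to guarantee. The remaining steps (the Isserlis computation of the variance, and absorbing the factor ${p+1\choose2}$ into the exponent by enlarging $C_{\xi}$) are routine.
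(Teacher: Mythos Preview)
Your argument is correct, but it follows a different route from the paper's. The paper does not apply Bernstein coordinatewise to the products of the coordinates of $\boldsymbol{Y}_{t+r}=\TimeDepMatrix{\Omega}{t}\boldsymbol{X}_{t+r}$. Instead it factors $\TimeDepMatrix{\Omega}{t}$ out of the centered sum via the $\ell_\infty\!\to\!\ell_\infty$ operator norm,
\[
\LpNorm{\boldsymbol{E}_{t,w}-\bb{E}\boldsymbol{E}_{t,w}}{\infty}
\le \OpNorm{\TimeDepMatrix{\Omega}{t}}{\infty}{\infty}^{2}\,
\LpNorm{\frac{1}{\sqrt{w}}\sum_{r=1}^{w}\paren{\boldsymbol{X}_{t+r}\boldsymbol{X}_{t+r}^{\top}-\TimeDepMatrix{\Sigma}{t+1}}}{\infty},
\]
uses $\OpNorm{\TimeDepMatrix{\Omega}{t}}{\infty}{\infty}\le d_{\max}$ (unit diagonal, row sparsity), bounds $\max_i \TimeDepMatrix{\Sigma}{t+1}_{ii}\le 1/\alpha_{\min}$ via the spectral lower bound, and then invokes the off-the-shelf sample-covariance deviation bound (Lemma~\ref{SampleCovarSupNormUppBnd}) on the \emph{raw} $\boldsymbol{X}$'s. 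This is more modular: the $d_{\max}^{2}$ and $1/\alpha_{\min}$ factors enter at separate, transparent places, and no sub-exponential norm of the $\boldsymbol{Y}$-products needs to be computed. Your route is equally valid and more self-contained; the price is that you must justify the sub-exponential parameter of $(\boldsymbol{Y}_{t+1})_u(\boldsymbol{Y}_{t+1})_v$ yourself, which is where the minor imprecision in your write-up lies (the \emph{variance} of that product is of order the \emph{square} of $\max_u\var\brac{(\boldsymbol{Y}_{t+1})_u}$, not the first power; what is of the first power is the sub-exponential norm, and that is the quantity your Bernstein bound actually uses, so the final scaling $C_\xi\,d_{\max}^2\alpha_{\min}^{-1}\sqrt{\log p}$ comes out correctly).
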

Given Claim \ref{Claim1Thm2}, the following inequality holds with probability at least $1-p^{-\xi}$.
\begin{equation}
\LpNorm{\boldsymbol{E}_{t,w}}{\infty} - \zeta_{\pi_0, p, w} \geq \sqrt{w}\LpNorm{\TimeDepMatrix{\Delta}{t}}{\infty} - \zeta_{\pi_0, p, w} - C_{\xi}\frac{d^2_{\max}}{\alpha_{\min}}\sqrt{\log p}.
\end{equation}
Thus, $T_t = 1$ with probability at least $1-p^{-\xi}$, if
\begin{equation}\label{Eq1Thm2}
\sqrt{w}\LpNorm{\TimeDepMatrix{\Delta}{t}}{\infty} - \zeta_{\pi_0, p, w} - C_{\xi}\frac{d^2_{\max}}{\alpha_{\min}}\sqrt{\log p} > 0\;\; \Longleftrightarrow\;\;  \LpNorm{\TimeDepMatrix{\Delta}{t}}{\infty} > \frac{\zeta_{\pi_0, p, w}}{\sqrt{w}} + C_{\xi}\frac{d^2_{\max}}{\alpha_{\min}}\sqrt{\frac{\log p}{w}}.
\end{equation}
The condition on $\LpNorm{\TimeDepMatrix{\Delta}{t}}{\infty}$ in Eq. \eqref{Eq1Thm2} is same as Eq. \eqref{SuffDetectCond} in the statement of Theorem \ref{Thm2}. So, we only need to prove Claim \ref{Claim1Thm2}. Before proceeding further, recall that  $\boldsymbol{X}_{t+r},\;r=1,\ldots,w$ are i.i.d. zero-mean Gaussian vectors with covariance matrix $\TimeDepMatrix{\Sigma}{t+1}$ (as opposed to $\boldsymbol{X}_t$ whose precision matrix is given by $\TimeDepMatrix{\Omega}{t} = \paren{\TimeDepMatrix{\Sigma}{t}}^{-1}$). Observe that,
\begin{eqnarray*}
\boldsymbol{E}_{t,w}-\bb{E}\paren{\boldsymbol{E}_{t,w}} &=& \sum_{r=1}^{w}\frac{\paren{ \TimeDepMatrix{\Omega}{t}\boldsymbol{X}_{t+r}\boldsymbol{X}_{t+r}^\top\TimeDepMatrix{\Omega}{t}-\TimeDepMatrix{\Omega}{t}\TimeDepMatrix{\Sigma}{t+1}\TimeDepMatrix{\Omega}{t}}}{\sqrt{w}} \circ \brac{ \paren{\TimeDepMatrix{\Omega}{t}_{uu}\TimeDepMatrix{\Omega}{t}_{vv} + \paren{\TimeDepMatrix{\Omega}{t}_{uv}}^2}^{-1/2} }^p_{u,v=1}\\
&=& \sum_{r=1}^{w}\frac{ \paren{\TimeDepMatrix{\Omega}{t}\boldsymbol{X}_{t+r}\boldsymbol{X}_{t+r}^\top\TimeDepMatrix{\Omega}{t}-\TimeDepMatrix{\Omega}{t}\TimeDepMatrix{\Sigma}{t+1}\TimeDepMatrix{\Omega}{t}}}{\sqrt{w}} \circ \brac{ \paren{1 + \paren{\TimeDepMatrix{\Omega}{t}_{uv}}^2}^{-1/2} }^p_{u,v=1}.
\end{eqnarray*}
Thus, 
\begin{eqnarray*}
\LpNorm{\boldsymbol{E}_{t,w}-\bb{E}\paren{\boldsymbol{E}_{t,w}}}{\infty}&\leq& \LpNorm{\TimeDepMatrix{\Omega}{t}\sum_{r=1}^{w}\paren{\frac{\boldsymbol{X}_{t+r}\boldsymbol{X}_{t+r}^\top-\TimeDepMatrix{\Sigma}{t+1}}{\sqrt{w}}}\TimeDepMatrix{\Omega}{t}}{\infty}\LpNorm{\brac{\paren{1 + \paren{\TimeDepMatrix{\Omega}{t}_{uv}}^2}^{-1/2} }^p_{u,v=1}}{\infty}\\
&\leq& \OpNorm{\TimeDepMatrix{\Omega}{t}}{\infty}{\infty}^2\LpNorm{\sum_{r=1}^{w}\paren{\frac{\boldsymbol{X}_{t+r}\boldsymbol{X}_{t+r}^\top-\TimeDepMatrix{\Sigma}{t+1}}{\sqrt{w}}}}{\infty}\\
&\leq& d^2_{\max}\LpNorm{\sum_{r=1}^{w}\paren{\frac{\boldsymbol{X}_{t+r}\boldsymbol{X}_{t+r}^\top-\TimeDepMatrix{\Sigma}{t+1}}{\sqrt{w}}}}{\infty}.
\end{eqnarray*}
The last inequality is implied from Assumption \ref{AssuCLT} (row-sparsity of $\TimeDepMatrix{\Omega}{t}$) and the fact that all diagonal entries of $\TimeDepMatrix{\Omega}{t}$ are equal to one. Thus, Claim \ref{Claim1Thm2} holds, if we can prove that 
\begin{equation}\label{Eq2Thm2}
\bb{P}\paren{\LpNorm{\frac{1}{w}\sum_{r=1}^{w}\boldsymbol{X}_{t+r}\boldsymbol{X}_{t+r}^\top-\TimeDepMatrix{\Sigma}{t+1}}{\infty} \geq \frac{C_{\xi}}{\alpha_{\min}}\sqrt{\frac{\log p}{w}}}. \leq p^{-\xi},
\end{equation}
Observe that
\begin{equation}\label{Eq3Thm2}
\max_{1\leq i \leq p} \abs{\TimeDepMatrix{\Sigma}{t+1}_{ii}} \leq \OpNorm{\TimeDepMatrix{\Sigma}{t+1}}{2}{2} = \frac{1}{\lambda_{\min}\paren{\TimeDepMatrix{\Omega}{t+1}}} \leq \frac{1}{\alpha_{\min}}.
\end{equation}
Thus, the inequality \eqref{Eq2Thm2} is directly followed from Lemma \ref{SampleCovarSupNormUppBnd}.
\end{proof}

Before beginning the proof of Theorem \ref{Thm3}, recall that 
\begin{align*}
&\boldsymbol{E}_{t, w} = \sum_{r=1}^{w}\frac{ \paren{\TimeDepMatrix{\Omega}{t}\boldsymbol{X}_{t+r}\boldsymbol{X}_{t+r}^\top\TimeDepMatrix{\Omega}{t}-\TimeDepMatrix{\Omega}{t}}}{\sqrt{w}} \circ \brac{ \paren{\TimeDepMatrix{\Omega}{t}_{uu}\TimeDepMatrix{\Omega}{t}_{vv} + \paren{\TimeDepMatrix{\Omega}{t}_{uv}}^2}^{-1/2} }^p_{u,v=1},\\
&\boldsymbol{\hat{E}}_{t, w} = \sum_{r=1}^{w}\frac{ \paren{\TimeDepMatrix{\hat{\Omega}}{t}\boldsymbol{X}_{t+r}\boldsymbol{X}_{t+r}^\top\TimeDepMatrix{\hat{\Omega}}{t}-\TimeDepMatrix{\hat{\Omega}}{t}}}{\sqrt{w}} \circ \brac{ \paren{\TimeDepMatrix{\hat{\Omega}}{t}_{uu}\TimeDepMatrix{\hat{\Omega}}{t}_{vv} + \paren{\TimeDepMatrix{\hat{\Omega}}{t}_{uv}}^2}^{-1/2} }^p_{u,v=1}.
\end{align*} 
Here $\TimeDepMatrix{\hat{\Omega}}{t}$ denotes the positive semi-definite CLIME precision matrix estimate \cite{cai2011constrained} of $\TimeDepMatrix{\Omega}{t}$ which is obtained from $N$ samples collected prior to $t$ ($\boldsymbol{X}_{t-i},\;i=1,\ldots,N$). We also define $T_t$ and $\hat{T}_t$ by 
\begin{equation*}
T_t = \bbM{1}\paren{\LpNorm{\boldsymbol{E}_{t, w}}{\infty} \geq \zeta_{\pi_0, p, w}},\quad \hat{T}_t = \bbM{1}\paren{\LpNorm{\boldsymbol{\hat{E}}_{t, w}}{\infty} \geq \zeta_{\pi_0, p, w}}.
\end{equation*}
In order to have a compact formulation, set
\begin{equation*}
\TimeDepMatrix{\Psi}{t} \coloneqq \brac{ \paren{\TimeDepMatrix{\Omega}{t}_{uu}\TimeDepMatrix{\Omega}{t}_{vv} + \paren{\TimeDepMatrix{\Omega}{t}_{uv}}^2}^{-1/2} }^p_{u,v=1},\quad\mbox{and}\quad \TimeDepMatrix{\hat{\Psi}}{t}\coloneqq \brac{ \paren{\TimeDepMatrix{\hat{\Omega}}{t}_{uu}\TimeDepMatrix{\hat{\Omega}}{t}_{vv} + \paren{\TimeDepMatrix{\hat{\Omega}}{t}_{uv}}^2}^{-1/2} }^p_{u,v=1}.
\end{equation*}

\begin{proof}[Proof of Theorem \ref{Thm3}]
The goal is to show that $\bb{P}_{\FA}\paren{\hat{T}_t} = \pi_0+o\paren{1}$. So we need a sufficient condition on $N$ for which,
\begin{equation*}
\abs{\LpNorm{\hat{\boldsymbol{E}}_{t,w}}{\infty}-\LpNorm{\boldsymbol{E}_{t,w}}{\infty}} \leq \LpNorm{\hat{\boldsymbol{E}}_{t,w}-\boldsymbol{E}_{t,w}}{\infty} = o_{\bb{P}}\paren{1}.
\end{equation*}
The triangle inequality implies that
\begin{eqnarray*}
\LpNorm{\hat{\boldsymbol{E}}_{t,w}-\boldsymbol{E}_{t,w}}{\infty} &\leq& \LpNorm{\TimeDepMatrix{\hat{\Psi}}{t}-\TimeDepMatrix{\Psi}{t}}{\infty}\LpNorm{\sum_{r=1}^{w}\frac{ \paren{\TimeDepMatrix{\Omega}{t}\boldsymbol{X}_{t+r}\boldsymbol{X}_{t+r}^\top\TimeDepMatrix{\Omega}{t}-\TimeDepMatrix{\Omega}{t}}}{\sqrt{w}}}{\infty}\\
&+&\LpNorm{\TimeDepMatrix{\hat{\Psi}}{t}}{\infty}\LpNorm{\sum_{r=1}^{w}\frac{ \paren{\TimeDepMatrix{\hat{\Omega}}{t}\boldsymbol{X}_{t+r}\boldsymbol{X}_{t+r}^\top\TimeDepMatrix{\hat{\Omega}}{t}-\TimeDepMatrix{\Omega}{t}\boldsymbol{X}_{t+r}\boldsymbol{X}_{t+r}^\top\TimeDepMatrix{\Omega}{t}-\TimeDepMatrix{\hat{\Omega}}{t}+\TimeDepMatrix{\Omega}{t}}}{\sqrt{w}}}{\infty}.
\end{eqnarray*}
Next, we present a slightly weaker (but simpler) upper bound on $\LpNorm{\hat{\boldsymbol{E}}_{t,w}-\boldsymbol{E}_{t,w}}{\infty}$.
\begin{eqnarray}\label{Eq2Thm3}
\LpNorm{\hat{\boldsymbol{E}}_{t,w}-\boldsymbol{E}_{t,w}}{\infty} &\leq& \LpNorm{\TimeDepMatrix{\hat{\Psi}}{t}-\TimeDepMatrix{\Psi}{t}}{\infty}\OpNorm{\TimeDepMatrix{\Omega}{t}}{\infty}{\infty}^2\LpNorm{\sum_{r=1}^{w}\frac{ \paren{\boldsymbol{X}_{t+r}\boldsymbol{X}_{t+r}^\top-\TimeDepMatrix{\Sigma}{t}}}{\sqrt{w}}}{\infty}\nonumber\\
&+&\LpNorm{\TimeDepMatrix{\hat{\Psi}}{t}}{\infty}\LpNorm{\sum_{r=1}^{w}\frac{ \paren{\TimeDepMatrix{\hat{\Omega}}{t}\boldsymbol{X}_{t+r}\boldsymbol{X}_{t+r}^\top\TimeDepMatrix{\hat{\Omega}}{t}-\TimeDepMatrix{\Omega}{t}\boldsymbol{X}_{t+r}\boldsymbol{X}_{t+r}^\top\TimeDepMatrix{\Omega}{t}-\TimeDepMatrix{\hat{\Omega}}{t}+\TimeDepMatrix{\Omega}{t}}}{\sqrt{w}}}{\infty}.
\end{eqnarray}
Let $\spadesuit$ and $\clubsuit$ respectively denote the two terms in the right hand side of Eq. \eqref{Eq2Thm3}. The asymptotic properties of the matrices $\TimeDepMatrix{\Pi}{t} = \TimeDepMatrix{\hat{\Omega}}{t}-\TimeDepMatrix{\Omega}{t}$ and $\TimeDepMatrix{\hat{\Omega}}{t}$ are needed for controlling $\spadesuit$ and $\clubsuit$ from above. Based on the asymptotic results in \cite{cai2011constrained}, if $\TimeDepMatrix{\Omega}{t}$ satisfies Assumption \ref{AssuCLT}, then as $p\rightarrow\infty$
\begin{enumerate}
\item There is $C < \infty$ (depends on the quantities appeared in Assumption \ref{AssuCLT}) such that
\begin{equation*}
\bb{P}\paren{\LpNorm{\TimeDepMatrix{\Pi}{t}}{\infty}\geq C\sqrt{\frac{\log p}{N}}} \leq \frac{1}{p}
\end{equation*}
\item $\TimeDepMatrix{\hat{\Omega}}{t}$ has bounded condition number with probability $1-\cc{O}\paren{p^{-1}}$.
\end{enumerate}
Using these facts, one can easily show that 
\begin{equation*}
\bb{P}\paren{\LpNorm{\TimeDepMatrix{\hat{\Psi}}{t}-\TimeDepMatrix{\Psi}{t}}{\infty}\geq C'\sqrt{\frac{\log p}{N}}} \leq \frac{1}{p}.
\end{equation*}
for a bounded scalar $C'$. Hence, with probability $1-\cc{O}\paren{p^{-1}}$,
\begin{equation*}
\spadesuit\lesssim \sqrt{\frac{\log p}{N}}\OpNorm{\TimeDepMatrix{\Omega}{t}}{\infty}{\infty}^2\LpNorm{\sum_{r=1}^{w}\frac{ \paren{\boldsymbol{X}_{t+r}\boldsymbol{X}_{t+r}^\top-\TimeDepMatrix{\Sigma}{t}}}{\sqrt{w}}}{\infty}\asymp \sqrt{\frac{\log p}{N}}\LpNorm{\sum_{r=1}^{w}\frac{ \paren{\boldsymbol{X}_{t+r}\boldsymbol{X}_{t+r}^\top-\TimeDepMatrix{\Sigma}{t}}}{\sqrt{w}}}{\infty}.
\end{equation*}
We showed in Eq. \eqref{Eq3Thm2} that any diagonal entry of $\TimeDepMatrix{\Sigma}{t}$ is smaller than $\alpha^{-1}_{\min}$ (remember the role of $\alpha_{\min}$ in Assumption \ref{AssuCLT}). Thus an application of Lemma \ref{SampleCovarSupNormUppBnd} yields
\begin{equation}\label{Eq8Thm3}
\bb{P}\paren{\LpNorm{\sum_{r=1}^{w}\frac{ \paren{\boldsymbol{X}_{t+r}\boldsymbol{X}_{t+r}^\top-\TimeDepMatrix{\Sigma}{t}}}{\sqrt{w}}}{\infty} \geq \frac{C''}{\alpha_{\min}}\sqrt{\log p}}\leq \frac{1}{p}.
\end{equation}
for some $C''$. Thus, the bound on $\spadesuit$ is as simple as $\spadesuit = \cc{O}_{\bb{P}}\paren{N^{-1/2}\log p}$, which leads to
\begin{equation*}
\LpNorm{\hat{\boldsymbol{E}}_{t,w}-\boldsymbol{E}_{t,w}}{\infty} = \cc{O}_{\bb{P}}\paren{N^{-1/2}\log p + \clubsuit}.
\end{equation*}
Since $N$ grows faster than $w\log p$ (and hence $\log^2 p$), it suffices to show that,
\begin{equation}\label{Eq9Thm3}
\clubsuit = \cc{O}_{\bb{P}}\paren{\sqrt{N^{-1}w\log p}} =  o_{\bb{P}}\paren{1}.
\end{equation}
We can ignore $\LpNorm{\TimeDepMatrix{\hat{\Psi}}{t}}{\infty}$ in $\clubsuit$, as it remains bounded with high probability. Moreover, $\clubsuit$ depends on samples prior and after $t$ (both pre- and post-change regimes). Namely, there are two sources of randomness in the formulation of $\clubsuit$. The triangle inequality helps us to alleviate this matter by introducing the following upper bound on $\clubsuit$.
\begin{eqnarray}\label{Eq4Thm3}
\clubsuit &\leq& \LpNorm{\sum_{r=1}^{w}\frac{ \paren{\TimeDepMatrix{\hat{\Omega}}{t}\boldsymbol{X}_{t+r}\boldsymbol{X}_{t+r}^\top\TimeDepMatrix{\hat{\Omega}}{t}-\TimeDepMatrix{\Omega}{t}\boldsymbol{X}_{t+r}\boldsymbol{X}_{t+r}^\top\TimeDepMatrix{\Omega}{t}-\TimeDepMatrix{\hat{\Omega}}{t}\TimeDepMatrix{\Sigma}{t}\TimeDepMatrix{\hat{\Omega}}{t}+\TimeDepMatrix{\Omega}{t}}}{\sqrt{w}}}{\infty}\nonumber\\
&+&\sqrt{w} \LpNorm{\TimeDepMatrix{\hat{\Omega}}{t}-\TimeDepMatrix{\hat{\Omega}}{t}\TimeDepMatrix{\Sigma}{t}\TimeDepMatrix{\hat{\Omega}}{t}}{\infty} 
\end{eqnarray}
Let $\clubsuit_1$ and $\clubsuit_2$ represents the two terms in the right hand side of Eq. \eqref{Eq4Thm3}. Next, we control $\clubsuit_1$ and $\clubsuit_2$ from above. Observe that
\begin{eqnarray}\label{Eq5Thm3}
\frac{\clubsuit_2}{\sqrt{w}} &=&  \LpNorm{\TimeDepMatrix{\hat{\Omega}}{t}-\TimeDepMatrix{\hat{\Omega}}{t}\TimeDepMatrix{\Sigma}{t}\TimeDepMatrix{\hat{\Omega}}{t}}{\infty} \leq \OpNorm{\TimeDepMatrix{\hat{\Omega}}{t}}{\infty}{\infty}\LpNorm{\TimeDepMatrix{\hat{\Omega}}{t}\TimeDepMatrix{\Sigma}{t}-I_p}{\infty} \nonumber\\
&\leq& \OpNorm{\TimeDepMatrix{\hat{\Omega}}{t}}{\infty}{\infty}\OpNorm{\TimeDepMatrix{\hat{\Omega}}{t}\TimeDepMatrix{\Sigma}{t}-I_p}{2}{2}
\leq\OpNorm{\TimeDepMatrix{\hat{\Omega}}{t}}{\infty}{\infty}\OpNorm{\TimeDepMatrix{\hat{\Omega}}{t}\TimeDepMatrix{\Sigma}{t}-I_p}{2}{2} \nonumber\\
&\leq& \frac{1}{\alpha_{\min}}\OpNorm{\TimeDepMatrix{\hat{\Omega}}{t}}{\infty}{\infty}\OpNorm{\TimeDepMatrix{\Pi}{t}}{2}{2}.
\end{eqnarray}
The two aforementioned facts from \cite{cai2011constrained}, guarantee that
\begin{equation}\label{Eq7Thm3}
\OpNorm{\TimeDepMatrix{\hat{\Omega}}{t}}{\infty}{\infty} = \cc{O}_{\bb{P}}\paren{1} \,\quad\mbox{and},\quad \OpNorm{\TimeDepMatrix{\Pi}{t}}{2}{2} = \cc{O}_{\bb{P}}\paren{\sqrt{N^{-1}\log p}}.
\end{equation}
Replacing these inequalities into Eq. \eqref{Eq5Thm3} implies that $\clubsuit_2=\cc{O}_{\bb{P}}\paren{\sqrt{N^{-1}w\log p}}$. Further, by using the triangle inequality, and the basic properties of $\ell_\infty\mapsto\ell_\infty$, one can show that
\begin{eqnarray}\label{Eq6Thm3}
\clubsuit_1 &\leq& \paren{\OpNorm{\TimeDepMatrix{\Omega}{t}}{\infty}{\infty}+\OpNorm{\TimeDepMatrix{\hat{\Omega}}{t}}{\infty}{\infty}}\OpNorm{\TimeDepMatrix{\Pi}{t}}{\infty}{\infty}\LpNorm{\sum_{r=1}^{w}\frac{ \paren{\boldsymbol{X}_{t+r}\boldsymbol{X}_{t+r}^\top-\TimeDepMatrix{\Sigma}{t}}}{\sqrt{w}}}{\infty}\nonumber\\
&\leq&\paren{2\OpNorm{\TimeDepMatrix{\Omega}{t}}{\infty}{\infty}+\OpNorm{\TimeDepMatrix{\Pi}{t}}{\infty}{\infty}}\OpNorm{\TimeDepMatrix{\Pi}{t}}{\infty}{\infty}\LpNorm{\sum_{r=1}^{w}\frac{ \paren{\boldsymbol{X}_{t+r}\boldsymbol{X}_{t+r}^\top-\TimeDepMatrix{\Sigma}{t}}}{\sqrt{w}}}{\infty}.
\end{eqnarray}
The details are omitted due to space constraints and the fact that they are rather straightforward algebraic derivations. Again, the asymptotic properties in Eq. \eqref{Eq7Thm3} ensure the existence of a bounded scalar $\tilde{C}$, for which
\begin{equation*}
\bb{P}\paren{\clubsuit_1 \geq \tilde{C}\sqrt{\frac{\log p}{N}}\LpNorm{\sum_{r=1}^{w}\frac{ \paren{\boldsymbol{X}_{t+r}\boldsymbol{X}_{t+r}^\top-\TimeDepMatrix{\Sigma}{t}}}{\sqrt{w}}}{\infty}} \lesssim \frac{1}{p}.
\end{equation*}
Remember from Eq. \eqref{Eq8Thm3} that $\LpNorm{\sum_{r=1}^{w}\frac{ \paren{\boldsymbol{X}_{t+r}\boldsymbol{X}_{t+r}^\top-\TimeDepMatrix{\Sigma}{t}}}{\sqrt{w}}}{\infty} = \cc{O}_{\bb{P}}\paren{\sqrt{\log p}}$. Furthermore, the two terms in the upper bound on $\clubsuit_1$ in Eq. \eqref{Eq6Thm3} are independent, i.e.,
\begin{equation*}
\paren{2\OpNorm{\TimeDepMatrix{\Omega}{t}}{\infty}{\infty}+\OpNorm{\TimeDepMatrix{\Pi}{t}}{\infty}{\infty}}\OpNorm{\TimeDepMatrix{\Pi}{t}}{\infty}{\infty} \independent \LpNorm{\sum_{r=1}^{w}\frac{ \paren{\boldsymbol{X}_{t+r}\boldsymbol{X}_{t+r}^\top-\TimeDepMatrix{\Sigma}{t}}}{\sqrt{w}}}{\infty}.
\end{equation*}
Thus, $\clubsuit_1 = \cc{O}_{\bb{P}}\paren{\sqrt{\frac{\log p}{N}}\sqrt{\log p}} = \cc{O}_{\bb{P}}\paren{\sqrt{N^{-1}\log^2 p}}$. In summary, 
\begin{equation*}
\clubsuit = \cc{O}_{\bb{P}}\paren{\sqrt{N^{-1}w\log p}+\sqrt{N^{-1}\log^2 p}} = \cc{O}_{\bb{P}}\paren{\sqrt{N^{-1}\paren{w\vee\log p}\log p}}.
\end{equation*}
We conclude the proof by recalling that $w^{-1}\log^3 p\rightarrow 0$. Thus, $\clubsuit = \cc{O}_{\bb{P}}\paren{\sqrt{N^{-1}w\log p}}$, which is same as the desired identity in Eq. \eqref{Eq9Thm3}.
\end{proof}

\begin{appendix}

\section{Auxiliary technical results}\label{AppendixA}

This section includes some auxiliary technical lemmas which are used for proving the main results in Section \ref{Proofs}. For the sake of a clear exposition, we provide a succinct summary of each auxiliary result.
\begin{itemize}
\item Lemma \ref{UppBndmomentsSubGauss} gives an upper bound on the expected value of $\max_{1\leq j\leq n} \abs{X_j}^m$, where $\set{X_j}^n_{j=1}$ are sub-Gaussian random variables. Such result is beneficial for proving Theorem \ref{thm0}.
\item Lemma \ref{ChernoffBound}, which presents a cleaner version of the Hanson-Wright inequality \cite{rudelson2013hanson} for multivariate Gaussian vectors, is needed for establishing Theorem \ref{Thm1}.
\item In Lemma \ref{SampleCovarSupNormUppBnd}, we control $\ell_\infty$ norm of the error in estimating sample covariance matrix of i.i.d. $p$-variate standard Gaussian vectors. This result, which is needed for proving Theorems \ref{Thm2} and \ref{Thm3}, has been appeared in \cite{cai2011constrained}. Due to space limitations, we drop the proof of Lemma \ref{SampleCovarSupNormUppBnd} and refer the interested reader to \cite{cai2011constrained} (p. $605$) for full technical details.
\end{itemize}

\begin{lem}\label{UppBndmomentsSubGauss}
Let $\set{X_i}^n_{i=1}$ be a set of sub-Gaussian random variables such that 
\begin{equation*}
\max_{1\leq j\leq n}\bb{P}\paren{\abs{X_j} \geq t} \leq 2\exp\paren{-\frac{t^2}{2\sigma^2}},\quad\forall\;t\geq 0,
\end{equation*}
for some bounded scalar $\sigma > 0$. For any $m\in\bb{N}$, there exists $C_{m}\in\paren{0,\infty}$ such that
\begin{equation*}
\bb{E}\max_{1\leq j\leq n} \abs{X_j}^m \leq C_{m}\sigma^m\log^{m/2} n.
\end{equation*}
\end{lem}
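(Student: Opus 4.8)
The plan is to estimate the expectation through the layer-cake formula and split the resulting integral at a threshold of order $\sigma\sqrt{\log n}$. Writing $M \coloneqq \max_{1\leq j\leq n}\abs{X_j}$, we have $\bb{E} M^m = \int_0^\infty m t^{m-1}\,\bb{P}\paren{M > t}\,dt$ by Tonelli. Fix any constant $\beta > 2$ (for concreteness $\beta = 3$) and put $t_0 \coloneqq \sigma\sqrt{2\beta\log n}$. On $t\in\brac{0,t_0}$ I would use the trivial bound $\bb{P}\paren{M>t}\leq 1$, whose contribution is $\int_0^{t_0} m t^{m-1}\,dt = t_0^m = \paren{2\beta}^{m/2}\sigma^m\log^{m/2}n$, already of the advertised order.

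For the tail $t>t_0$, the union bound and the sub-Gaussian hypothesis give $\bb{P}\paren{M>t}\leq 2n\exp\paren{-t^2/(2\sigma^2)}$, so the tail contribution equals $2n\sigma^m\int_{s_0}^\infty m s^{m-1}e^{-s^2/2}\,ds$ after substituting $s = t/\sigma$, with $s_0 = \sqrt{2\beta\log n}$. Since $s^{m-1}e^{-s^2/4}$ is non-increasing for $s\geq\sqrt{2(m-1)}$, once $n$ is large enough that $s_0\geq\sqrt{2(m-1)}$ we may replace $s^{m-1}e^{-s^2/2}$ by $C'_m e^{-s^2/4}$ on $\brac{s_0,\infty}$, where $C'_m = \paren{2(m-1)}^{(m-1)/2}e^{-(m-1)/2}$ (and $C'_1 = 1$). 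Then, using $e^{-s^2/4}\leq e^{-s_0 s/4}$ for $s\geq s_0$, we get $\int_{s_0}^\infty e^{-s^2/4}\,ds\leq \tfrac{4}{s_0}e^{-s_0^2/4}$, so the tail contribution is at most $C''_m\, n\,\sigma^m\,s_0^{-1}e^{-s_0^2/4} = C''_m\,\sigma^m\,n^{1-\beta/2}\paren{2\beta\log n}^{-1/2}$, which tends to $0$ (since $\beta > 2$) and is in particular bounded by $\sigma^m\log^{m/2}n$ for all sufficiently large $n$.

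Adding the two contributions yields $\bb{E} M^m \leq \brac{\paren{2\beta}^{m/2}+o\paren{1}}\sigma^m\log^{m/2}n$, so the claim holds with, say, $C_m = \paren{2\beta}^{m/2}+1$ for all large $n$; the finitely many remaining values $n\geq 2$ are absorbed by enlarging $C_m$, using that $\bb{E} M^m$ is a finite multiple of $\sigma^m$ there. This argument is entirely routine; the only point demanding a little care is the choice $\beta > 2$ for the threshold exponent, which is precisely what makes the factor $n$ from the union bound negligible against the Gaussian decay $e^{-s_0^2/4}=n^{-\beta/2}$, so that the tail stays of lower order than the main term $t_0^m$.
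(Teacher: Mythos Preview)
Your proof is correct and follows the same overall strategy as the paper's: express $\bb{E}M^m$ via the layer-cake formula, split the integral at a threshold of order $\sigma\sqrt{\log n}$, bound the low part trivially by $t_0^m$, and control the tail via the union bound plus the sub-Gaussian estimate. The differences are only in execution. The paper splits at the tighter threshold $\sigma\sqrt{2\log n}$ and, for even $m$, evaluates the tail integral exactly by the substitution $u=t^2/2$ and the closed form of the incomplete gamma function, obtaining a contribution of order $(\log n)^{m/2-1}$; odd $m$ is then handled separately via $\bb{E}M^m\leq\paren{\bb{E}M^{2m}}^{1/2}$. Your choice of a larger threshold ($\beta=3$) together with the cruder splitting $e^{-s^2/2}=e^{-s^2/4}\cdot e^{-s^2/4}$ makes the tail $o(1)$ directly and works uniformly for all $m\in\bb{N}$, so you avoid the even/odd dichotomy at the cost of a larger leading constant ($6^{m/2}$ versus the paper's $2^{m/2}$). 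Since the lemma only asks for \emph{some} $C_m$, both routes are equally valid; yours is a bit more elementary, the paper's gives a slightly sharper constant.

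One small remark: the inequality $s^{m-1}e^{-s^2/4}\leq C'_m$ actually holds for all $s\geq 0$ (your $C'_m$ is the global maximum), so the restriction $s_0\geq\sqrt{2(m-1)}$ is not needed at that step; the handling of small $n$ is harmless but could be dropped.
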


\begin{proof}
Without loss of generality, we assume that $\sigma=1$ and $n\geq 4$. Consider the case that $m$ is an even number. Set $\beta_n = \sqrt{2\log n}$. Applying Fubini's Theorem, we get 
\begin{eqnarray}\label{Eq1App1}
\bb{E}\max_{1\leq j\leq n} \abs{X_j}^m &=& \int_{0}^{\infty} mt^{m-1}\bb{P}\paren{\max_{1\leq j\leq n}\abs{X_j} \geq t} dt \nonumber\\
&=& \int_{0}^{\beta_n} mt^{m-1}\bb{P}\paren{\max_{1\leq j\leq n}\abs{X_j} \geq t} dt+ \int_{\beta_n}^{\infty} mt^{m-1}\bb{P}\paren{\max_{1\leq j\leq n}\abs{X_j} \geq t} dt.
\end{eqnarray}
Next, we control the two terms in the second line of Eq. \eqref{Eq1App1} from above. Observe that,
\begin{equation}\label{Eq2App1}
\int_{0}^{\beta_n} mt^{m-1}\bb{P}\paren{\max_{1\leq j\leq n}\abs{X_j} \geq t} dt \leq \int_{0}^{\beta_n} mt^{m-1} dt = \beta^m_n = \paren{2\log n}^{m/2}.
\end{equation}
Due to the union bound, we have
\begin{equation*}
\bb{P}\paren{\max_{1\leq j\leq n}\abs{X_j} \geq t} \leq \sum_{j=1}^{n} \bb{P}\paren{\abs{X_j} \geq t}\leq 2ne^{-\frac{t^2}{2}}.
\end{equation*}
Using simple integration by substitution techniques, we get
\begin{eqnarray}\label{Eq3App1}
\int_{\beta_n}^{\infty} mt^{m-1}\bb{P}\paren{\max_{1\leq j\leq n}\abs{X_j} \geq t} dt &\leq& 2nm \int_{\beta_n}^{\infty} t^{m-1}e^{-\frac{t^2}{2}} dt = nm2^{m/2}\int_{\log n}^{\infty} t^{m/2-1}e^{-t} dt\nonumber\\
&=& mn2^{m/2} \paren{m/2-1}! e^{-x}\sum_{l=0}^{m/2-1}\frac{x^l}{l!}\Big\lvert^{\infty}_{\log n}\nonumber\\
&=& m2^{m/2} \sum_{l=0}^{m/2-1}\frac{\paren{m/2-1}!\log^l n}{l!}
\end{eqnarray}
Notice that when $n\geq 4$ ($\log n > 1$), there is a bounded scalar $c_m$ such that
\begin{equation*}
m\sum_{l=0}^{m/2-1}\frac{\paren{m/2-1}!\log^l n}{l!} \leq c_m\log^{m/2-1} n.
\end{equation*} 
Finally, replacing Eq. \eqref{Eq2App1} and \eqref{Eq3App1} into Eq. \eqref{Eq1App1} yields
\begin{equation*}
\bb{E}\max_{1\leq j\leq n} \abs{X_j}^m \leq 2^{m/2}\paren{1+ \frac{c_m}{\log n}} \log^{m/2} n \leq 2^{m/2}\paren{1+\frac{c_m}{\log 4}} = C_m \log^{m/2} n,
\end{equation*}
which is the desired upper bound on $\bb{E}\max_{1\leq j\leq n} \abs{X_j}^m$. The proof for an odd $m$ is an immediate consequence of the fact that (which is implied by Holder inequality)
\begin{equation*}
\bb{E}\max_{1\leq j\leq n} \abs{X_j}^m \leq \sqrt{\bb{E}\max_{1\leq j\leq n} \abs{X_j}^{2m}}\leq \sqrt{C_{2m}\log^{m} n} = \sqrt{C_{2m}}\log^{m/2} n.
\end{equation*}
\end{proof}

\begin{lem}\label{ChernoffBound}
Let $\set{\boldsymbol{Z}_j}^n_{j=1}$ be i.i.d. $d$-dimensional standard Gaussian column vectors. Let $\boldsymbol{A}\in\bb{R}^{d\times d}$ be a symmetric matrix, and let $\set{t_m:m\in\bb{N}}$ be a divergent sequence with
\begin{equation*}
\lim\limits_{m\rightarrow\infty} \frac{t^6_m}{m} = 0.
\end{equation*}
Then, the following inequality holds, when $n\rightarrow\infty$.
\begin{equation}
p_n\coloneqq \bb{P}\brac{\sum_{j=1}^{n}\paren{\frac{\boldsymbol{Z}^\top_j\boldsymbol{A}\boldsymbol{Z}_j-\tr \boldsymbol{A}}{\sqrt{2n}\LpNorm{\boldsymbol{A}}{2}}} \geq t_n} \leq e^{-\frac{t^2_n}{2}} \paren{1+o\paren{1}}.
\end{equation}
\end{lem}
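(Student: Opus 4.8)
The plan is to use a sharp Chernoff bound, controlling the cumulant generating function of the normalized quadratic form precisely enough to isolate the Gaussian-rate factor $e^{-t_n^2/2}$ with a remainder that vanishes under the hypothesis $t_n^6/n\to 0$. Put $W_j := \paren{\boldsymbol{Z}_j^\top\boldsymbol{A}\boldsymbol{Z}_j - \tr\boldsymbol{A}}\big/\paren{\sqrt{2}\,\LpNorm{\boldsymbol{A}}{2}}$. Since $\var\paren{\boldsymbol{Z}_j^\top\boldsymbol{A}\boldsymbol{Z}_j} = 2\tr\paren{\boldsymbol{A}^2} = 2\LpNorm{\boldsymbol{A}}{2}^2$, each $W_j$ is centered with unit variance, the $W_j$ are i.i.d., and $p_n = \bb{P}\brac{n^{-1/2}\sum_{j=1}^{n}W_j \geq t_n}$. (The degenerate case $\boldsymbol{A}=\zero$ is excluded, as then the statement is vacuous.)

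First I would compute the cumulant generating function of $W_1$ exactly. Diagonalizing $\boldsymbol{A}$ with eigenvalues $\mu_1,\dots,\mu_d$ and writing $a_i := \mu_i/\LpNorm{\boldsymbol{A}}{2}$, so that $\sum_i a_i^2 = 1$ and hence $\abs{a_i}\leq 1$ for every $i$, the chi-square moment generating function yields, for $\abs{s} < 1/\sqrt{2}$,
\[
\log\bb{E}\,e^{sW_1} = -\frac{s}{\sqrt{2}}\sum_i a_i - \frac12\sum_i\log\paren{1-\sqrt{2}\,s a_i} = \frac{s^2}{2} + \frac12\sum_{k\geq 3}\frac{\paren{\sqrt{2}\,s}^k}{k}\sum_i a_i^k,
\]
where the second equality uses the power series of $-\log(1-x)$ and cancels the linear and quadratic contributions against the centering and the normalization $\sum_i a_i^2 = 1$. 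The dimension-free key observation is that $\abs{\sum_i a_i^k}\leq\sum_i\abs{a_i}^k\leq\sum_i a_i^2 = 1$ for all $k\geq 2$, so the remainder above is bounded by $\tfrac16\,\abs{\sqrt{2}\,s}^3/\paren{1-\abs{\sqrt{2}\,s}}$ uniformly in $\boldsymbol{A}$ and in $d$.

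Then I would run the standard exponential-Markov step: $p_n\leq e^{-s\sqrt{n}\,t_n}\paren{\bb{E}\,e^{sW_1}}^{n}$, and choose $s = t_n/\sqrt{n}$, which is admissible for large $n$ because $t_n^6/n\to0$ forces $t_n/\sqrt{n} = o\paren{n^{-1/3}}\to0$. With this choice the quadratic part contributes exactly $-s\sqrt{n}\,t_n + n s^2/2 = -t_n^2/2$, while the higher-order remainder, summed over the $n$ i.i.d. terms, is at most a constant times $n s^3 = t_n^3/\sqrt{n} = \sqrt{t_n^6/n} = o(1)$. Exponentiating gives $p_n\leq e^{-t_n^2/2}\,e^{o(1)} = e^{-t_n^2/2}\paren{1+o(1)}$, as claimed.

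The one genuinely delicate point — and the reason a generic sub-exponential or Hanson--Wright tail bound does not suffice, since that would only give $e^{-c t_n^2}$ with $c<1/2$ — is extracting the \emph{sharp} leading constant $1/2$ uniformly over all symmetric $\boldsymbol{A}$ and all dimensions. That is exactly what the normalization $a_i=\mu_i/\LpNorm{\boldsymbol{A}}{2}$ together with the elementary bound $\sum_i\abs{a_i}^k\leq1$ for $k\geq2$ delivers: every cumulant of $W_1$ of order $\geq3$ is bounded by an absolute constant, so the cubic-and-higher terms contribute $O\paren{n s^3}$, which the hypothesis $t_n^6/n\to0$ is precisely calibrated to annihilate.
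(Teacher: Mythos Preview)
Your proof is correct and follows essentially the same route as the paper: compute the exact cumulant generating function of the quadratic form, apply the Chernoff bound with the optimal tilting parameter $s=t_n/\sqrt{n}$ (equivalently the paper's $\beta_n=t_n/(\sqrt{2n}\,\LpNorm{\boldsymbol{A}}{2})$), and show that the cubic-and-higher remainder is $o(1)$ under $t_n^6/n\to 0$. The one noteworthy difference is in the remainder estimate: the paper bounds $\sum_{i=1}^d f(2\beta_n\lambda_i)$ by $d\cdot\max_i\abs{f(2\beta_n\lambda_i)}=\cc{O}\paren{d\sqrt{t_n^6/n}}$, implicitly treating $d$ as fixed, whereas your normalization $a_i=\mu_i/\LpNorm{\boldsymbol{A}}{2}$ together with $\sum_i\abs{a_i}^k\leq\sum_i a_i^2=1$ yields a genuinely dimension-free $\cc{O}\paren{\sqrt{t_n^6/n}}$ bound --- a small but clean refinement.
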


\begin{proof}
We control the moment generating function of $\sum_{j=1}^{n}\boldsymbol{Z}^\top_j\boldsymbol{A}\boldsymbol{Z}_j$ from above. Select 
\begin{equation}\label{Eq1LemmaA2}
\beta_n \coloneqq \frac{t_n}{\LpNorm{\boldsymbol{A}}{2}\sqrt{2n}},\quad \Longleftrightarrow\quad \beta_n\sqrt{\frac{2}{n}}\LpNorm{\boldsymbol{A}}{2} = \frac{t_n}{n}.
\end{equation}	
Notice that $\beta_n$ tends to zero, as $n\rightarrow\infty$. Since $\boldsymbol{Z}_j,\;j=1,\ldots, n$ are i.i.d., then
\begin{eqnarray*}
n^{-1}\log\bb{E}\exp\paren{\sum_{j=1}^{n}\beta_n\boldsymbol{Z}_j\boldsymbol{A}\boldsymbol{Z}_j} &=& \log\bb{E}\exp^{\beta_n\boldsymbol{Z}_1\boldsymbol{A}\boldsymbol{Z}_1} = \log\int_{\bb{R}^d} \paren{2\pi}^{-d/2}\exp\paren{\frac{\boldsymbol{u}^\top\paren{2\beta_n\boldsymbol{A}-\boldsymbol{I}_d}\boldsymbol{u}}{2}} d\boldsymbol{u}\\
&\RelNum{\paren{a}}{=}& \frac{-1}{2n}\log\det\Bigparen{-2\beta_n\boldsymbol{A}+\boldsymbol{I}_d}.
\end{eqnarray*}
We know that the identity $\paren{a}$ is valid, as $2\beta_n\OpNorm{\boldsymbol{A}}{2}{2} < 1$ for large enough $n$. With above identity, the \emph{Chernoff bound} can be written in the following from.
\begin{eqnarray}\label{Eq2LemmaA2}
\frac{\log p_n}{n} &\leq& -\beta_n\tr\paren{\boldsymbol{A}}-\beta_nt_n\sqrt{\frac{2}{n}}\LpNorm{\boldsymbol{A}}{2}+\log\bb{E}\exp^{\beta_n\boldsymbol{Z}_1\boldsymbol{A}\boldsymbol{Z}_1}\nonumber\\
&=&-\beta_n\tr\paren{\boldsymbol{A}}-\beta_nt_n\sqrt{\frac{2}{n}}\LpNorm{\boldsymbol{A}}{2}-\frac{1}{2}\log\det\Bigparen{-2\beta_n\boldsymbol{A}+\boldsymbol{I}_d}\nonumber\\
&=&\frac{-1}{2}\sum_{i=1}^{d}\Bigparen{2\beta_n\lambda_i\paren{\boldsymbol{A}}+\log\paren{1-2\beta_n\lambda_i\paren{\boldsymbol{A}}}}-\beta_nt_n\sqrt{\frac{2}{n}}\LpNorm{\boldsymbol{A}}{2}.
\end{eqnarray}
For brevity, define $f:\parbra{-\infty, 1}\mapsto\bb{R}$ by $f\paren{x} = x+\log\paren{1-x}+x^2/2$. Selection of $\beta_n$ in Eq. \eqref{Eq1LemmaA2} implies that
\begin{equation*}
\beta_nt_n\sqrt{\frac{2}{n}}\LpNorm{\boldsymbol{A}}{2} = \frac{t^2_n}{2n} + \beta^2_n\LpNorm{\boldsymbol{A}}{2}^2.
\end{equation*} 
Hence, Eq. \eqref{Eq2LemmaA2} can be rewritten as
\begin{eqnarray*}
\frac{\log p_n}{n} &\leq& \frac{-1}{2}\sum_{i=1}^{d}\Bigparen{2\beta_n\lambda_i\paren{\boldsymbol{A}}+\log\paren{1-2\beta_n\lambda_i\paren{\boldsymbol{A}}}}-\beta^2_n\LpNorm{\boldsymbol{A}}{2}^2-\frac{t^2_n}{2n}\\
&=&\frac{-1}{2}\sum_{i=1}^{d}\Bigparen{2\beta_n\lambda_i\paren{\boldsymbol{A}}+\log\paren{1-2\beta_n\lambda_i\paren{\boldsymbol{A}}}-\frac{\paren{2\beta_n\lambda_i\paren{\boldsymbol{A}}}^2}{2}}-\frac{t^2_n}{2n} \\
&=& \frac{-1}{2}\sum_{i=1}^{d}f\Bigparen{2\beta_n\lambda_i\paren{\boldsymbol{A}}}-\frac{t^2_n}{2n} = \frac{1}{n}\brac{-\frac{n}{2}\sum_{i=1}^{d}f\Bigparen{2\beta_n\lambda_i\paren{\boldsymbol{A}}}-\frac{t^2_n}{2}}.
\end{eqnarray*}
So, it suffices to prove that $n\sum_{i=1}^{d}f\Bigparen{2\beta_n\lambda_i\paren{\boldsymbol{A}}} = o\paren{1}$, as $n\rightarrow\infty$. Since $2\beta_n\OpNorm{\boldsymbol{A}}{2}{2}$ lies in a small neighborhood of $0$, for large $n$, then the Taylor expansion of $f$ near zero yields
\begin{equation*}
f\Bigparen{2\beta_n\lambda_i\paren{\boldsymbol{A}}} = \sum_{l=3}^{\infty} \frac{\Bigparen{-2\beta_n\lambda_i\paren{\boldsymbol{A}}}^l}{l} = \cc{O}\paren{\frac{t^3_n}{n^{3/2}}},\quad \forall\;i=1,\ldots, d.
\end{equation*}
Hence,
\begin{equation*}
\abs{n\sum_{i=1}^{d}f\Bigparen{2\beta_n\lambda_i\paren{\boldsymbol{A}}}} \leq nd\max_{1\leq i \leq d}\abs{f\Bigparen{2\beta_n\lambda_i\paren{\boldsymbol{A}}}} = \cc{O}\paren{nd\frac{t^3_n}{n^{3/2}}} = \cc{O}\paren{\sqrt{\frac{t^6_n}{n}}}.
\end{equation*}
We conclude the proof by recalling that $n^{-1}t^6_n\rightarrow 0$, when $n\rightarrow\infty$.
\end{proof}

\begin{lem}\label{SampleCovarSupNormUppBnd}
Let $\boldsymbol{Z}_i,\;i=1,\ldots, w$ be i.i.d. zero-mean Gaussian random vectors in $\bb{R}^p$ with covariance matrix $\boldsymbol{\Sigma}$. For any $\xi >0$, there is a bounded scalar $C_\xi$ (depending only on $\xi$) such that
\begin{equation*}
\bb{P}\paren{\LpNorm{\frac{1}{w}\sum_{j=0}^{w} \boldsymbol{Z}_j\boldsymbol{Z}^\top_j - \boldsymbol{\Sigma} }{\infty} \geq C_\xi\max_{1\leq j \leq p} \boldsymbol{\Sigma}_{jj}\sqrt{\frac{\log p}{w}} } \leq p^{-\xi}.
\end{equation*}
\end{lem}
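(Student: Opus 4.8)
The plan is to combine a union bound over the $p^2$ entries of the matrix with an entrywise sub‑exponential (Bernstein‑type) tail estimate. Write $\boldsymbol{W}_w \coloneqq \frac{1}{w}\sum_{j=1}^{w}\boldsymbol{Z}_j\boldsymbol{Z}_j^\top - \boldsymbol{\Sigma}$, so that $\LpNorm{\boldsymbol{W}_w}{\infty} = \max_{1\le u,v\le p}\abs{\paren{\boldsymbol{W}_w}_{uv}}$, and set $M\coloneqq \max_{1\le k\le p}\boldsymbol{\Sigma}_{kk}$. By the union bound it suffices to show that, for some $C_\xi<\infty$ and every fixed pair $\paren{u,v}$,
\begin{equation*}
\bb{P}\paren{\abs{\frac{1}{w}\sum_{j=1}^{w}\Bigparen{\paren{\boldsymbol{Z}_j}_u\paren{\boldsymbol{Z}_j}_v - \boldsymbol{\Sigma}_{uv}}}\ge C_\xi M\sqrt{\frac{\log p}{w}}}\le p^{-\paren{\xi+2}},
\end{equation*}
after which summing over the at most $p^2$ pairs and relabelling $C_\xi$ gives the claim.

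For a fixed $\paren{u,v}$ I would reduce to a standardized form: writing $\paren{\boldsymbol{Z}_j}_u=\sqrt{\boldsymbol{\Sigma}_{uu}}\,g_j$ and $\paren{\boldsymbol{Z}_j}_v=\sqrt{\boldsymbol{\Sigma}_{vv}}\,h_j$, the pairs $\set{\paren{g_j,h_j}}_{j=1}^{w}$ are i.i.d.\ centered bivariate Gaussian with unit marginal variances and correlation $\rho=\boldsymbol{\Sigma}_{uv}/\sqrt{\boldsymbol{\Sigma}_{uu}\boldsymbol{\Sigma}_{vv}}\in\brac{-1,1}$, and the centered summands equal $\sqrt{\boldsymbol{\Sigma}_{uu}\boldsymbol{\Sigma}_{vv}}\,\paren{g_jh_j-\rho}$. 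Using the polarization identity $g_jh_j=\tfrac14\brac{\paren{g_j+h_j}^2-\paren{g_j-h_j}^2}$ exhibits $g_jh_j-\rho$ as a centered difference of scaled $\chi^2_1$ variables, hence sub‑exponential with $\psi_1$‑norm at most an absolute constant $c_0$. A standard Bernstein inequality for i.i.d.\ centered sub‑exponential variables then gives, for all $t\ge 0$,
\begin{equation*}
\bb{P}\paren{\abs{\frac{1}{w}\sum_{j=1}^{w}\sqrt{\boldsymbol{\Sigma}_{uu}\boldsymbol{\Sigma}_{vv}}\,\paren{g_jh_j-\rho}}\ge t}\le 2\exp\paren{-c_1 w\min\Bigset{\frac{t^2}{c_0^2\,\boldsymbol{\Sigma}_{uu}\boldsymbol{\Sigma}_{vv}},\ \frac{t}{c_0\sqrt{\boldsymbol{\Sigma}_{uu}\boldsymbol{\Sigma}_{vv}}}}}
\end{equation*}
for an absolute constant $c_1>0$. (The diagonal entries $u=v$ are covered by the same display, or alternatively directly by Lemma \ref{ChernoffBound}.)

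To conclude, note $\sqrt{\boldsymbol{\Sigma}_{uu}\boldsymbol{\Sigma}_{vv}}\le M$, so plugging $t=C_\xi M\sqrt{\log p/w}$ into the last display gives a first argument of the minimum no smaller than $\paren{C_\xi^2/c_0^2}\log p$ and a second argument no smaller than $\paren{C_\xi/c_0}\sqrt{w\log p}$; in the regime in which this lemma is used — where $\log p=\cc{O}\paren{w}$, indeed $\log p=o\paren{w}$ in Theorems \ref{Thm2} and \ref{Thm3} — the first term is the active (smaller) one once $w$ is large, so the tail is at most $2\exp\paren{-\paren{c_1C_\xi^2/c_0^2}\log p}$. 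Choosing $C_\xi$ large enough that $c_1C_\xi^2/c_0^2\ge \xi+2$ (with a harmless extra additive slack to absorb the factor $2$) yields the per‑entry bound, and the union bound over the $p^2$ entries finishes the argument.

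The main obstacle is the bookkeeping in the Bernstein step rather than any deep idea: one must (i) obtain uniform sub‑exponential control of the products $\paren{\boldsymbol{Z}_j}_u\paren{\boldsymbol{Z}_j}_v$ with the scale captured \emph{exactly} by $M=\max_k\boldsymbol{\Sigma}_{kk}$ rather than by a dimension‑dependent quantity, and (ii) verify that the sub‑Gaussian branch of the Bernstein bound, not the heavy‑tailed linear branch, governs at the chosen radius $C_\xi M\sqrt{\log p/w}$ — which is precisely what forces the mild requirement $\log p\lesssim w$ satisfied wherever the lemma is invoked.
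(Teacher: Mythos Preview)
The paper does not supply its own proof of this lemma: in Appendix~\ref{AppendixA} the authors explicitly drop the proof and refer to \cite{cai2011constrained} (p.~605). Your union bound over the $p^2$ entries followed by an entrywise Bernstein-type tail bound for products of correlated Gaussians is exactly the standard argument used there, so your approach matches the intended one. Your observation that the sub-Gaussian branch of Bernstein governs only when $\log p \lesssim w$ is correct and worth noting: the lemma as stated, with threshold proportional to $\sqrt{\log p/w}$ alone, implicitly relies on this regime, and indeed every invocation in the paper (Theorems~\ref{Thm2} and~\ref{Thm3}) assumes $w^{-1}\log p \to 0$.
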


\section{The non-asymptotic analysis of the inner product of dependent Gaussian random vectors}\label{AppendixB}

Let $\boldsymbol{X}, \boldsymbol{Y}\in\bb{R}^n$ be standard Gaussian column vectors with $\cov\paren{X_i, Y_i} = r,\;\forall\;i$. Set
\begin{equation}\label{Vn}
V_n\paren{r} \coloneqq \frac{1}{\sqrt{n\paren{1+r^2}}}\paren{\boldsymbol{X}^\top\boldsymbol{Y}-nr}.
\end{equation}
Also, define the \emph{standardized incomplete gamma function} by
\begin{equation*}
Q\paren{m, z} \coloneqq \frac{1}{\Gamma\paren{m}}\int_{z}^{\infty} x^{m-1} e^{-x} dx,
\end{equation*}
where $\Gamma\paren{\cdot}$ denotes the gamma function. 

\begin{thm}\label{MainThmApp}
Let $Z$ be a standard normal random variable. Let $\set{t_n}_{n\in\bb{N}}$ be a positive sequence such that $t_n\rightarrow\infty$ and
\begin{equation}\label{Cond}
\lim\limits_{n\rightarrow\infty} \frac{t^6_n}{n} = 0.
\end{equation}
Then,
\begin{equation*}
\lim\limits_{n\rightarrow\infty}\frac{\bb{P}\paren{V_n\paren{r} \geq t_n}}{\bb{P}\paren{Z\geq t_n}} = 1.
\end{equation*}
\end{thm}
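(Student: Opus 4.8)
The plan is to recognize Theorem~\ref{MainThmApp} as a Cram\'er-type moderate deviation statement for a normalized sum of i.i.d.\ sub-exponential random variables, and to prove it by the exponential-tilting (conjugate measure) method, after a short computation that makes the sub-exponential structure and the relevant cumulant generating function explicit.

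\emph{Step 1: canonical reduction.} The pairs $\paren{X_i,Y_i}$, $i=1,\dots,n$, are i.i.d.\ centered bivariate Gaussians with correlation $r$, so with $\xi_i\coloneqq\paren{X_iY_i-r}/\sqrt{1+r^2}$ we have $V_n\paren{r}=n^{-1/2}\sum_{i=1}^n\xi_i$, where the $\xi_i$ are i.i.d., $\bb{E}\xi_1=0$, and, by Isserlis' Theorem (since $\bb{E}\brac{X_1^2Y_1^2}=1+2r^2$), $\var\xi_1=1$. To obtain exponential moments I would use the polarization identity $X_iY_i=\tfrac14\brac{\paren{X_i+Y_i}^2-\paren{X_i-Y_i}^2}$ together with the fact that $X_i+Y_i$ and $X_i-Y_i$ are \emph{independent} centered Gaussians with variances $2\paren{1+r}$ and $2\paren{1-r}$. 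Hence $X_1Y_1$ is an affine combination of two independent $\chi^2_1$ variables, its cumulant generating function is explicit,
\[
K\paren{\lambda}=-\tfrac12\log\paren{1-\lambda\paren{1+r}}-\tfrac12\log\paren{1+\lambda\paren{1-r}},
\]
and it is finite and analytic on $\lambda\in\paren{-\tfrac{1}{1-r},\tfrac{1}{1+r}}$. Consequently $\psi\paren{\lambda}\coloneqq\log\bb{E}e^{\lambda\xi_1}$ is finite and analytic in a neighborhood of the origin with $\psi\paren{0}=\psi'\paren{0}=0$ and $\psi''\paren{0}=1$.

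\emph{Step 2: tilting.} Put $y=t_n\sqrt n$ and let $h_n$ solve $\psi'\paren{h_n}=t_n/\sqrt n$; since $t_n/\sqrt n\to0$ and $\psi''\paren{0}=1$, we get $h_n\to0$ with $h_n\sim t_n/\sqrt n$. Under the conjugate measure $\bb{P}_{h_n}$ with Radon--Nikodym density proportional to $e^{h_nS_n}$ (where $S_n=\sum_i\xi_i$), the sum has mean $n\psi'\paren{h_n}=y$, and
\[
\bb{P}\paren{S_n\ge y}=e^{\,n\psi\paren{h_n}-h_ny}\;\bb{E}_{h_n}\brac{e^{-h_n\paren{S_n-y}}\ind{S_n\ge y}}.
\]
The exponential prefactor equals $e^{-nI\paren{t_n/\sqrt n}}$ with $I=\psi^{\ast}$ the Legendre transform, and $I\paren{x}=x^2/2+\cc{O}\paren{x^3}$ near $0$; thus $nI\paren{t_n/\sqrt n}=t_n^2/2+\cc{O}\paren{t_n^3/\sqrt n}=t_n^2/2+o\paren{1}$, the error vanishing precisely because $t_n^6/n\to0$. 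For the remaining factor, under $\bb{P}_{h_n}$ the variable $T_n\coloneqq S_n-y$ is centered with variance $n\psi''\paren{h_n}\sim n$, and a Berry--Esseen / local-limit estimate --- uniform over the shrinking tilts $h_n$, legitimate since $\xi_1$ is non-lattice with a finite third moment --- gives
\[
\bb{E}_{h_n}\brac{e^{-h_nT_n}\ind{T_n\ge0}}\sim\paren{h_n\sqrt{2\pi n\,\psi''\paren{h_n}}}^{-1}\sim\paren{t_n\sqrt{2\pi}}^{-1}.
\]
Multiplying the two pieces yields $\bb{P}\paren{V_n\paren{r}\ge t_n}\sim e^{-t_n^2/2}/\paren{t_n\sqrt{2\pi}}$, which is exactly the Mills-ratio asymptotics of $\bb{P}\paren{Z\ge t_n}$ as $t_n\to\infty$; dividing gives the asserted limit $1$. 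Equivalently, once Step~1 is in place, one may invoke a classical Cram\'er--Petrov moderate deviation theorem in the regime $t_n=o\paren{n^{1/6}}$.

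\emph{Main obstacle.} The delicate point is the ``local'' factor: showing that $\bb{E}_{h_n}\brac{e^{-h_nT_n}\ind{T_n\ge0}}$ is asymptotic to $\paren{h_n\sqrt{2\pi n}}^{-1}$ \emph{uniformly} as $h_n\to0$ requires a quantitative CLT (Edgeworth / Berry--Esseen) for the tilted sums with error constants controlled along $h_n\to0$, not merely a single CLT; this is exactly where the analyticity of $\psi$ near the origin and the explicit $\chi^2$-type representation of $\xi_1$ pay off. The remaining ingredients --- the reduction to a standardized i.i.d.\ sum, the cumulant computations, and the Taylor expansion of the rate function --- are routine.
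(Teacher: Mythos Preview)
Your proposal is correct and takes a genuinely different route from the paper. You recognize $V_n\paren{r}$ as a normalized i.i.d.\ sum whose summands satisfy Cram\'er's condition (the polarization $X_iY_i=\tfrac14\brac{\paren{X_i+Y_i}^2-\paren{X_i-Y_i}^2}$ and the independence of $X_i\pm Y_i$ make this transparent), and then run the exponential-tilting argument underlying the classical Cram\'er--Petrov moderate deviation theorem; the condition $t_n^6/n\to 0$ is precisely the regime $t_n=o\paren{n^{1/6}}$ in which the Cram\'er series contributes $o\paren{1}$. The paper instead proceeds by a direct, structure-specific argument: it first treats $r=1$ via the exact $\chi^2$ representation and asymptotics of the normalized incomplete gamma function $Q\paren{n/2,\cdot}$ (citing Nemes), and then for general $r$ writes $V_n\paren{r}$ as a mixture $\sqrt{\rho}\,\Psi_n+\sqrt{1-\rho}\,Z\sqrt{1+\sqrt{2/n}\,\Psi_n}$ with $\Psi_n\eqd V_n\paren{1}$, truncates on $\abs{\Psi_n Z}\le 4t_n^2$, and reduces to the $r=1$ case by conditioning. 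Your approach is shorter, situates the result in a well-known framework, and makes clear why the exponent $6$ arises (the cubic term in the Legendre transform of $\psi$); the paper's approach is more self-contained and avoids invoking the uniform Berry--Esseen/Edgeworth control under shrinking tilts that you correctly flag as the one nontrivial ingredient. Either way, once your Step~1 is established, the statement is an instance of a textbook theorem (e.g., Petrov, \emph{Sums of Independent Random Variables}, Ch.~VIII), so the ``main obstacle'' you name is already handled in the literature.
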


We first present some technical results for enhancing the readability of the lengthy proof of Theorem \ref{MainThmApp}. 

\begin{prop}\label{GaussInnerEqRep}
Let $Z$ be a standard normal random variable independent of $\boldsymbol{X}$. Then,
\begin{equation*}
V_n\paren{0} \eqd \frac{\LpNorm{\boldsymbol{X}}{2}}{\sqrt{n}}Z.
\end{equation*}
\end{prop}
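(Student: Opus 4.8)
The plan is to prove the identity by conditioning on $\boldsymbol{X}$ and exploiting the rotational invariance of the standard Gaussian law on $\bb{R}^n$. First I would observe that, by \eqref{Vn}, $V_n\paren{0}=n^{-1/2}\boldsymbol{X}^\top\boldsymbol{Y}$, and that for $r=0$ the pairs $\paren{X_i,Y_i}$, $i=1,\dots,n$, are i.i.d.\ with independent standard normal components, so that $\boldsymbol{X}$ and $\boldsymbol{Y}$ are independent $\cc{N}\paren{\zero_n,\boldsymbol{I}_n}$ vectors. Hence it suffices to show $\boldsymbol{X}^\top\boldsymbol{Y}\eqd \LpNorm{\boldsymbol{X}}{2}\,Z$ and then divide through by $\sqrt{n}$.

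The key step is the elementary fact that, conditionally on $\boldsymbol{X}=\boldsymbol{x}$, the scalar $\boldsymbol{x}^\top\boldsymbol{Y}=\sum_{i=1}^{n}x_iY_i$ is a fixed linear combination of the i.i.d.\ $\cc{N}\paren{0,1}$ coordinates of $\boldsymbol{Y}$, hence distributed as $\cc{N}\paren{0,\LpNorm{\boldsymbol{x}}{2}^2}$; that is, the conditional law of $\boldsymbol{X}^\top\boldsymbol{Y}$ given $\boldsymbol{X}$ is $\cc{N}\paren{0,\LpNorm{\boldsymbol{X}}{2}^2}$. Since $Z$ is standard normal and independent of $\boldsymbol{X}$, the conditional law of $\LpNorm{\boldsymbol{X}}{2}\,Z$ given $\boldsymbol{X}$ is the same Gaussian mixing kernel. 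I would then promote this equality of conditional laws to an equality of unconditional laws by comparing characteristic functions: for each $s\in\bb{R}$, the tower property together with the Gaussian characteristic function gives
\begin{equation*}
\bb{E}\exp\paren{\mathbf{i}s\,\boldsymbol{X}^\top\boldsymbol{Y}}=\bb{E}\exp\paren{-\tfrac{s^2}{2}\LpNorm{\boldsymbol{X}}{2}^2}=\bb{E}\exp\paren{\mathbf{i}s\,\LpNorm{\boldsymbol{X}}{2}\,Z},
\end{equation*}
so that $\boldsymbol{X}^\top\boldsymbol{Y}$ and $\LpNorm{\boldsymbol{X}}{2}\,Z$ share the same distribution, which yields the proposition after division by $\sqrt{n}$.

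I do not expect a genuine obstacle here: the argument is essentially a one-line conditioning computation, and the only point deserving a modicum of care is the measure-theoretic bookkeeping — identifying the regular conditional distribution of $\boldsymbol{X}^\top\boldsymbol{Y}$ given $\boldsymbol{X}$ and passing from its almost-sure form to a statement about the unconditional law — which the characteristic-function route above sidesteps cleanly. An equivalent, slightly more geometric presentation would decompose $\boldsymbol{Y}$ along $\boldsymbol{X}$ and its orthogonal complement and use that the coefficient $\boldsymbol{X}^\top\boldsymbol{Y}/\LpNorm{\boldsymbol{X}}{2}=\InnerProd{\boldsymbol{Y}}{\boldsymbol{X}/\LpNorm{\boldsymbol{X}}{2}}$ is, conditionally on $\boldsymbol{X}$, a standard normal variable; this is the same fact in a different guise, and it is precisely the representation that makes the non-asymptotic analysis of $V_n\paren{0}$ in Appendix \ref{AppendixB} tractable.
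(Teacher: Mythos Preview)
Your argument is correct and, in fact, more elementary than the paper's. The paper also writes $V_n\paren{0}=\frac{\LpNorm{\boldsymbol{X}}{2}}{\sqrt{n}}\InnerProd{\boldsymbol{Y}}{\boldsymbol{X}/\LpNorm{\boldsymbol{X}}{2}}$, but then invokes Basu's theorem (treating $\LpNorm{\boldsymbol{X}_\sigma}{2}^2/n$ as a complete sufficient statistic for a scale parameter $\sigma$ and $\InnerProd{\boldsymbol{Y}}{\boldsymbol{X}_\sigma/\LpNorm{\boldsymbol{X}_\sigma}{2}}$ as ancillary) to deduce that the two factors are independent, and separately uses rotational invariance to show the second factor is exactly $\cc{N}\paren{0,1}$. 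Your conditioning-plus-characteristic-function route bypasses Basu entirely and delivers the marginal identity in one stroke. The trade-off is that the paper's argument explicitly exhibits a standard normal variable $\InnerProd{\boldsymbol{Y}}{\boldsymbol{X}/\LpNorm{\boldsymbol{X}}{2}}$ independent of $\LpNorm{\boldsymbol{X}}{2}$, which is the \emph{joint} statement actually used downstream in Eq.~\eqref{Eq1} (where $\Psi_n$ is a function of $\LpNorm{\boldsymbol{X}}{2}$ and must remain coupled to the Gaussian factor). Your conditioning step contains this joint information as well---the conditional law of $\boldsymbol{X}^\top\boldsymbol{Y}$ given $\boldsymbol{X}$ matches that of $\LpNorm{\boldsymbol{X}}{2}Z$ given $\boldsymbol{X}$, so the pairs $\paren{\boldsymbol{X},\boldsymbol{X}^\top\boldsymbol{Y}}$ and $\paren{\boldsymbol{X},\LpNorm{\boldsymbol{X}}{2}Z}$ agree in law---but you would need to say this explicitly if the proposition is to serve its role in the proof of Theorem~\ref{MainThmApp}.
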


\begin{proof}
Trivially $V_n\paren{0} = \frac{\LpNorm{\boldsymbol{X}}{2}}{\sqrt{n}} \InnerProd{\boldsymbol{Y}}{\frac{\boldsymbol{X}}{\LpNorm{\boldsymbol{X}}{2}}}$. We first show that the two terms in the alternative representation of $V_n\paren{0}$ are independent. Choose $\sigma>0$ and set $\boldsymbol{X}_\sigma \coloneqq \sigma\boldsymbol{X}$. Note that $\LpNorm{\boldsymbol{X}_\sigma}{2}^2/n$ is the complete sufficient statistic for estimating $\sigma^2$. Furthermore,
\begin{equation*}
\InnerProd{\boldsymbol{Y}}{\frac{\boldsymbol{X}}{\LpNorm{\boldsymbol{X}}{2}}} = \InnerProd{\boldsymbol{Y}}{\frac{\boldsymbol{X}_\sigma}{\LpNorm{\boldsymbol{X}_\sigma}{2}}}
\end{equation*}
is ancillary to $\sigma$. So, our claim is an immediate consequence of \emph{Basu's theorem}. Thus, it suffices to show that
\begin{equation*}
\InnerProd{\boldsymbol{Y}}{\frac{\boldsymbol{X}}{\LpNorm{\boldsymbol{X}}{2}}} \sim \cc{N}\paren{0, 1}.
\end{equation*} 
Let $\boldsymbol{R}$ be the rotation matrix in $\bb{R}^n$ with $\frac{\boldsymbol{R}\boldsymbol{X}}{\LpNorm{\boldsymbol{X}}{2}} = \brac{1,0,\ldots,0}^\top$. The orthogonality of $\boldsymbol{R}$ yields
\begin{equation*}
\InnerProd{\boldsymbol{Y}}{\frac{\boldsymbol{X}}{\LpNorm{\boldsymbol{X}}{2}}} = \InnerProd{\boldsymbol{R}^\top \boldsymbol{X}}{\brac{1,0,\ldots,0}^\top} = \paren{\boldsymbol{R}^\top \boldsymbol{Y}}_1.
\end{equation*}
The rotational invariance feature of Gaussian vectors implies that $\paren{\boldsymbol{R}^\top \boldsymbol{Y}}_1\eqd Z$, concluding the proof.
\end{proof}

\begin{lem}\label{Lemma1}
Let $Z_1$ and $Z_2$ be two random variables. Then, for any $h\in\paren{0,\infty}$
\begin{equation*}
\bb{P}\paren{\abs{Z_1Z_2} \geq h^2} \leq \bb{P}\paren{\abs{Z_1 \geq h}} + \bb{P}\paren{\abs{Z_2 \geq h}}.
\end{equation*}
\end{lem}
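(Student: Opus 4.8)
The plan is to reduce the inequality to a single elementary containment of events followed by the union bound. The key observation is that the product $\abs{Z_1Z_2}$ can only be large if at least one of the factors is large: concretely, on the event that both $\abs{Z_1} < h$ and $\abs{Z_2} < h$, one has $\abs{Z_1Z_2} = \abs{Z_1}\,\abs{Z_2} < h\cdot h = h^2$. Contrapositively, this shows the deterministic pointwise inclusion of events
\begin{equation*}
\set{\abs{Z_1Z_2} \geq h^2} \subseteq \set{\abs{Z_1} \geq h} \cup \set{\abs{Z_2} \geq h}.
\end{equation*}

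From here I would simply apply monotonicity of probability and subadditivity (the union bound) to the right-hand side, which yields
\begin{equation*}
\bb{P}\paren{\abs{Z_1Z_2} \geq h^2} \leq \bb{P}\paren{\set{\abs{Z_1}\geq h}\cup\set{\abs{Z_2}\geq h}} \leq \bb{P}\paren{\abs{Z_1}\geq h} + \bb{P}\paren{\abs{Z_2}\geq h},
\end{equation*}
which is exactly the claimed bound. Since $h > 0$ is assumed, the multiplicativity $\abs{Z_1 Z_2} = \abs{Z_1}\abs{Z_2}$ and the strict inequality $\abs{Z_1}\abs{Z_2} < h^2$ used above are both valid without any further case distinction; no integrability or distributional assumptions on $Z_1, Z_2$ are needed.

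There is essentially no obstacle here: the only thing to be slightly careful about is the direction of the inequalities when passing from ``both factors strictly below $h$'' to ``the product strictly below $h^2$'' (one wants $<$ there so that its negation is the closed event $\geq h^2$ appearing in the statement), but this is immediate. The lemma is purely a packaging device, and its role downstream is to let one split tail bounds for quadratic forms such as $\boldsymbol{X}^\top\boldsymbol{Y}$ into tail bounds for the individual Gaussian coordinates, which is why it is stated in this generality.
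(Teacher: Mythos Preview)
Your proof is correct and is essentially the same argument as the paper's: the paper splits the event $\{\abs{Z_1 Z_2}\geq h^2\}$ according to whether $\abs{Z_1}\geq h$ or $\abs{Z_1}<h$ and observes that on the latter piece one must have $\abs{Z_2}\geq h$, which is exactly your contrapositive inclusion unwound case by case. Your packaging via the single set inclusion $\{\abs{Z_1 Z_2}\geq h^2\}\subseteq\{\abs{Z_1}\geq h\}\cup\{\abs{Z_2}\geq h\}$ is slightly cleaner, but the content is identical.
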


\begin{proof}
The following series of straightforward inequalities shows the desired result. 
\begin{eqnarray*}
\bb{P}\paren{\abs{Z_1Z_2} \geq h^2} &=& \bb{P}\brac{ \paren{\abs{Z_1Z_2} \geq h^2} \bigcap \paren{\abs{Z_1} \geq h}} + \bb{P}\brac{ \paren{\abs{Z_1Z_2} \geq h^2} \bigcap \paren{\abs{Z_1} \leq h}}\\
&\leq& \bb{P}\paren{\abs{Z_1} \geq h} + \bb{P}\brac{ \paren{\abs{Z_2} \geq \frac{h^2}{\abs{Z_1}}} \bigcap \paren{\abs{Z_1} \leq h}} \\
&\leq& \bb{P}\paren{\abs{Z_1} \geq h} + \bb{P}\brac{ \paren{\abs{Z_2} \geq h} \bigcap \paren{\abs{Z_1} \leq h}} \leq \bb{P}\paren{\abs{Z_1} \geq h} + \bb{P}\paren{\abs{Z_2} \geq h}.
\end{eqnarray*}
\end{proof}

We are now ready to establish Theorem \ref{MainThmApp}.
\begin{proof}[Proof of Theorem \ref{MainThmApp}]
Throughout the proof, $Z$ denotes a standard Gaussian random variable. Without loss of generality, we assume that $r$ is non-negative. For improving readability, we first focus on the special case of $r=1$. In this scenario, $\boldsymbol{X}=\boldsymbol{Y}$ and thus
\begin{equation*}
V_n\paren{1} = \frac{1}{\sqrt{2n}}\paren{\LpNorm{\boldsymbol{X}}{2}^2-n}.
\end{equation*}
We obtain an equivalent formulation for $\bb{P}\paren{V_n\paren{1} \geq t_n}$ by using the fact that $\LpNorm{\boldsymbol{X}}{2}^2$ is a $\chi^2$ random variable with $n$ degrees of freedom.
\begin{eqnarray}\label{QDist}
\bb{P}\paren{V_n\paren{1} \geq t_n} &=& \bb{P}\paren{\LpNorm{\boldsymbol{X}}{2}^2 \geq n+t_n\sqrt{2n}} = \int_{n+t_n\sqrt{2n}}^{\infty} \frac{u^{\frac{n}{2}-1}e^{-\frac{u}{2} }}{2^{\frac{n}{2}}\Gamma\paren{\frac{n}{2}} }du = \int_{\frac{n}{2}+t_n\sqrt{\frac{n}{2}}}^{\infty} \frac{u^{\frac{n}{2}-1}e^{-\frac{u}{2} }}{\Gamma\paren{\frac{n}{2}} }du\nonumber\\
&=& \Gamma^{-1}\paren{\frac{n}{2}} \int_{\frac{n}{2}+t_n\sqrt{\frac{n}{2}}}^{\infty} u^{\frac{n}{2}-1}e^{-\frac{u}{2}}du = Q\paren{\frac{n}{2}, \frac{n}{2}+t_n\sqrt{\frac{n}{2}}}.
\end{eqnarray}
The asymptotic properties of $Q\paren{\cdot,\cdot}$ has been studied in \cite{nemes2019asymptotic}. According to Theorem $1.1$ in \cite{nemes2019asymptotic}, as $n, t_n\rightarrow\infty$, we have
\begin{equation}\label{AsympIdentity}
Q\paren{\frac{n}{2}, \frac{n}{2}+t_n\sqrt{\frac{n}{2}}} \sim \bb{P}\paren{Z\geq t_n} + \sqrt{\frac{2}{n\pi}} \exp\paren{-\frac{t^2_n}{2}} \sum_{j=0}^{\infty} \paren{\frac{2}{n}}^{j/2} B_j\paren{t_n},
\end{equation}
in which $B_j\paren{x}$ is a polynomial of degree $\paren{3j+2}$ for any non-negative $x$. We proceed by evaluating the asymptotic behaviour of the second term in the right hand side of Eq. \eqref{AsympIdentity}. Observe that there exists a large enough scalar $C$ such that
\begin{eqnarray*}
\sqrt{\frac{2}{n\pi}} \exp\paren{-\frac{t^2_n}{2}} \sum_{j=0}^{\infty} \paren{\frac{2}{n}}^{j/2} B_j\paren{t_n} &\asymp& \frac{1}{\sqrt{n}} \exp\paren{-\frac{t^2_n}{2}} \sum_{j=0}^{\infty} \paren{\frac{Ct^6_n}{n}}^{j/2} t^2_n \\
&=& \frac{\exp\paren{-\frac{t^2_n}{2}}}{t_n}\sum_{j=1}^{\infty} \paren{\frac{Ct^6_n}{n}}^{j/2},
\end{eqnarray*}
as $n, t_n \rightarrow\infty$. It is also known that \cite{borjesson1979simple} 
\begin{equation*}
\bb{P}\paren{Z\geq t_n} \asymp \frac{\exp\paren{-\frac{t^2_n}{2}}}{t_n},\quad \mbox{when}\;t_n\rightarrow\infty.
\end{equation*}
So $\sqrt{\frac{2}{n\pi}} \exp\paren{-\frac{t^2_n}{2}} \sum_{j=0}^{\infty} \paren{\frac{2}{n}}^{j/2} B_j\paren{t_n} \asymp \bb{P}\paren{Z\geq t_n} \sum_{j=1}^{\infty} \paren{\frac{Ct^6_n}{n}}^{j/2}$. Therefore, Eq. \eqref{AsympIdentity} can be rewritten in the following from.
\begin{equation}\label{Eq7}
Q\paren{\frac{n}{2}, \frac{n}{2}+t_n\sqrt{\frac{n}{2}}} \sim \bb{P}\paren{Z\geq t_n}\paren{1+\sum_{j=1}^{\infty} \paren{\frac{Ct^6_n}{n}}^{j/2}}.
\end{equation}
The condition \eqref{Cond} implies that $\sum_{j=1}^{\infty} \paren{Cn^{-1}t^6_n}^{j/2}\rightarrow 0$. Thus, 
\begin{equation*}
\lim\limits_{n\rightarrow\infty} \frac{\bb{P}\paren{V_n\paren{1} \geq t_n} }{\bb{P}\paren{Z\geq t_n}} = \lim\limits_{n\rightarrow\infty} \frac{Q\paren{\frac{n}{2}, \frac{n}{2}+t_n\sqrt{\frac{n}{2}}} }{\bb{P}\paren{Z\geq t_n}} = 1 + \lim\limits_{n\rightarrow\infty} \sum_{j=1}^{\infty} \paren{Cn^{-1}t^6_n}^{j/2} = 1.
\end{equation*}

Next, we extend the proof to any $r\in\paren{-1, 1}$. Let $\boldsymbol{X'}$ be a standard Gaussian random vector independent of $\boldsymbol{X}$. Then, $\boldsymbol{Y}$ can be decomposed in the following way.
\begin{equation*}
\boldsymbol{Y} \eqd r\boldsymbol{X} + \sqrt{1-r^2}\boldsymbol{X'}.
\end{equation*}
For brevity, set $\rho \coloneqq \sqrt{\frac{2r^2}{1+r^2}}$ and  $\Psi_n \coloneqq \frac{\paren{\LpNorm{\boldsymbol{X}}{2}^2-n}}{\sqrt{2n}}$. The new representation of $\boldsymbol{Y}$ implies that
\begin{equation*}
V_n\paren{r} \eqd \sqrt{\rho} \frac{1}{\sqrt{2n}}\paren{\LpNorm{\boldsymbol{X}}{2}^2-n} + \sqrt{1-\rho} \frac{\InnerProd{\boldsymbol{X}}{\boldsymbol{X'}}}{\sqrt{n}} = \sqrt{\rho}\Psi_n +\sqrt{1-\rho} \frac{\InnerProd{\boldsymbol{X}}{\boldsymbol{X'}}}{\sqrt{n}}.
\end{equation*}
Applying the result in Proposition \ref{GaussInnerEqRep} implies that
\begin{eqnarray}\label{Eq1}
\bb{P}\paren{V_n\paren{r} \geq t_n} &=& \bb{P}\paren{\sqrt{\rho}\Psi_n +\sqrt{1-\rho} \frac{\InnerProd{\boldsymbol{X}}{\boldsymbol{X'}}}{\sqrt{n}} \geq t_n} = \bb{P}\paren{\sqrt{\rho}\Psi_n +\sqrt{1-\rho}Z\frac{\LpNorm{\boldsymbol{X}}{2}}{\sqrt{n}} \geq t_n}\nonumber\\
&=& \bb{P}\paren{\sqrt{\rho}\Psi_n +\sqrt{1-\rho}Z\sqrt{1+\sqrt{\frac{2}{n}}\Psi_n} \geq t_n}
\end{eqnarray}
We break the expression in the second line of Eq. \eqref{Eq1} into two terms. Set
\begin{align}
&p_0 \coloneqq \bb{P}\brac{\paren{\sqrt{\rho}\Psi_n +\sqrt{1-\rho}Z\sqrt{1+\sqrt{\frac{2}{n}}\Psi_n} \geq t_n} \bigcap \paren{\abs{\Psi_n Z} \leq 4t^2_n}},\nonumber\\
&p_1 \coloneqq \bb{P}\brac{\paren{\sqrt{\rho}\Psi_n +\sqrt{1-\rho}Z\sqrt{1+\sqrt{\frac{2}{n}}\Psi_n} \geq t_n} \bigcap \paren{\abs{\Psi_n Z} \geq 4t^2_n}}.
\end{align}
Thus, Eq. \eqref{Eq1} can be rewritten as $\bb{P}\paren{V_n\paren{r} \geq t_n} = p_0+p_1$. 
\begin{clawithinpf}\label{Claim1}
$\bb{P}^{-1}\paren{Z\geq t_n}p_1 \rightarrow 0$, as $n\rightarrow\infty$.
\end{clawithinpf}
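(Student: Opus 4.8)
The plan is to discard the first event in the definition of $p_1$ and bound $p_1$ from above by $\bb{P}\paren{\abs{\Psi_n Z}\geq 4t^2_n}$, and then to show that this last probability is exponentially smaller than $\bb{P}\paren{Z\geq t_n}$. Applying Lemma \ref{Lemma1} with $h = 2t_n$ gives
\begin{equation*}
p_1 \leq \bb{P}\paren{\abs{\Psi_n Z}\geq \paren{2t_n}^2} \leq \bb{P}\paren{\abs{\Psi_n}\geq 2t_n} + \bb{P}\paren{\abs{Z}\geq 2t_n},
\end{equation*}
so it suffices to control these two tail probabilities.

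For the Gaussian term, the standard tail estimate already used above (cf.\ \cite{borjesson1979simple}) yields $\bb{P}\paren{\abs{Z}\geq 2t_n} = 2\bb{P}\paren{Z\geq 2t_n} \asymp t_n^{-1}\exp\paren{-2t^2_n}$ as $t_n\rightarrow\infty$. For the term involving $\Psi_n$, recall that $\Psi_n = \paren{\LpNorm{\boldsymbol{X}}{2}^2-n}/\sqrt{2n}$ is a normalized centered $\chi^2_n$ variable, i.e.\ a sum of $n$ i.i.d.\ centered quadratic forms. I would apply Lemma \ref{ChernoffBound} in the degenerate case $d = 1$, $\boldsymbol{A} = 1$ (so $\tr\boldsymbol{A} = \LpNorm{\boldsymbol{A}}{2} = 1$ and the sum in the lemma is exactly $\Psi_n$), with the threshold sequence $2t_n$; the required hypothesis $\paren{2t_n}^6/n = 64\,t^6_n/n\rightarrow 0$ follows from condition \eqref{Cond}, giving $\bb{P}\paren{\Psi_n\geq 2t_n}\leq \exp\paren{-2t^2_n}\paren{1+o\paren{1}}$. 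Running the same lemma with $\boldsymbol{A} = -1$ handles the lower tail, so altogether $\bb{P}\paren{\abs{\Psi_n}\geq 2t_n}\lesssim \exp\paren{-2t^2_n}$.

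Combining the two bounds, $p_1\lesssim \exp\paren{-2t^2_n}$ up to a factor polynomial in $t_n$. Since $\bb{P}\paren{Z\geq t_n}\asymp t_n^{-1}\exp\paren{-t^2_n/2}$, the ratio obeys $\bb{P}^{-1}\paren{Z\geq t_n}\,p_1 \lesssim t_n\exp\paren{-\tfrac{3}{2}t^2_n}\rightarrow 0$ as $t_n\rightarrow\infty$, which is precisely Claim \ref{Claim1}. The only mild subtlety is checking that Lemma \ref{ChernoffBound} applies verbatim with $d=1$ and with the rescaled threshold $2t_n$ in place of $t_n$; this is immediate once one notes that rescaling $t_n$ by a constant does not affect condition \eqref{Cond}, and it is essentially the whole content of the argument, everything else being a one-line comparison of exponential tails.
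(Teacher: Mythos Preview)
Your proof is correct and follows essentially the same route as the paper: drop the first event, apply Lemma \ref{Lemma1} with $h=2t_n$, bound each tail by something of order $\exp\paren{-2t^2_n}$, and compare to $\bb{P}\paren{Z\geq t_n}\asymp t_n^{-1}\exp\paren{-t^2_n/2}$. The only difference is in how you control $\bb{P}\paren{\abs{\Psi_n}\geq 2t_n}$: the paper observes that $\Psi_n\eqd V_n\paren{1}$ and invokes the already-established $r=1$ case of the theorem to get $\bb{P}\paren{\Psi_n\geq 2t_n}\sim\bb{P}\paren{Z\geq 2t_n}$, whereas you appeal directly to Lemma \ref{ChernoffBound} with $d=1$ and $\boldsymbol{A}=\pm 1$; both yield the needed $\exp\paren{-2t^2_n}$ bound and your handling of the lower tail via $\boldsymbol{A}=-1$ is clean and arguably more self-contained.
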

\begin{proof}[Proof of Claim \ref{Claim1}]
An application of Lemma \ref{Lemma1} introduces an upper bound on $p_1$.
\begin{eqnarray}\label{p1UppBnd}
p_1 &\leq& \bb{P}\paren{\abs{\Psi_n Z} \geq 4t^2_n} \leq \bb{P}\paren{\abs{\Psi_n} \geq 2t_n} + \bb{P}\paren{\abs{Z} \geq 2t_n} \nonumber\\
&=& 2\brac{\bb{P}\paren{\Psi_n \geq 2t_n} + \bb{P}\paren{Z \geq 2t_n}}.
\end{eqnarray}
Since $\Psi_n$ has the same distribution as $V_n\paren{1}$, then (as we showed in the first part of the proof)
\begin{equation}\label{Eq2}
\lim\limits_{n,t_n\rightarrow\infty} \frac{\bb{P}\paren{\Psi_n \geq 2t_n}}{\bb{P}\paren{Z \geq 2t_n}} = 1.
\end{equation}
Combining Eq. \eqref{p1UppBnd} and \eqref{Eq2} implies that $p_1\lesssim \bb{P}\paren{Z\geq 2t_n}$. Hence,
\begin{equation*}
\lim\limits_{n,t_n\rightarrow\infty}\frac{p_1}{\bb{P}\paren{Z\geq t_n}} \lesssim \lim\limits_{n,t_n\rightarrow\infty}\frac{\bb{P}\paren{Z\geq 2t_n}}{\bb{P}\paren{Z\geq t_n}} \lesssim \lim\limits_{n,t_n\rightarrow\infty}\frac{\exp\paren{-2t^2_n}}{\exp\paren{-t^2_n/2}} = 0.
\end{equation*}
\end{proof}
In summary, we get the following asymptotic identity so far
\begin{equation*}
\frac{\bb{P}\paren{V_n\paren{r} \geq t_n}}{\bb{P}\paren{Z\geq t_n}} =  \frac{p_0+p_1}{\bb{P}\paren{Z\geq t_n}}\sim \frac{p_0}{\bb{P}\paren{Z\geq t_n}},\quad\mbox{when}\; n,t_n\rightarrow\infty.
\end{equation*}
Next, we introduce upper and lower bounds on $\frac{p_0}{\bb{P}\paren{Z\geq t_n}}$. If $\abs{\Psi_n Z} \leq 4t^2_n$, then
\begin{eqnarray*}
\sqrt{\rho}\Psi_n +\sqrt{1-\rho}Z\sqrt{1+\sqrt{\frac{2}{n}}\Psi_n} &=& \sqrt{\rho}\Psi_n +\sqrt{1-\rho}Z +\sqrt{1-\rho}Z\brac{\sqrt{1+\sqrt{\frac{2}{n}}\Psi_n}-1}\\
&\leq& \sqrt{\rho}\Psi_n +\sqrt{1-\rho}Z + \sqrt{\frac{1-\rho}{2n}}\abs{\Psi_n Z} \\
&\leq& \sqrt{\rho}\Psi_n +\sqrt{1-\rho}Z + \sqrt{\frac{8}{n}}t^2_n.
\end{eqnarray*}
Hence,
\begin{eqnarray*}
p_0 &=& \bb{P}\brac{\paren{\sqrt{\rho}\Psi_n +\sqrt{1-\rho}Z\sqrt{1+\sqrt{\frac{2}{n}}\Psi_n} \geq t_n} \bigcap \paren{\abs{\Psi_n Z} \leq 4t^2_n}} \\
&\leq& \bb{P}\paren{\sqrt{\rho}\Psi_n +\sqrt{1-\rho}Z \geq t_n-\sqrt{\frac{8}{n}}t^2_n}.
\end{eqnarray*}
Similarly, one can show that
\begin{equation*}
p_0 \geq \bb{P}\paren{\sqrt{\rho}\Psi_n +\sqrt{1-\rho}Z \geq t_n+\sqrt{\frac{8}{n}}t^2_n} - \bb{P}\paren{\abs{\Psi_n Z} \geq 4t^2_n}.
\end{equation*}
Thus, if we combine the upper and lower bounds on $p_0$ with the result in Claim \ref{Claim1}, we obtain
\begin{equation*}
\frac{ \bb{P}\paren{\sqrt{\rho}\Psi_n +\sqrt{1-\rho}Z \geq t_n+\sqrt{\frac{8}{n}}t^2_n}}{\bb{P}\paren{Z\geq t_n}} \leq \frac{p_0}{\bb{P}\paren{Z\geq t_n}} \leq \frac{ \bb{P}\paren{\sqrt{\rho}\Psi_n +\sqrt{1-\rho}Z \geq t_n-\sqrt{\frac{8}{n}}t^2_n}}{\bb{P}\paren{Z\geq t_n}}.
\end{equation*}
Set 
\begin{equation*}
t^+_n \coloneqq t_n+\frac{\sqrt{8}t^2_n}{\sqrt{n}}, \quad \mbox{and} \quad t^-_n \coloneqq t_n-\frac{\sqrt{8}t^2_n}{\sqrt{n}}
\end{equation*}
Since $\frac{t^3_n}{\sqrt{n}}\rightarrow 0$, then $\bb{P}\paren{Z\geq t^+_n} \sim \bb{P}\paren{Z\geq t_n} \sim \bb{P}\paren{Z\geq t^-_n}$. Thus,
\begin{eqnarray}\label{Eq3}
\lim\limits_{n,t_n\rightarrow\infty}\frac{\bb{P}\paren{\sqrt{\rho}\Psi_n +\sqrt{1-\rho}Z\geq t^+_n}}{\bb{P}\paren{Z\geq t^+_n}}&\leq& \lim\limits_{n,t_n\rightarrow\infty}\frac{\bb{P}\paren{V_n\paren{r} \geq t_n}}{\bb{P}\paren{Z\geq t_n}} \nonumber\\
&\leq& \lim\limits_{n,t_n\rightarrow\infty}\frac{\bb{P}\paren{\sqrt{\rho}\Psi_n \sqrt{1-\rho}Z\geq t^-_n}}{\bb{P}\paren{Z\geq t^-_n}}.
\end{eqnarray}
Lastly, we show that the upper and lower bounds on $\frac{\bb{P}\paren{V_n\paren{r} \geq t_n}}{\bb{P}\paren{Z\geq t_n}}$ in Eq. \eqref{Eq3} converge to one, if $n,t_n\rightarrow\infty$ in a way that the condition \eqref{Cond} is satisfied. We only prove that
\begin{equation}\label{Eq5}
\frac{\bb{P}\paren{\sqrt{\rho}\Psi_n +\sqrt{1-\rho}Z\geq t^-_n}}{\bb{P}\paren{Z\geq t^-_n}} \rightarrow 1,\quad\mbox{when}\;t_n, n\rightarrow\infty.
\end{equation}
The other asymptotic identity can be proved in an analogous way. Without loss of generality we can assume that $\rho > 0$. Let $\phi\paren{\cdot}$ and $\bar{\Phi}\paren{\cdot}$ successively denote the standard Gaussian probability density and complementary cumulative distribution functions. Furthermore, define $\Xi_n\coloneqq \sqrt{\rho}\Psi_n +\sqrt{1-\rho}Z$. Recall the distribution of $\Psi_n$ from Eq. \eqref{QDist}. Observe that
\begin{equation*}
\bb{P}\paren{\Xi_n\geq t^-_n} = \bb{E}\brac{\bb{P}\paren{\Xi_n\geq t^-_n \arrowvert Z} } = \int_{\bb{R}} \phi\paren{u}Q\paren{\frac{n}{2}, \frac{n}{2}+\sqrt{\frac{n}{2}}\paren{\frac{t^-_n}{\sqrt{\rho}}-u\sqrt{\frac{1-\rho}{\rho}}}} du.
\end{equation*}
We now obtain a similar formulation for $\bb{P}\paren{Z\geq t^-_n}$. Notice that, there are two independent standard Gaussian random random variables such that $Z\eqd \sqrt{\rho} Z_1+\sqrt{1-\rho}Z_2$. Therefore, 
\begin{equation}\label{Eq4prime}
\bb{P}\paren{Z\geq t^-_n} = \bb{E}\brac{\bb{P}\paren{\sqrt{\rho} Z_1+\sqrt{1-\rho}Z_2\geq t^-_n \arrowvert Z_1} } = \int_{\bb{R}} \phi\paren{u} \bar{\Phi}\paren{\frac{t^-_n}{\sqrt{\rho}}-u\sqrt{\frac{1-\rho}{\rho}}} du.
\end{equation}
For brevity, define $\Delta\paren{x} \coloneqq Q\paren{\frac{n}{2}, \frac{n}{2}+\sqrt{\frac{n}{2}}x}-\bar{\Phi}\paren{x},\; \forall\; x\in\bb{R}$. Combining the last two identities, yields
\begin{equation}\label{Eq4}
\frac{\bb{P}\paren{\Xi_n\geq t^-_n}}{\bb{P}\paren{Z\geq t^-_n}}-1 = \frac{\int_{\bb{R}} \phi\paren{u}\Delta\paren{\frac{t^-_n}{\sqrt{\rho}}-u\sqrt{\frac{1-\rho}{\rho}}} du}{\bb{P}\paren{Z\geq t^-_n}}.
\end{equation}
Recall that $\rho=\sqrt{\frac{2r^2}{1+r^2}} > 0$ for some $r\in\paren{-1,1}$. So, it is possible to choose $\epsilon\in\paren{\rho, 1}$. Set
\begin{equation*}
\cc{B}_{\epsilon,\rho} \coloneqq \set{u:\; \abs{u}\leq t^-_n\sqrt{\frac{1-\epsilon}{1-\rho}}},\quad \cc{B}^c_{\epsilon,\rho} \coloneqq \set{u:\; \abs{u} > t^-_n\sqrt{\frac{1-\epsilon}{1-\rho}}}.
\end{equation*}
Now we can decompose the integral in Eq. \eqref{Eq4} into two parts.
\begin{equation}\label{Eq6}
\frac{\bb{P}\paren{\Xi_n\geq t^-_n}}{\bb{P}\paren{Z\geq t^-_n}}-1 = \frac{\int_{\cc{B}_{\epsilon,\rho}} \phi\paren{u}\Delta\paren{\frac{t^-_n}{\sqrt{\rho}}-u\sqrt{\frac{1-\rho}{\rho}}} du}{\bb{P}\paren{Z\geq t^-_n}} + \frac{\int_{\cc{B}^c_{\epsilon,\rho}} \phi\paren{u}\Delta\paren{\frac{t^-_n}{\sqrt{\rho}}-u\sqrt{\frac{1-\rho}{\rho}}} du}{\bb{P}\paren{Z\geq t^-_n}}.
\end{equation}
Let $p_0$ and $p_1$ stand for the two expressions on the right hand side of Eq. \eqref{Eq6}. For proving identity \eqref{Eq5}, it suffices to show that both $p_0$ and $p_1$ tend to zero. One line of straightforward algebra implies that
\begin{eqnarray*}
\abs{p_1} &=& \frac{\abs{\int_{\cc{B}^c_{\epsilon,\rho}} \phi\paren{u}\Delta\paren{\frac{t^-_n}{\sqrt{\rho}}-u\sqrt{\frac{1-\rho}{\rho}}} du}}{\bb{P}\paren{Z\geq t^-_n}} \leq \frac{\int_{\cc{B}^c_{\epsilon,\rho}} \phi\paren{u}\abs{\Delta\paren{\frac{t^-_n}{\sqrt{\rho}}-u\sqrt{\frac{1-\rho}{\rho}}} du}}{\bb{P}\paren{Z\geq t^-_n}} \nonumber\\
&\leq& \frac{\int_{\cc{B}^c_{\epsilon,\rho}} \phi\paren{u}du}{\bb{P}\paren{Z\geq t^-_n}} =  \frac{\bb{P}\paren{Z\geq t^-_n\sqrt{\frac{1-\epsilon}{1-\rho}}}}{\bb{P}\paren{Z\geq t^-_n}}\RelNum{\paren{a}}{\rightarrow} 0.
\end{eqnarray*}
Notice that, the asymptotic identity $\paren{a}$ holds, as $t^-_n\rightarrow\infty$ and $\sqrt{\frac{1-\epsilon}{1-\rho}}$ is strictly greater than $1$. We finally show that $\abs{p_0}\rightarrow 0$. Applying the triangle inequality implies that for any $u\in \cc{B}_{\epsilon,\rho}$,
\begin{equation*}
\abs{t^-_n} \paren{\frac{1+\sqrt{1-\epsilon}}{\sqrt{\rho}}}\geq \frac{\abs{t^-_n}}{\sqrt{\rho}}+\abs{u}\sqrt{\frac{1-\rho}{\rho}}\geq \abs{\frac{t^-_n}{\sqrt{\rho}}-u\sqrt{\frac{1-\rho}{\rho}}} \geq \frac{\abs{t^-_n}}{\sqrt{\rho}}-\abs{u}\sqrt{\frac{1-\rho}{\rho}}\geq \abs{t^-_n} \paren{\frac{1-\sqrt{1-\epsilon}}{\sqrt{\rho}}}.
\end{equation*} 
In words, $\paren{\frac{t^-_n}{\sqrt{\rho}}-u\sqrt{\frac{1-\rho}{\rho}}}$ grows with the same rate as $t_n$, $\forall\;u\in\cc{B}_{\epsilon,\rho}$. So, based on Eq. \eqref{Eq7},
\begin{equation}\label{Eq8}
\abs{\frac{\Delta\paren{\frac{t^-_n}{\sqrt{\rho}}-u\sqrt{\frac{1-\rho}{\rho}}}}{\bb{P}\paren{Z\geq \frac{t^-_n}{\sqrt{\rho}}-u\sqrt{\frac{1-\rho}{\rho}}}}} \asymp \sum_{j=1}^{\infty} \paren{\frac{t^6_n}{n}}^{j/2} \asymp\sqrt{\frac{t^6_n}{n}},\quad\forall\;u\in\cc{B}_{\epsilon,\rho}.
\end{equation}
Therefore,
\begin{eqnarray*}
\abs{p_0} &\leq& \int_{\cc{B}_{\epsilon,\rho}} \phi\paren{u}\abs{\frac{\Delta\paren{\frac{t^-_n}{\sqrt{\rho}}-u\sqrt{\frac{1-\rho}{\rho}}}}{\bb{P}\paren{Z\geq t^-_n}}} du	\lesssim \sqrt{\frac{t^6_n}{n}} \int_{\cc{B}_{\epsilon,\rho}} \phi\paren{u} \frac{\bb{P}\paren{Z\geq \frac{t^-_n}{\sqrt{\rho}}-u\sqrt{\frac{1-\rho}{\rho}}}}{\bb{P}\paren{Z\geq t^-_n}} du \\
&\lesssim& \sqrt{\frac{t^6_n}{n}} \int_{\bb{R}} \phi\paren{u} \frac{\bb{P}\paren{Z\geq \frac{t^-_n}{\sqrt{\rho}}-u\sqrt{\frac{1-\rho}{\rho}}}}{\bb{P}\paren{Z\geq t^-_n}} du.
\end{eqnarray*}
We proved in Eq. \eqref{Eq4prime} that $\bb{P}\paren{Z\geq t^-_n} = \int_{\bb{R}} \phi\paren{u}\bb{P}\paren{Z\geq \frac{t^-_n}{\sqrt{\rho}}-u\sqrt{\frac{1-\rho}{\rho}}} du$. Thus,
\begin{equation*}
\abs{p_0} 
\lesssim \sqrt{\frac{t^6_n}{n}} \int_{\bb{R}} \phi\paren{u} \frac{\bb{P}\paren{Z\geq \frac{t^-_n}{\sqrt{\rho}}-u\sqrt{\frac{1-\rho}{\rho}}}}{\bb{P}\paren{Z\geq t^-_n}} du = \sqrt{\frac{t^6_n}{n}} \rightarrow 0.
\end{equation*}
In summary we showed that both $p_0$ and $p_1$ tend to $0$ as $n\rightarrow\infty$, which concludes the proof of our claim in Eq. \eqref{Eq5}.
\end{proof}

\begin{cor}\label{GaussInnerProdUppBnd}
Under the same notation and conditions as in Theorem \ref{MainThmApp}, we have
\begin{equation*}
\bb{P}\paren{\abs{V_n\paren{r}} \geq t_n} \sim \bar{\Phi}\paren{t_n}\paren{1+o\paren{1}} \sim  \frac{1}{t_n\sqrt{2\pi}}\exp\paren{-\frac{t^2_n}{2}},\quad \mbox{as}\;n\rightarrow\infty.
\end{equation*}
\end{cor}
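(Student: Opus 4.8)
The plan is to bootstrap the two-sided estimate from the one-sided estimate already proved in Theorem \ref{MainThmApp}, using an elementary reflection for the left tail and the classical Mills-ratio asymptotic for the Gaussian tail. Since $t_n\to\infty$, for all large $n$ the events $\set{V_n\paren{r}\geq t_n}$ and $\set{V_n\paren{r}\leq -t_n}$ are disjoint, so $\bb{P}\paren{\abs{V_n\paren{r}}\geq t_n}=\bb{P}\paren{V_n\paren{r}\geq t_n}+\bb{P}\paren{V_n\paren{r}\leq -t_n}$, and Theorem \ref{MainThmApp} gives $\bb{P}\paren{V_n\paren{r}\geq t_n}\sim\bar{\Phi}\paren{t_n}$.

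For the left tail I would use the sign flip $\boldsymbol{Y}\mapsto-\boldsymbol{Y}$: if $\paren{\boldsymbol{X},\boldsymbol{Y}}$ are standard Gaussian vectors with coordinatewise correlation $r$, then $\paren{\boldsymbol{X},-\boldsymbol{Y}}$ are standard Gaussian with correlation $-r$, and reading off \eqref{Vn} shows that the statistic $V_n\paren{r}$ formed from $\paren{\boldsymbol{X},\boldsymbol{Y}}$ equals the negative of the statistic $V_n\paren{-r}$ formed from $\paren{\boldsymbol{X},-\boldsymbol{Y}}$; hence $-V_n\paren{r}\eqd V_n\paren{-r}$ and $\bb{P}\paren{V_n\paren{r}\leq -t_n}=\bb{P}\paren{V_n\paren{-r}\geq t_n}$. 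Since $-r\in\paren{-1,1}$ and the sequence $\set{t_n}$ still obeys the growth restriction \eqref{Cond}, Theorem \ref{MainThmApp} applies again and yields $\bb{P}\paren{V_n\paren{-r}\geq t_n}\sim\bar{\Phi}\paren{t_n}$. Summing the two contributions shows $\bb{P}\paren{\abs{V_n\paren{r}}\geq t_n}\sim\bb{P}\paren{\abs{Z}\geq t_n}$, and the classical tail estimate $\bar{\Phi}\paren{t}\sim\phi\paren{t}/t=\paren{t\sqrt{2\pi}}^{-1}\exp\paren{-t^2/2}$ as $t\to\infty$ (the Mills-ratio asymptotic used, e.g. via \cite{borjesson1979simple}, in the proof of Theorem \ref{MainThmApp}) then produces the stated closed-form expression.

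Essentially nothing in this argument is hard, since the analytic work all sits inside Theorem \ref{MainThmApp}; the only point needing a word of care is the boundary case $\abs{r}=1$, which the reflection step does not cover directly because Theorem \ref{MainThmApp} controls only the upper tail while $V_n\paren{1}=\paren{\LpNorm{\boldsymbol{X}}{2}^2-n}/\sqrt{2n}$ and $V_n\paren{-1}=-V_n\paren{1}$, so the left tail of $V_n\paren{1}$ is the lower $\chi^2_n$ tail $\bb{P}\paren{\LpNorm{\boldsymbol{X}}{2}^2\leq n-t_n\sqrt{2n}}$. For this I would invoke the $z<a$ branch of the incomplete-gamma asymptotics of \cite{nemes2019asymptotic}, which under $t_n^6/n\to0$ again gives this tail $\sim\bar{\Phi}\paren{t_n}$, mirroring the $Q\paren{\cdot,\cdot}$ computation in the first part of the proof of Theorem \ref{MainThmApp}. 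In every use of the corollary in the paper, however, Assumption \ref{AssuCLT} forces $\abs{r}<1$ strictly (indeed $\vartheta_w=V_w\paren{0}$, and each $\boldsymbol{E}_{rs}$ corresponds to a correlation bounded by $r_{\max}<1$), so the reflection route above already suffices.
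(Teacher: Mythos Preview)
The paper gives no proof of this corollary, leaving it as immediate from Theorem~\ref{MainThmApp}; your route---split into the two tails, handle the lower one via the reflection $-V_n\paren{r}\eqd V_n\paren{-r}$ and a second appeal to Theorem~\ref{MainThmApp}, then invoke the Mills-ratio asymptotic---is precisely the intended (and correct) argument. Note that your computation actually yields $\bb{P}\paren{\abs{V_n\paren{r}}\geq t_n}\sim 2\bar{\Phi}\paren{t_n}$, which is exactly how the result is applied elsewhere in the paper (cf.\ Eq.~\eqref{Eq0Thm1} in the proof of Theorem~\ref{Thm1} and the remark on the asymptotic behaviour of $\zeta_{\pi_0,p,w}$); the missing factor of~$2$ in the corollary's display appears to be a typo.
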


\end{appendix}

\section*{Acknowledgements}
The authors would like to thank Professor Yves Atchad\'{e} for his constructive comments that improved the
quality of this paper.

The second author is partially supported by NSF grants DMS-1545277, DMS-1632730 and NIH grant 1R01-GM1140201A1.

\bibliographystyle{abbrv}
\bibliography{Ref_Final}

\end{document}